\documentclass[11pt]{amsart}

\usepackage[utf8]{inputenc}
\usepackage[english]{babel} 

\usepackage{latexsym,amssymb, epsfig, amsmath,amsfonts, amsthm}
\usepackage{color}
\usepackage{multirow}
\usepackage{relsize}
\usepackage{microtype}
\usepackage{dsfont}
\usepackage{wasysym}
\usepackage{cases}
\usepackage{tikz}
\usepackage[colorlinks, allcolors=blue]{hyperref}
\usepackage{enumitem}
\usepackage[margin=1in]{geometry}

\numberwithin{equation}{section}
\newtheorem{theorem}{Theorem}[section]
\newtheorem{lemma}[theorem]{Lemma}

\newtheorem{coro}[theorem]{Corollary}
\newtheorem{prop}[theorem]{Proposition}
\newtheorem{remark}[theorem]{Remark}

\newtheorem{example}{Example}[section]

\newtheorem{assumption}{Assumption}

\def\E{\mathbb{E}}
\def\P{\mathbb{P}}

\newcommand{\ep}{\epsilon}

\newcommand{\fF}{{\mathfrak F}}
\newcommand{\fS}{{\mathfrak S} }


\newcommand{\dC}{\mathbb{C}}
\newcommand{\dD}{\mathbb{D}}
\newcommand{\dE}{\mathbb{E}}

\newcommand{\dP}{\mathbb{P}}  
\newcommand{\dR}{\mathbb{R}}
\newcommand{\dX}{\mathbb{X}}

\newcommand{\re}{\mathrm{e}}
\newcommand{\rd}{\mathrm{d}}


\newcommand{\cA}{\mathcal{A}}
\newcommand{\cB}{\mathcal{B}}
\newcommand{\cC}{\mathcal{C}}
\newcommand{\cF}{\mathcal{F}}
\newcommand{\cG}{\mathcal{G}}
\newcommand{\cH}{\mathcal{H}}
\newcommand{\cK}{\mathcal{K}}

\newcommand{\cP}{\mathcal{P}}
\newcommand{\cR}{\mathcal{R}}

\newcommand{\cZ}{\mathcal{Z}}

\newcommand{\wt}{\widetilde}

\newcommand{\R}  {\mathbb{R}}

\newcommand{\baa}{\begin{eqnarray*}}
\newcommand{\eaa}{\end{eqnarray*}}

\newcommand{\indic}[1]{\mathds{1}_{#1}}

\newcommand{\ABS}[1]{{{\left| #1 \right|}}} 
\newcommand{\BRA}[1]{{{\left\{#1\right\}}}} 
\newcommand{\SCA}[1]{{{\left<#1\right>}}} 
\newcommand{\NRM}[1]{{{\left\| #1\right\|}}} 
\newcommand{\PAR}[1]{{{\left(#1\right)}}} 
\newcommand{\SBRA}[1]{{{\left[#1\right]}}} 
\newcommand{\TV}[1]{{{\| #1\|}_{\mbox{{\scriptsize TV}}}}} 
\renewcommand{\leq}{\leqslant}
\renewcommand{\geq}{\geqslant}

\newcommand{\ind}{\mathds{1}}
\providecommand{\keywords}[1]{\textbf{Keyswords: } #1}

\parindent 0cm

\begin{document}

\title[Stochastic epidemic model with memory]{
A stochastic epidemic model with memory of the last infection and waning immunity}

\author[H. Guérin]{Hélène Guérin}
\thanks{HG acknowledges funding from the Natural Sciences and Engineering Research Council of Canada (NSERC) for its Discovery Grant (RGPIN-2020-07239)}
\address{Département de mathématiques, 
Université du Québec à Montréal}
\email{guerin.helene@uqam.ca}

    \author[A.B. Zotsa--Ngoufack]{Arsene Brice Zotsa--Ngoufack} 
\thanks{ABZN acknowledges funding from the SCOR Foundation for Science, and the FRQNT-CRM-CNRS support.}
\address{Département de mathématiques, 
Université du Québec à Montréal}
\email{zotsa\_ngoufack.arsene\_brice@uqam.ca}

\date{\today}
\subjclass[2020]{60F17; 35Q92; 60K35; 35B40; 92D30}
\keywords{Stochastic epidemic model with memory; age-structured model; varying infectivity; varying immunity/susceptibility; endemicity; local stability.}
  \begin{abstract}
We adapt the article of Forien, Pang, Pardoux and Zotsa \cite{forien-Zotsa2022stochastic}, on epidemic models with varying infectivity and waning immunity, to incorporate the memory of the last infection. 
 To this end, we introduce a parametric approach and consider a piecewise deterministic Markov process modeling both the evolution of the parameter, also called the trait, and the age of infection of individuals over time.
 At each new infection, a new trait is randomly chosen for the infected individual according to a Markov kernel, and their age is reset to zero.
In the large population limit, we derive a partial differential equation (PDE) that describes the density of traits and ages. The main goal is to study the conditions under which endemic equilibria exist for the deterministic PDE model and to establish an endemicity threshold that depends on the model parameters. The local stability of these equilibria is also analyzed. The endemicity threshold is computed for several examples, including models that incorporate a vaccination policy, and a local stability result is obtained for a memory-free SIS-type model.
 \end{abstract}

\maketitle

\section{Introduction}

We introduce a new approach to stochastic modeling in epidemiology, focusing on diseases whose infections do not confer full immunity. After an individual is first infected by a pathogen, T lymphocytes store information about the pathogen. Consequently, upon reinfection by the same pathogen, there is an immune response \cite{chaplin2010overview,chowdhury2020immune}. 
The model studied here is inspired by the stochastic epidemic model of type-SIS of
Forien, Pang, Pardoux and Zotsa-Ngoufack in \cite{forien-Zotsa2022stochastic} (see also \cite{ngoufack2024stochastic}). They consider a stochastic model that takes into account a varying infectivity and a waning immunity.  To this aim they introduce the susceptibility of an individual as the probability to be infected after a contact with an infected person.  However, they assume that at each new infection the new  infectivity and susceptibility does not depend on previous infections. In the present work, we adapt their individual-based epidemic model to take into account the fact that at each new infection the new infectivity and susceptiblity curves can depend on their last values. Besides, the model study here allows us to take into account a vaccination policy, which was not cover in the study of endemicity in \cite{forien-Zotsa2022stochastic}. 
\\
A disease is called endemic if it persists in a population over a long period of time. For public health purposes, it is of great importance to understand the mechanism that induces the existence of endemic states, and this question has been studied a lot in the literature \cite{britton2018stochastic,inaba_kermack_2001,inaba_variable_2017,magal2018theory,perthame2006transport,ThieYang,webb1985theory}. Vaccinating the population is one way to prevent the disease from becoming endemic. However, as we have seen with the COVID-19 crisis, vaccines do not confer total immunity for some diseases, but only slow the disease's spread.
The effect of a vaccination policy on the long time evolution of the endemicity for a similar model is in particular studied by Foutel-Rodier, Charpentier and Guérin \cite{foutel2023optimal}. The authors assume that at each event (infection, recovery (seen as the end of infectiousness), or vaccination) new curves of infection and susceptibility are randomly chosen independently of the past. Consequently, there is no dependence in their model between the individual's level of infectivity following an infection and the resulting level of immunity. They introduce a vaccination policy as a renewal process for susceptible individuals in the population. They obtain an explicit threshold in the large limit population depending on the susceptibility distribution of a typical individual in the population and the generic distribution between two vaccination times, such that when the basic reproduction number $R_0$ is greater than this threshold, there exists an endemic equilibrium. We recall that the reproduction number $R_0$ is frequently defined as the average number of infections produced by an infected individual in a population
completely vulnerable to the disease. Note that El Khalifi and Britton  also study in \cite{khalifi2022extending} the vaccination effect in a model with vaccination at a fixed time after recovery. We also mention that Heffernan and Keeling \cite{heffernan2009implications}  consider a deterministic epidemic model that captures the within-host dynamics of the pathogen and immune system, as well as the associated population-level transmission dynamics. They show, in the case of measles, how vaccination can have a range of unexpected consequences as it reduces the natural boosting of immunity and decreases the number of naive susceptibles. In fact, the immune response helps the body to react rapidly against a virus that has infected it in the past \cite{gray1993immunological}.
In the present work, the susceptibility of an individual may depend or not on the infectivity induced by the infection, and therefore generalized the model of \cite{foutel2023optimal}. However, contrary of \cite{khalifi2022extending,foutel2023optimal,heffernan2009implications}, we won't study here in details the effect of a vaccination policy on the long time behavior of the disease, we will only focus on the existence of an endemic equilibrium.

\medskip
The model studied in this article is a sort of \emph{parametric model} based on \cite{forien-Zotsa2022stochastic}. We introduce a probability space $\PAR{\Theta,\cH,\nu}$, with $\Theta\subset \dR^d$ and $d\geq 1$.  We define the  \emph{age} of an individual as the duration since its last infection.
We consider a family of deterministic non-negative functions $(\lambda(\cdot,\theta), \gamma(\cdot,\theta))_{\theta\in\Theta}$ defined on $\mathbb{R}^+$, where $\lambda(a,\theta)$ and $\gamma(a,\theta)$ respectively  model the infectivity and the susceptibility at age $a$ of an individual with parameter $\theta$. Infectivity represents the virulence of an infection, i.e., the force of infection of an infected individual. Susceptibility represents the probability of being reinfected after a contact with an infected individual: the lower the susceptibility, the stronger the individual's immunity to the disease. 
We assume that the population is homogeneous, that all individuals behave in the same way to the disease, and that interactions between individuals are uniform within the population. Consequently, we do not consider cases where the disease does not affect all individuals in the same way. Demographic effects (birth and death of individuals) are neglected in this work. We also do not take into account the evolution of pathogens over time, such as mutations. The parameter $\theta\in\Theta$, also called the \emph{trait}, is therefore not a way to model the heterogeneity of the population, but only a way to describe the stochasticity of the disease. Finally, we assume that the epidemic has spread long enough for all individuals to have been infected at least once at $t = 0$.
\\

We introduce the measured space $(\dR_+\times\Theta, \cB(\dR_+)\otimes\cH, \rd a\otimes\nu)$, where $\rd a$ is the Lebesgue measure on $\dR_+$ and $\cB(\dR_+)$ is the borel $\sigma$-algebra on $\dR_+$.
Throughout the article, we assume that the infectivity $\lambda$ and the susceptibility $\gamma$ are
non-negative measurable functions on $\dR_+\times \Theta$ such that
\begin{itemize}[label={\small \textbullet}]
	\item  $\lambda$ is bounded by a constant $\lambda_*>0$;
\item $\gamma$ is  bounded by $1$.
\end{itemize}
Assume that $\gamma$ is bounded by $1$ is natural since it models the probability of being infected after a contact with an infected individual. The assumption on the infectivity curves has also been used in \cite{forien-Zotsa2022stochastic} to move from the individual-based model to a system of partial differential equations when the size of the population goes to infinity, as we will do here.

As in \cite{forien-Zotsa2022stochastic}, we allow the susceptibility to depend on the infectivity. Each time an individual is infected, a new value of the parameter $\theta$ is randomly chosen for that individual and their age drops to zero. The infectivity and susceptibility curves are therefore deterministic between two infections. In the present work, the choice of the new value of $\theta$ can depend on its previous value to keep the memory of the last infection. Depending on the epidemiological model, we could, for example, imagine that if an individual had a serious infection, the next could be less severe.
Consequently, the model studied in this paper belongs to the class of age and trait structured model. There is a wide literature on this type of model (see, e.g. \cite{crump1969general,crump1968general,hamza2013age,meleard2009trait,oelschlager1990limit,tran2006modeles}). We also mention that the model studied in \cite{meleard2009trait,tran2006modeles} is quite close to our model, but contrary to our case, the birth rate is independent of the state of other individuals and the birth process is independent of the death process. 

\medskip
Let us consider a population of size $N$. 
For $k\in\{1,\ldots, N\}$, we denote by $(a_k^N(t))_{t\geq 0}$ the age process and by $\theta_k^N(t)$ the parameter of the $k$-th individual at time $t$. Individuals interact through the force of infection in the population, denoted by $\fF^N$. This system is difficult to study for any finite $N$. However, for each $t\geq 0$ and for i.i.d initial values, when the size of the population goes to infinity, we prove that the empirical distribution of $(a_k^N(t), \theta_k^N(t))_{1\leq k\leq N}$ converges weakly to the solution of a nonlinear partial differential equation, similar to the one obtained by Kermack and McKendrick in \cite{kermack_contribution_1927,kermack_contributions_1932,kermack_contributions_1933} but without demographic effect. 
Then we focus on the long time behaviour of this (deterministic) nonlinear equation. More precisely, under appropriate assumptions, we show that this equation admits a non-zero stationary solution and that there is therefore an endemic equilibrium. We obtain a very general endemicity threshold that we can compute for some examples in Section~\ref{sec-proofs-endemic-v}, including examples taking into account a vaccination policy in Section~\ref{sec:vaccin}.  We also study in Section~\ref{sec:local stability} the local stability of endemic equilibria. Unfortunately, it was not possible to obtain a complete proof for the model with memory, and the question remains open. Even for a model without memory, the study of the stability of equilibria is difficult to carry out because the usual techniques could not be applied. We could not conclude using Doeblin's argument, as was done in \cite{gabriel2018measure} for a conservative renewal equation in population dynamics, and in \cite{pakdaman2013relaxation,torres2022multiple} in the context of neuron populations, in particular because, for epidemiological models, we cannot assume that susceptibility and infectivity curves are bounded by below by a positive value. We use the tools of abstract semi-linear Cauchy problems (see \cite{magal2018theory,thieme1990semiflows,webb1985theory}) to obtain the local stability in a memory-free framework under semi-explicit
assumptions on infectivity and susceptibility curves. 
This is the only result of stability, to our knowledge, that goes this far.

\medskip

\paragraph{\bf Notations}
For a measured space $\PAR{E,\cG,\mu}$,  $L^1(\mu)$ is the set of integrable functions with respect to the measure $\mu$, and more generally $L^p(\mu)$ with $p\in[1,\infty]$ is the Lebesgue space with respect to the measure $\mu$. For any measurable function $f$, non-negative or in $L^1(\mu)$, we denote $\SCA{\mu,f}=\int f\rd \mu$. The norm $\NRM{\cdot}_\infty$ is the classical uniform norm, and $\mathrm{ess}\sup_{\Theta}$ is the essential supremum on $\PAR{\Theta,\cH,\nu}$. For a non-negative or integrable with respect to $\nu$ measurable function $f$ defined on $\PAR{\dR_+\times\Theta,\cB(\dR_+)\otimes\cH}$, we define $\dE_\nu\SBRA{f(a)}=\int_\Theta f(a,\theta)\nu(\rd\theta)$ for any $a\in\dR_+$. For a function $f$ defined on $\dR_+$, we denote by $\mathrm{Supp}(f)$ its support.  We denote by $\dD(\dR_+,\cP(\dR_+\times\Theta))$ the Skorohod space of càdlàg functions on $\dR_+$ with values in the space of probability measures on $\PAR{\dR_+\times\Theta,\cB(\dR_+)\otimes\cH}$.
Finally, $\cR e(\alpha)$ is the real part of a complex number $\alpha\in\dC$.

\subsection*{Organization of the article} The rest of the article is organized as follows. In Section \ref{sec-model-v}, we introduce the model. In Section \ref{sec:main-results}, we present the main results on the functional law of large numbers (FLLN) and on the long time behaviour of the disease. The proof of the FLLN is given in Section \ref{VVM-sec-FLLN} and the study of the existence of endemic equilibria and its local stability is presented in Section \ref{sec-proofs-endemic-v}. In Section \ref{sec:vaccin},  we study the existence of endemic equilibria for two explicit models taking into account a vaccination policy, including the one studied in \cite{foutel2023optimal}. 

\section{A stochastic parametric model with memory}\label{sec-model-v}
Let $\PAR{\Omega, \cF, \dP}$ be a probability space and $\dE\SBRA{\cdot}$ denote the expectation with respect to $\dP$. We consider a population of finite size $N$. 
For $k\in\{1,\ldots, N\}$, we denote by $(a_k^N(t))_{t\geq 0}$ the age process and by $\theta_k^N(t)$ the trait of the $k$-th individual at time $t$. We assume that $(a_0^k,\theta_0^k)_{1\leq k\leq N}$ are i.i.d random variables on $\PAR{\Omega, \cF, \dP}$ with distribution $\mu_0$ on $\dR_+\times\Theta$ modeling the initial age and parameter of each individual.

Each time an individual is infected, its age jumps to $0$ and a new parameter is randomly chosen. The ages and parameters of the other individuals are not affected. Between two infections, the ages of all the individuals in the population increase linearly and their parameters remain constant. 
Let us introduce $N$ independent Poisson random measures $\PAR{Q_k}_{1\leq k\leq N}$ on $\mathbb{R}_+\times\Theta\times\mathbb{R}_+$ with intensity $\rd z\nu(\rd \theta)\rd t$. We also consider a memory kernel $K:\Theta\times\Theta\to\R_+$ that satisfies the following assumption.
\begin{assumption}\label{Hyp:Kernel}
$K:\Theta\times\Theta\to \dR_+$ is a  measurable function  such that for any $\theta\in\Theta\subset\mathbb{R}^d,$
	\begin{equation}
	\int_\Theta K(\theta,\widetilde\theta)\nu(\rd \widetilde\theta)=1.
	\end{equation}
\end{assumption}

The family  $(a^N_k,\theta^N_k)_{1\leq k\leq N}$ is then seen as the solution to the following system of stochastic differential equations:
\begin{align}\label{eq:SDE}
\begin{cases}
	a^N_k(t)&=\displaystyle{a_0^k+t-\int_{0}^{t}\int_{\Theta}\int_{0}^{\infty}a^N_k(s^-)\mathds{1}_{\fF^N(s^-)\gamma^N_k(s^-)K(\theta^N_k(s^-),\widetilde\theta)\geq z}Q_k\PAR{\rd z,\rd\widetilde\theta,\rd s}}\\[0.3cm]
	\theta^N_k(t)&=\displaystyle{\theta_0^k+\int_{0}^{t}\int_{\Theta}\int_{0}^{\infty}\left(\widetilde\theta-\theta^N_k(s^-)\right)\mathds{1}_{\fF^N(s^-)\gamma^N_k(s^-)K(\theta^N_k(s^-),\widetilde\theta)\geq z}Q_k\PAR{\rd z,\rd\widetilde\theta,\rd s}}\\[0.3cm]
	\gamma^N_k(t)&=\gamma(a^N_k(t),\theta^N_k(t)),
	\end{cases}
\end{align}
where the force of infection in the population is given by
\begin{equation}
	\fF^N(t)=\frac{1}{N}\sum_{k=1}^{N}\lambda\PAR{a^N_k(t),\theta^N_k(t)}.
\end{equation}

We introduce the empirical measure $\mu ^N_t$ of ages and traits at time $t\geq 0$, defined by
\begin{equation}\label{H-mu}
\mu ^N_t=\frac{1}{N}\sum_{k=1}^{N}\delta_{(a^N_k(t),\theta^N_k(t))}.
\end{equation}
Note that $\fF^N(t)=\langle\mu ^N_t,\lambda\rangle$.
We observe that individuals are in interaction through the force of infection of the disease in the population. 
In fact, the individual $k$ gets reinfected at time $t$ at a rate $\fF^N(t)\gamma^N_k(t)$ and if it occurs, their age jumps to $0$ and he is assigned a new parameter according to the distribution $K( \theta^N_k(t^-),\widetilde\theta)\nu(\rd \widetilde\theta)$.
The processes $(a_k^N,\theta_k^N)_{1\leq k\leq N}$ are constructed by induction on the jump times. The upper bound conditions on the curves $\lambda$ and $\gamma$ ensures that the rate of new infections, $\fF^N(t)\gamma_k^N(t)$, is bounded almost surely
by the constant $\lambda_\ast$, and therefore the jump times cannot accumulate.
The family $(a_k,\theta_k)_{1\leq k\leq N}$ is a system of interacting piecewise deterministic Markov processes on the Skorohod space $\dD(\dR_+, \dR_+\times \Theta)$. 
This stochastic system is well defined and has a unique solution under Assumption \ref{Hyp:Kernel} (see, e.g. \cite{davis1984}).

\section{Main results}\label{sec:main-results}

The long-term behavior of the system \eqref{eq:SDE} is difficult to study  due to the interactions between individuals. However, for each $t\geq 0$ and for i.i.d. initial values, when the size of the population goes to infinity, we prove that the empirical distribution $\mu ^N_t$ of ages and traits, defined in \eqref{H-mu}, converges weakly to the solution of a nonlinear equation. The system being exchangeable, there is propagation of chaos. 
\begin{theorem}\label{VVM-th}
Let $\lambda$ and $\gamma$ be non-negative measurable bounded functions respectively by $\lambda_*$ and $1$. Under Assumption~\ref{Hyp:Kernel},  as $N\to\infty,\,\mu ^N$ converges in law to a measure $\mu \in\dD(\R_+;\mathcal{P}(\R_+\times\Theta))$, which is the unique solution to 
	\begin{equation}\label{VVM-eq1}
	\langle\mu _t,f_t\rangle=\langle\mu _0,f_0\rangle + \int_0^t \langle\mu _s,\partial_af_s+\partial_sf_s\rangle  \rd s+\int_{0}^{t}\langle\mu _{s},\lambda\rangle\langle\mu _{s},Rf_{s}\rangle\rd s,
	\end{equation}
	for any measurable bounded function $f$ on $\R_+\times\R_+\times\Theta$, of class $\cC^1$ with respect to its first two variables, where the operator $R$ is given by
		\begin{equation}\label{eq:Rjump-term}
	Rf(a,\theta)=\int_{\Theta}\left(f(0,\widetilde\theta)-f(a,\theta)\right)\gamma( a,\theta)K(\theta,\widetilde\theta)\nu(\rd\widetilde\theta).
	\end{equation}
\end{theorem}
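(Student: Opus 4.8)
The plan is to follow the standard route for mean-field limits of interacting jump processes: derive a closed semimartingale equation for the scalar observables $\langle\mu^N_t,f_t\rangle$, show that the martingale fluctuations vanish, prove tightness of $(\mu^N)_N$ in $\dD(\R_+;\mathcal{P}(\R_+\times\Theta))$, identify every limit point as a solution of \eqref{VVM-eq1}, and finally establish uniqueness for \eqref{VVM-eq1} so that the whole sequence converges; propagation of chaos for the exchangeable system then follows from the convergence of the empirical measure to a deterministic limit. The first step is to apply the Itô formula for the Poisson random measures $Q_k$ to $t\mapsto\frac1N\sum_{k}f_t(a^N_k(t),\theta^N_k(t))$: between jumps each summand evolves by $\tfrac{\rd}{\rd t}f_t(a^N_k(t),\theta^N_k(t))=(\partial_a f_t+\partial_s f_t)(a^N_k(t),\theta^N_k(t))$, while a jump of the $k$-th coordinate at $(z,\widetilde\theta,s)$ with $z\le\fF^N(s^-)\gamma^N_k(s^-)K(\theta^N_k(s^-),\widetilde\theta)$ replaces that summand by $f_s(0,\widetilde\theta)$; this produces exactly \eqref{VVM-eq1} with $\mu$ replaced by $\mu^N$, up to a martingale $M^{N,f}$ whose predictable quadratic variation is $\tfrac1N\int_0^t\langle\mu^N_s,\lambda\rangle\,\big\langle\mu^N_s,\gamma(a,\theta)\!\int_\Theta(f_s(0,\widetilde\theta)-f_s(a,\theta))^2K(\theta,\widetilde\theta)\nu(\rd\widetilde\theta)\big\rangle\,\rd s\le 4\lambda_*\|f\|_\infty^2\,t/N$, using $\lambda\le\lambda_*$, $\gamma\le1$ and $\int_\Theta K(\theta,\cdot)\,\rd\nu=1$. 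Hence $\sup_{s\le t}|M^{N,f}_s|\to0$ in $L^2$ by Doob's inequality.

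For tightness, the compact containment is easy and robust: since $a^N_k(t)\le a_0^k+t$ almost surely, for $t\le T$ one has $\mu^N_t(\{a>A\})\le\frac1N\#\{k:a_0^k>A-T\}$, whose almost sure limit $m_0((A-T,\infty))$ ($m_0$ being the age-marginal of $\mu_0$) is small for $A$ large; together with tightness of $\nu$ and of $\mu_0$, this makes $(\mu^N_t)_{t\le T,\,N}$ tight in $\mathcal{P}(\R_+\times\Theta)$ with uniformly high probability. Tightness in the Skorohod space then follows, via Roelly's criterion for measure-valued càdlàg processes, from tightness of $(\langle\mu^N_\cdot,f\rangle)_N$ in $\dD(\R_+;\R)$ for $f$ in a countable convergence-determining family of bounded continuous functions (say $\cC^1$ in $(a,s)$); the latter is obtained by Aldous' criterion applied to the decomposition above, whose drift increments are controlled by $\lambda_*$, $\|\partial f\|_\infty$ and $\|f\|_\infty$, the martingale part being negligible by the previous paragraph.

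Passing to a subsequential limit $\mu$, the initial term and the two drift integrals in the decomposition pass to the limit because $\mu^N\to\mu$ in $\dD$ and the integrands are bounded continuous, and $M^{N,f}\to0$. \emph{The genuine obstacle is the quadratic interaction term} $\int_0^t\langle\mu^N_s,\lambda\rangle\langle\mu^N_s,Rf_s\rangle\,\rd s$: being a product of two functionals of $\mu^N_s$ is harmless (joint convergence along a further subsequence), but it pairs $\mu^N_s$ against the merely measurable functions $\lambda$, $\gamma$, $K(\theta,\cdot)$, against which weak convergence does not pass directly. I would handle this by first proving that any limit $\mu_t$ is absolutely continuous with respect to $\rd a\otimes\nu$ — the transport flow preserves this, and the jumps only inject mass on $\{a=0\}$ via the kernel $\gamma K$, absolutely continuous in $\theta$ — and then combining this with an approximation of $\lambda,\gamma,K$ by bounded continuous functions, bounding the errors through $\lambda\le\lambda_*$, $\gamma\le1$ and the $L^1$-control furnished by absolute continuity. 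Once \eqref{VVM-eq1} holds for bounded continuous $f$ that are $\cC^1$ in $(a,s)$, a monotone-class argument extends it to all bounded measurable $f$ of the stated form.

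For uniqueness of \eqref{VVM-eq1}: given two solutions $\mu^{(1)},\mu^{(2)}$ with the same initial datum, set $\fF^{(i)}_t=\langle\mu^{(i)}_t,\lambda\rangle$ and represent each $\mu^{(i)}$ by the Duhamel/mild form of \eqref{VVM-eq1} — transport along ages with killing rate $\fF^{(i)}_s\gamma$ and reinjection at $a=0$ with rate $\fF^{(i)}_s\gamma K$ — which gives in total variation $\|\mu^{(1)}_t-\mu^{(2)}_t\|\le C\int_0^t|\fF^{(1)}_s-\fF^{(2)}_s|\,\rd s$, while $|\fF^{(1)}_t-\fF^{(2)}_t|\le\lambda_*\|\mu^{(1)}_t-\mu^{(2)}_t\|$; Gronwall's lemma then forces $\mu^{(1)}=\mu^{(2)}$. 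Uniqueness upgrades the subsequential convergence to convergence in law of the full sequence $\mu^N$ to the deterministic limit $\mu$, which for the exchangeable system is equivalent to propagation of chaos. I expect the identification step to be the hard part, precisely because the coefficients are only assumed measurable; all the remaining ingredients are by now standard.
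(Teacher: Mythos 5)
Your proposal follows essentially the same route as the paper: derive the semimartingale decomposition for $\langle\mu^N_t,f_t\rangle$ from the Poisson SDEs, show the martingale fluctuation vanishes at rate $1/N$, establish $\cC$-tightness via a criterion on the real-valued projections (the paper uses \cite[Lemma 3.1]{pang2022-CLT-functional} plus \cite{roelly-coppolettaCriterionConvergenceMeasurevalued1986}; you invoke compact containment plus Aldous plus Roelly -- equivalent in practice), identify subsequential limits with \eqref{VVM-eq1}, and conclude by a Gronwall uniqueness argument in total variation. Minor technical differences (Doob vs.\ Burkholder--Davis--Gundy for the martingale bound; mild/Duhamel vs.\ transport test functions for uniqueness) are cosmetic; both work. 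The one place where you genuinely depart from the paper's written proof is the identification step, which the paper dispatches in a single sentence ("We deduce from \eqref{VV-eq-m} that the limit of any converging subsequence satisfies \eqref{VVM-eq1}''). You are right to flag this: weak convergence of $\mu^N_s$ does not, by itself, yield $\langle\mu^N_s,\lambda\rangle\to\langle\mu_s,\lambda\rangle$ when $\lambda$, $\gamma$, $K$ and the test function $f$ are merely bounded measurable in $\theta$, and the paper does not say how it handles this.

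That said, your proposed fix has a real gap as stated. You want to prove the limit $\mu_t$ is absolutely continuous with respect to $\rd a\otimes\nu$ and then approximate $\lambda,\gamma,K$ in $L^1$. But the regularization mechanism you cite (the jump kernel $\gamma K$ smoothing in $\theta$, transport smoothing in $a$) only produces absolute continuity of the \emph{newly injected} mass; the part of $\mu_t$ transported from $\mu_0$ remains $(\cdot-t)_*\mu_0$ in the age variable, which is not absolutely continuous unless $\mu_0$ is. Theorem~\ref{VVM-th} does not assume absolute continuity of $\mu_0$ (that hypothesis only enters in Proposition~\ref{AP-density}). Moreover, even granting a favorable $\mu_0$, you would be inferring a property of the limit from the equation \eqref{VVM-eq1} that you are still in the process of establishing -- the argument as written is circular. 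A non-circular way to get the needed control would be to bound $\mu^N_t$ itself, e.g.\ showing $\mathbb{E}[\mu^N_t((0,\varepsilon)\times\Theta)]\le\lambda_*\varepsilon+\mu^N_0([0,\varepsilon))$ uniformly in $N$ from the bounded jump rate, and then pass this to the limit; or to avoid the issue entirely by a Sznitman-type coupling of each particle with a copy of the nonlinear limit process driven by the same Poisson measures, which yields convergence of $\mu^N$ in a topology strong enough to test against bounded measurable functions (this is the route implicitly suggested by the paper's remark recasting $\mu$ as the law of a nonlinear SDE). Either repair is feasible, but the one you wrote down is not yet a proof.
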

This theorem is proved in Section~\ref{VVM-sec-FLLN}. 
When $\mu_0$ has a density $u_0$ with respect to the measure $\rd a\nu(\rd\theta)$ on $\dR_+\times \Theta$, the weak solution $\mu_t$ of \eqref{VVM-eq1} also admits a density, denoted by $u_t$ (see Proposition~\ref{AP-density} in Appendix~\ref{A:abs-conti}). We easily deduce that $(u_t)_{t\geq 0}$ is then a weak solution to the following partial differential equation (PDE): $\forall(a,\theta)\in\R_+\times\Theta$, 
	\begin{numcases}{}
	\partial_t u_t(a,\theta)+\partial_a u_t(a,\theta)=-\fF(t)\gamma(a,\theta)u_t(a,\theta)\nonumber\\[0.3cm]
	u(t,0,\theta)=\displaystyle{\fF(t)\int_{\R_+\times\Theta}\gamma(a,\widetilde\theta)K(\widetilde\theta,\theta)u_t(a,\widetilde\theta)\rd a\nu(\rd\widetilde{\theta})}\label{VVL-eq-1}\\[0.3cm]
	u(0,a,\theta)=u_0(a,\theta)\nonumber\\[0.3cm]
	\fF(t)=\displaystyle{\int_{\R_+\times\Theta}\lambda(a,\theta)u_t( a,\theta)\rd a\nu(\rd\theta).}\label{eq:def-F}
	\end{numcases}

	We introduce the deterministic function $\fS$ defined on $\dR_+\times \Theta$ by
	\begin{align}
	\fS (t,\theta)&=\int_{\R_+\times\Theta}\gamma(a,\widetilde\theta)K(\widetilde\theta,\theta)u_t(a,\widetilde\theta)\rd a\nu(\rd\widetilde{\theta})\label{eq:def-S},
	\end{align}
    which denotes the average susceptibility of the population with trait $\theta$ at time $t$.
    Our main goal is to study the long time behaviour of the solution to the deterministic nonlinear equation~\eqref{VVL-eq-1}. 
First, using the method of characteristics, we easily obtain the following way of writing the solution.
\begin{prop}\label{Th-equiv-FPPZ}
Given a solution $u$ to the PDE  \eqref{VVL-eq-1}, then the pair $(\fF,\fS)$, defined by \eqref{eq:def-F} and \eqref{eq:def-S} respectively, is the unique solution to the following system of integral equations  
\begin{equation}\label{equiv-s-F}
		\begin{cases}
		\fS (t,\theta)=\int_{0}^{t}\int_{\Theta}\gamma(t-a,\widetilde\theta)\exp\left(-\int_{a}^{t}\fF(s)\gamma(s-a,\widetilde\theta)\rd s\right)\fF(a)\fS ( a,\widetilde\theta)K(\widetilde\theta,\theta)\rd a\nu(\rd\widetilde\theta)\\[0.3cm]
		\hspace{2cm}+\int_{0}^\infty\int_{\Theta}\gamma(a+t,\widetilde\theta) \exp\left(-\int_{0}^{t}\fF(s)\gamma(a+s,\widetilde\theta)\rd s\right)u_0(a,\widetilde\theta)K(\widetilde\theta,\theta)\rd a\nu(\rd\widetilde\theta)\\[0.3cm]
		\fF(t)=\int_{0}^{t}\int_{\Theta}\lambda(t-a,\widetilde\theta)\exp\left(-\int_{a}^{t}\fF(s)\gamma(s-a,\widetilde\theta)\rd s\right)\fS (a,\widetilde\theta)\fF(a)\rd a\nu(\rd\widetilde\theta)\\[0.3cm]
		\hspace{2cm}+\int_{0}^\infty\int_{\Theta}\lambda(a+t,\widetilde\theta) \exp\left(-\int_{0}^{t}\fF(s)\gamma(a+s,\widetilde\theta)\rd s\right)u_0(a,\widetilde\theta)\rd a\nu(\rd \widetilde\theta).
		\end{cases}
	\end{equation}
Conversely, given $(\fF,\fS)$ the solution to the system~\eqref{equiv-s-F}, the function
\begin{equation}\label{VVL-eq-2'}
	  u_t(a,\theta)=\begin{cases}
	  u_0(a-t,\theta)\exp\left(-\int_{0}^{t}\fF(s)\gamma(a-t+s,\theta)\rd s\right)&\mbox{ if }a> t\\
	  \fF(t-a)\fS (t-a,\theta)\exp\left(-\int_{t-a}^{t}\fF(s)\gamma(s-t+a,\theta)\rd s\right)&\mbox{ if }a\leq t.
	  \end{cases}
	  \end{equation}
 is the unique solution to the system~\eqref{VVL-eq-1}.
\end{prop}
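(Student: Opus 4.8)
The plan is to prove the two directions of the equivalence using the method of characteristics, which converts the transport PDE \eqref{VVL-eq-1} into an explicit formula for $u_t$ in terms of $\fF$ and $\fS$, and then to close the system by plugging this formula back into the definitions \eqref{eq:def-F} and \eqref{eq:def-S}.

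\medskip
\textbf{Direction 1: from the PDE to the integral system.} Suppose $u$ solves \eqref{VVL-eq-1}. Along a characteristic line $a = a_0 + t$, set $g(t) = u_t(a_0+t,\theta)$; the PDE gives $g'(t) = -\fF(t)\gamma(a_0+t,\theta)g(t)$, a linear ODE whose solution is $g(t) = g(0)\exp(-\int_0^t \fF(s)\gamma(a_0+s,\theta)\,\rd s)$. For a characteristic starting on the $t=0$ axis ($a_0 = a-t > 0$) the initial value is $u_0(a-t,\theta)$; for one starting on the $a=0$ axis at time $t-a$ (the case $a\leq t$) the initial value is the boundary value $u(t-a,0,\theta) = \fF(t-a)\fS(t-a,\theta)$ by \eqref{VVL-eq-1} and \eqref{eq:def-S}. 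This yields exactly formula \eqref{VVL-eq-2'}. Substituting \eqref{VVL-eq-2'} into $\fF(t) = \int_{\R_+\times\Theta}\lambda(a,\theta)u_t(a,\theta)\,\rd a\,\nu(\rd\theta)$ and splitting the $a$-integral at $a=t$, then performing the change of variables $a \mapsto t-a$ on the region $a\leq t$ and $a \mapsto a-t$ on the region $a>t$, produces the second equation of \eqref{equiv-s-F}; doing the same with $\fS(t,\theta) = \int_{\R_+\times\Theta}\gamma(a,\widetilde\theta)K(\widetilde\theta,\theta)u_t(a,\widetilde\theta)\,\rd a\,\nu(\rd\widetilde\theta)$ gives the first equation. (In both substitutions one must check the argument of the exponential transforms correctly: $\int_0^t \fF(s)\gamma(a'-t+s',\theta)\,\rd s$ becomes $\int_a^t \fF(s)\gamma(s-a,\widetilde\theta)\,\rd s$ after renaming, which is a routine but slightly fiddly bookkeeping of shifted arguments.)

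\medskip
\textbf{Direction 2: from the integral system to the PDE.} Conversely, given a solution $(\fF,\fS)$ of \eqref{equiv-s-F}, define $u_t$ by \eqref{VVL-eq-2'}. One checks directly that $u$ satisfies the transport equation $\partial_t u_t + \partial_a u_t = -\fF(t)\gamma u_t$ in the weak sense on each of the two regions $\{a>t\}$ and $\{a<t\}$ (this is immediate by differentiating the explicit exponential expressions), that $u(0,a,\theta) = u_0(a,\theta)$, and that the boundary condition holds: evaluating \eqref{VVL-eq-2'} at $a=0$ gives $u(t,0,\theta) = \fF(t)\fS(t,\theta)$, and one must verify that this $\fS(t,\theta)$ coincides with $\int_{\R_+\times\Theta}\gamma(a,\widetilde\theta)K(\widetilde\theta,\theta)u_t(a,\widetilde\theta)\,\rd a\,\nu(\rd\widetilde\theta)$ — but this is exactly the content of the first equation of \eqref{equiv-s-F} once \eqref{VVL-eq-2'} is substituted and the change of variables reversed. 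Similarly $\fF(t) = \int \lambda u_t$ follows from the second equation. So $u$ solves \eqref{VVL-eq-1}.

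\medskip
\textbf{Uniqueness.} For uniqueness in \eqref{equiv-s-F}, I would use a Gronwall/fixed-point argument on a short time interval $[0,T_0]$: the right-hand side of \eqref{equiv-s-F}, viewed as a map on $(\fF,\fS) \in \cC([0,T_0], \R_+ \times L^\infty(\nu))$, is Lipschitz because $\lambda \leq \lambda_*$, $\gamma \leq 1$, $K$ integrates to $1$ against $\nu$, and the exponential factors are bounded by $1$ and Lipschitz in $(\fF,\fS)$ on bounded sets; iterating over successive intervals gives global uniqueness since a priori bounds ($\fF \leq \lambda_*$, $\fS \leq 1$) prevent blow-up. Uniqueness for \eqref{VVL-eq-1} then follows from uniqueness for \eqref{equiv-s-F} via the bijection just established. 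The main obstacle is not conceptual but the care needed in the change-of-variables bookkeeping for the shifted arguments of $\gamma$ inside the exponentials, and in checking that the weak formulation of the PDE (test functions $\cC^1$ in $(t,a)$, merely measurable bounded in $\theta$, matching Theorem~\ref{VVM-th}) is genuinely equivalent to the characteristic representation across the diagonal $a=t$ where $u_t$ is only defined piecewise — one should note the solution need not be continuous there, so ``weak solution'' must be interpreted in the integrated sense of \eqref{VVM-eq1}, and the verification in Direction 2 should be phrased accordingly rather than by pointwise differentiation alone.
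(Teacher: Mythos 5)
Your proposal is correct and follows exactly the route the paper indicates for this proposition: the paper states it as a direct consequence of the method of characteristics without giving a written-out proof, and your Direction 1, Direction 2, and uniqueness-via-Gronwall argument is the standard way to fill in those details. The change-of-variables bookkeeping you sketch ($a \mapsto a-t$ on $\{a>t\}$, $a \mapsto t-a$ on $\{a\leq t\}$) is the right one and does produce the two terms of each equation in \eqref{equiv-s-F}, and your remark that \eqref{VVL-eq-2'} need not be continuous across $a=t$ so the PDE must be read in the integrated sense of \eqref{VVM-eq1} is well taken.

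One small imprecision worth fixing: in the uniqueness step you work in $\cC([0,T_0],\R_+\times L^\infty(\nu))$ and invoke the a priori bound $\fS\leq 1$. The paper only guarantees $\int_\Theta \fS(t,\theta)\,\nu(\rd\theta)\leq 1$ (Remark~\ref{rk:borne de F et S}), not a pointwise bound, and without Assumption~\ref{Hyp:endemicity}-\eqref{Hyp:kernel in L^1} the kernel $K(\widetilde\theta,\cdot)$ need not even be essentially bounded in $\theta$, so $\fS(t,\cdot)$ has no reason to lie in $L^\infty(\nu)$. The natural space is $\cC([0,T_0],\R_+\times L^1(\nu))$: the first equation of \eqref{equiv-s-F}, integrated in $\theta$ against $\nu$, brings out $\int_\Theta K(\widetilde\theta,\theta)\,\nu(\rd\theta)=1$ from Assumption~\ref{Hyp:Kernel}, so the map is Lipschitz in $\|\cdot\|_{L^1(\nu)}$ with constant controlled by $\lambda_*$ and $t$, and the Gronwall argument closes exactly as you intend. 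With that substitution the proof is complete.
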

Using the fact that $u_t$ is a probability density function with respect to $\rd a\otimes\nu$, we get the following straightforward consequence.

\begin{remark}\label{rk:borne de F et S}
We note that $\fF$ and $\fS$ satisfy for any $t\geq 0$
\begin{align}\label{VVM-eq-weight}
	&\int_{0}^\infty\int_{\Theta}\exp\left(-\int_{0}^{t}\fF(s)\gamma(a+s,\theta)\rd s \right) u_0(a,\theta)\rd a\nu(\rd\theta)\nonumber\\
&\hspace{3cm}+\int_{0}^{t}\int_{\Theta}\fF(a)\fS(a,\theta)\exp\left(-\int_{a}^{t}\fF(s)\gamma(s-a,\theta)\rd s \right)\rd a\nu(\rd\theta)=1.
\end{align}
Consequently, using Expression~\eqref{equiv-s-F}, the upper bounds of the curves $\lambda$ and $\gamma$, and Assumption~\ref{Hyp:Kernel},  we have for any $t\geq 0$, $\fF(t)\leq \lambda_*$ and   $\int_\Theta\fS(t,\theta)\nu(\rd \theta)\leq 1$.
\end{remark}

\begin{remark}\label{Rq-equiv-FPPZ}
When there is no memory of the previous infection, i.e. $K(\theta,\wt\theta)=K(\wt\theta)$ does not depend on $\theta$, we recover the result of Forien et al.: the system  \eqref{equiv-s-F} satisfied by $(\fF,\fS)$ is identical to the system \cite[Equations (3.7)-(3.8)]{forien-Zotsa2022stochastic}. 
\end{remark}

Before studying the long-term behavior of the solution to \eqref{VVL-eq-1}, we first identify its equilibria. To do this, we need to make some additional assumptions, which will provide a framework for the type of epidemiological model addressed in our study.

\begin{assumption}\label{Hyp:endemicity} 
\begin{enumerate}
    \item \label{Hyp:infectivity and susceptibility}
    $\lambda$ and $\gamma$ are non-negative measurable functions on $\dR_+\times \Theta$ such that
\begin{enumerate}
\item $\lambda$ is bounded by a constant $\lambda_*>0$;
\item $\gamma$ is a non-negative measurable function on $\dR_+\times \Theta$, bounded by $1$, with $\gamma(0,\cdot)\equiv 0$.
\item\label{Hyp:disjoint-support} $\lambda\gamma\equiv0$, and for any $\theta\in\Theta$,  
\[\sup\{t\geq0,\,\lambda(t,\theta)>0\}\leq\inf\{t\geq0,\,\gamma(t,\theta)>0\}.\]
\end{enumerate}
\item \label{Hyp:mean-susceptibility} There exists a positive measurable function $\gamma_*:\Theta\to[0,1]$ 
such that
	\[
	\lim_{a \to +\infty}\frac{1}{a}\int_0^a\gamma(s,\cdot)\rd s=\gamma_*(\cdot)\quad \nu\text{-a.e.}
	\]
\item \label{Hyp:ess sup}  $\forall x>0$,
		\begin{equation*}
			\mathrm{ess}\sup_{\Theta}\int_{\R_+}\exp\left(-x\int_{0}^{a}\gamma(s,\cdot)\rd s\right)\rd a<\infty.
		\end{equation*}
\item \label{Hyp:kernel in L^1} The kernel $K$ is positive on $\Theta^2$ and  the function $\displaystyle{\theta\mapsto \sup_{\widetilde\theta\in\Theta}K(\widetilde\theta,\theta)}$ belongs to $L^1(\nu)$.
\end{enumerate}
\end{assumption} 

 Assumption~\ref{Hyp:endemicity}-\eqref{Hyp:infectivity and susceptibility} means that the susceptibility jumps to $0$ after an infection, which prevents an individual from being reinfected immediately after an infection, and 
 as long as an individual remains infectious, its susceptibility is equal to $0$, so they cannot be re-infected. These hypotheses are usual in epidemiological models and have also been used in \cite{forien-Zotsa2022stochastic,foutel2023optimal,zotsa2023stochastic}. 
The existence of a positive force of infection at equilibrium relies heavily on the long time behavior of the susceptibility given in Assumption~\ref{Hyp:endemicity}-\eqref{Hyp:mean-susceptibility}, and on the primarily technical Assumptions~\ref{Hyp:endemicity}-\eqref{Hyp:ess sup} and \ref{Hyp:endemicity}-\eqref{Hyp:kernel in L^1}.
\begin{remark}
\begin{enumerate}
\item When $\gamma_*$ is bounded from below by a positive constant $\nu$-a.e, then Assumption~\ref{Hyp:endemicity}-\eqref{Hyp:ess sup} is satisfied.
\item
When the susceptibility curves satisfy one of the following conditions 
\begin{itemize}
    \item $a\mapsto \gamma(a,\cdot)$ are non-decreasing and non-null functions $\nu$-a.e, as assumed in \cite[Assumption 4.1]{forien-Zotsa2022stochastic}, or 
    \item $\gamma(a,\cdot)$ has a  positive limit $\gamma_*(\cdot)$ 
    when $a\to +\infty$ $\nu$-ae, or
    \item $a\mapsto \gamma(a,\cdot)$ are periodic functions $\nu$-a.e,
\end{itemize} 
we easily observe that Assumption~\ref{Hyp:endemicity}-\eqref{Hyp:mean-susceptibility} is satisfied.
\end{enumerate}
\end{remark}

In the following theorem, proved in Section~\ref{sec-proofs-endemic-v-ex}, a threshold of existence of an endemic equilibrium is identified.
\begin{theorem}\label{thm:existence-endemicity}
    Under Assumptions~\ref{Hyp:Kernel} and \ref{Hyp:endemicity}, there is a unique function $\fS _*$ in $L^1(\nu)$, positive $\nu$-a.e., solution to
\begin{equation*}
\fS _*(\theta)=\int_{\Theta}K(\wt\theta,\theta)\fS _*(\wt\theta)\nu(\rd\wt\theta)\text{ with }\int_{\R_+\times\Theta}\lambda(a,\theta)\fS _*(\theta)\rd a\nu(\rd\theta)=1.
  \end{equation*}
Moreover, there exists an endemic equilibrium when 
    \begin{equation} \label{eq:condition-endemic}
    \int_{\Theta}\frac{1}{\gamma_*(\theta)}\fS _*(\theta)\nu(\rd \theta)<1.
    \end{equation}
    This equilibrium is unique if we also assume that $\nu$-a.e,
    \begin{equation} \label{eq:condi-uniq-endemic}
    \forall a\geq 0 \quad \gamma(a,.)\geq \frac{1}{a}\int_0^a\gamma(s,.)\rd s.
    \end{equation}

 Under condition \eqref{eq:condi-uniq-endemic}, when $\displaystyle{\int_{\Theta}\frac{1}{\gamma_*(\theta)}\fS _*(\theta)\nu(\rd \theta)>1}$, there is no endemic equilibrium, the only equilibrium is disease free.

\end{theorem}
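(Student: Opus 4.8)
The plan is to reduce the determination of the equilibria of \eqref{VVL-eq-1} to a single scalar equation and then analyse that equation by elementary real analysis. A stationary solution $u$ of \eqref{VVL-eq-1} has a constant force of infection $\fF\ge 0$, and integrating the transport equation along characteristics gives $u(a,\theta)=u(0,\theta)\exp\!\big(-\fF\,\Gamma_\theta(a)\big)$, where $\Gamma_\theta(a):=\int_0^a\gamma(s,\theta)\rd s$. First I would note that Assumption~\ref{Hyp:endemicity}\,(2) forces $\Gamma_\theta(\infty)=+\infty$ for $\nu$-a.e.\ $\theta$ (because $\Gamma_\theta(a)/a\to\gamma_*(\theta)>0$), so $\int_0^\infty\gamma(a,\theta)e^{-\fF\Gamma_\theta(a)}\rd a=\fF^{-1}$; inserting the characteristic formula into the boundary relation of \eqref{VVL-eq-1} then collapses it to $u(0,\theta)=\int_\Theta K(\wt\theta,\theta)u(0,\wt\theta)\nu(\rd\wt\theta)$. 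Next, Assumption~\ref{Hyp:endemicity}\,(1c) ($\lambda\gamma\equiv 0$ with disjoint, ordered supports) gives $\lambda(a,\theta)e^{-\fF\Gamma_\theta(a)}\equiv\lambda(a,\theta)$, hence the self-consistency $\fF=\langle u,\lambda\rangle$ becomes $\fF=\int_\Theta\bar\lambda(\theta)u(0,\theta)\nu(\rd\theta)$ with $\bar\lambda(\theta):=\int_0^\infty\lambda(a,\theta)\rd a$.

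Next I would solve the linear eigenproblem. The operator $Th(\theta):=\int_\Theta K(\wt\theta,\theta)h(\wt\theta)\nu(\rd\wt\theta)$ on $L^1(\nu)$ is the adjoint of the Markov operator $f\mapsto\int_\Theta K(\cdot,\wt\theta)f(\wt\theta)\nu(\rd\wt\theta)$, which fixes the constant $\bone$ by Assumption~\ref{Hyp:Kernel}; hence the spectral radius of $T$ is $1$. By Assumption~\ref{Hyp:endemicity}\,(5), $T$ maps the unit ball of $L^1(\nu)$ into a set dominated $\nu$-a.e.\ by a fixed $L^1$ function, hence uniformly integrable and so relatively weakly compact; by the Dunford--Pettis property of $L^1$, $T^2$ is compact. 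The Krein--Rutman theorem together with the strong positivity of $T$ then gives a non-negative eigenvector $\fS_*\in L^1(\nu)$ at the eigenvalue $1$, unique up to a positive scalar, with $\fS_*>0$ $\nu$-a.e.\ (since $\fS_*=T\fS_*>0$ everywhere). As $\bar\lambda\in L^\infty(\nu)$ by Assumption~\ref{Hyp:endemicity}\,(4) and $\bar\lambda\not\equiv 0$, I normalize $\fS_*$ by $\int_\Theta\bar\lambda\fS_*\rd\nu=1$, which is the first assertion of Theorem~\ref{thm:existence-endemicity}. Combining with the previous paragraph, an endemic equilibrium has $u(0,\cdot)=c\fS_*$ for some $c\ge 0$, and the self-consistency relation forces $c=\fF$; since $\int_0^\infty e^{-\fF\Gamma_\theta(a)}\rd a\ge(e\fF)^{-1}$ uniformly in $\theta$ (using $\gamma\le 1$), such $u(0,\cdot)=\fF\fS_*$ is automatically in $L^1(\nu)$. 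Conversely, for every $\fF>0$ the function $u(a,\theta):=\fF\fS_*(\theta)e^{-\fF\Gamma_\theta(a)}$ satisfies all of \eqref{VVL-eq-1} except possibly $\int u\,\rd a\,\nu(\rd\theta)=1$. Thus the endemic equilibria correspond exactly to the positive solutions of
\[
\Phi(\fF):=\fF\int_\Theta\fS_*(\theta)\,g(\fF,\theta)\,\nu(\rd\theta)=1,\qquad g(\fF,\theta):=\int_0^\infty e^{-\fF\Gamma_\theta(a)}\,\rd a .
\]

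I would then prove three properties of $\Phi$ on $(0,\infty)$: (i) $\Phi$ is finite and continuous, via dominated convergence and Assumption~\ref{Hyp:endemicity}\,(4), which yields $g(\fF,\cdot)\in L^\infty(\nu)$; (ii) $\Phi(\fF)\to+\infty$ as $\fF\to\infty$, because $g(\fF,\theta)\ge|\{a\ge0:\Gamma_\theta(a)=0\}|$, which by Assumption~\ref{Hyp:endemicity}\,(1c) is $\ge\sup\{t:\lambda(t,\theta)>0\}$, a quantity that is $>0$ on $\{\bar\lambda>0\}$, a set of positive $\nu$-measure on which $\fS_*>0$; (iii) $\Phi(\fF)\to\int_\Theta\gamma_*(\theta)^{-1}\fS_*(\theta)\,\nu(\rd\theta)$ as $\fF\to0^+$, the pointwise limit $\fF g(\fF,\theta)\to\gamma_*(\theta)^{-1}$ being a Tauberian/dominated-convergence consequence of Assumption~\ref{Hyp:endemicity}\,(2), and the interchange with $\int\nu(\rd\theta)$ coming from a uniform-integrability estimate assembled from Assumptions~\ref{Hyp:endemicity}\,(2) and (4). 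Granting (i)--(iii), the intermediate value theorem yields a root $\fF^\star>0$ of $\Phi=1$ as soon as $\int_\Theta\gamma_*^{-1}\fS_*\,\rd\nu<1$, so $u^\star(a,\theta)=\fF^\star\fS_*(\theta)e^{-\fF^\star\Gamma_\theta(a)}$ is an endemic equilibrium.

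Finally, under \eqref{eq:condi-uniq-endemic} I get monotonicity: with $a=b/\fF$, $\partial_\fF\big[\fF\Gamma_\theta(b/\fF)\big]=\Gamma_\theta(a)-a\gamma(a,\theta)$, which is $\le 0$ precisely under \eqref{eq:condi-uniq-endemic}, so $\fF\mapsto\fF g(\fF,\theta)$ is non-decreasing, and strictly so for $\nu$-a.e.\ $\theta$ (since $\gamma(0,\cdot)=0<\gamma_*(\cdot)$ prevents $\gamma(\cdot,\theta)$ from being a.e.\ constant); hence $\Phi$ is strictly increasing. Therefore $\Phi=1$ has at most one root, which gives uniqueness when $\int_\Theta\gamma_*^{-1}\fS_*\,\rd\nu<1$; and when $\int_\Theta\gamma_*^{-1}\fS_*\,\rd\nu>1$ one has $\Phi(\fF)\ge\lim_{\eps\downarrow0}\Phi(\eps)\ge\int_\Theta\gamma_*^{-1}\fS_*\,\rd\nu>1$ for every $\fF>0$ (the last bound by (iii) together with monotonicity, or simply by Fatou), so no endemic equilibrium exists and the disease-free state $\fF=0$ is the only equilibrium. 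I expect the main obstacle to be step (iii): the averaging in Assumption~\ref{Hyp:endemicity}\,(2) holds only $\nu$-pointwise and with no uniform control on the rate, so extracting enough uniform integrability in $\theta$ to pass $\fF\to0^+$ inside $\int\nu(\rd\theta)$ --- exactly what the existence half requires when \eqref{eq:condi-uniq-endemic} is not assumed --- is the delicate part; under \eqref{eq:condi-uniq-endemic} it is free, since then $\fF g(\fF,\theta)\downarrow\gamma_*(\theta)^{-1}$ monotonically and monotone convergence applies. A lesser point is justifying the power-compactness of $T$ in the Krein--Rutman step, handled by the weak-compactness argument above and the Dunford--Pettis property of $L^1$.
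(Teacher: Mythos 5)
Your proposal follows essentially the same route as the paper: reduce stationary solutions to the kernel eigenproblem $\fS=T(\fS)$ (using $\int_0^\infty\gamma(a,\cdot)\rd a=\infty$ $\nu$-a.e.\ to eliminate the exponential factor), produce the normalized positive eigenfunction $\fS_*$, and then reduce existence to the scalar equation $\Phi(\fF)=1$, which is exactly the paper's $H$ of \eqref{VVM-eq-H}, settled by continuity, the limits at $0^+$ and $+\infty$, the intermediate value theorem, and monotonicity under \eqref{eq:condi-uniq-endemic}. The technical variants you choose are legitimate and play the same roles: power-compactness of $T$ via domination of $T(B)$ by $\theta\mapsto\sup_{\wt\theta}K(\wt\theta,\theta)$ (Assumption~\ref{Hyp:endemicity}-\eqref{Hyp:kernel in L^1}), weak compactness and the Dunford--Pettis property of $L^1$, where the paper uses the Hille--Tamarkin criterion; and your explicit lower bound $g(\fF,\theta)\geq \sup\{t:\lambda(t,\theta)>0\}$ for the limit at $+\infty$, which is arguably cleaner than the paper's Fatou argument. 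Two caveats on the spectral step: in $L^1(\nu)$ the positive cone has empty interior, so ``Krein--Rutman with strong positivity'' should be the irreducible-kernel/Banach-lattice version (de Pagter, Schaefer) --- which is precisely the theorem the paper cites --- and you also need $r(T)\leq 1$ (e.g.\ from $\NRM{T}_{L^1\to L^1}\leq 1$, as in Remark~\ref{rk:spectreT}) before concluding $r(T)=1$ from the fact that the adjoint fixes $\bone$.

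Two points of substance. First, the step you explicitly leave open --- passing $\fF\to 0^+$ inside the $\theta$-integral to get $\Phi(\fF)\to\int_\Theta\gamma_*^{-1}\fS_*\,\rd\nu$, i.e.\ the upper-bound direction that Fatou does not give and that the existence claim genuinely needs when \eqref{eq:condi-uniq-endemic} is not assumed --- is exactly the content of the paper's Lemma~\ref{VVM-lem-ex}, where after the change of variables $b=ax$ the limit is taken by appealing to Assumption~\ref{Hyp:endemicity}-\eqref{Hyp:mean-susceptibility}; so you have correctly located the crux, but your sketch (``uniform integrability assembled from \eqref{Hyp:mean-susceptibility} and \eqref{Hyp:ess sup}'') does not close it, since the convergence in Assumption~\ref{Hyp:endemicity}-\eqref{Hyp:mean-susceptibility} carries no uniformity in $\theta$ and \eqref{Hyp:ess sup} alone does not yield an $x$-independent dominating function; under \eqref{eq:condi-uniq-endemic} monotone convergence settles it, as you note. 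Second, your justification of \emph{strict} monotonicity of $\Phi$ under \eqref{eq:condi-uniq-endemic} is incorrect as stated: $\gamma(0,\cdot)=0$ concerns a single point and does not prevent $\gamma(\cdot,\theta)$ from being a.e.\ constant. The right argument uses Assumption~\ref{Hyp:endemicity}-\eqref{Hyp:disjoint-support}: on the positive-$\nu$-measure set where $\int_0^\infty\lambda(a,\theta)\rd a>0$ (nonempty since $\int\lambda\fS_*=1$), $\gamma(\cdot,\theta)$ vanishes on an interval of positive length while $\gamma_*(\theta)>0$, so $a\gamma(a,\theta)>\int_0^a\gamma(s,\theta)\rd s$ on a set of positive $\rd a\otimes\nu$-measure and hence $\Phi'>0$. (The paper itself only asserts that $H$ is non-decreasing, which taken literally would still allow an interval of roots; your attention to strictness is an improvement, modulo this fix.)
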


\begin{remark}\label{rk:endemic-conditon}
Since $\displaystyle{\int_{\R_+\times\Theta}\lambda(a,\theta)\fS _*(\theta)\rd a\nu(\rd\theta)=1}$,
 the condition \eqref{eq:condition-endemic} in Theorem~\ref{thm:existence-endemicity} for the existence of an endemic equilibrium can be written
    \begin{equation}\label{eq:R_0^*}
    \dE_\nu^*\SBRA{\frac{1}{\gamma_*}}<R_0^*\quad \text{with }R_0^*=\dE_\nu^*\SBRA{\int_0^\infty\lambda(a)\rd a},
    \end{equation}
    where $\dE_\nu^*$ is the expectation on $\Theta$ with respect to the probability measure $\kappa^{-1}\fS _*(\theta)\nu(\rd\theta)$, with $\kappa=\int_\Theta \fS _*(\theta)\nu(\rd\theta)$. $R_0^*$ can thus be seen as the average number of infections, under this new probability measure, produced by an infected individual in a population
completely vulnerable to the disease.
    Consequently, up to a change of measure, we obtain the same kind of condition for the existence of an endemic equilibrium as in \cite{forien-Zotsa2022stochastic}.
\end{remark}
In the memory-free case, Theorem~\ref{thm:existence-endemicity} extends the result of \cite{forien-Zotsa2022stochastic} to non-monotone susceptibility curves (see Example~\ref{ex:values of S_*}-(\ref{ex:no memory})), and also extends the result of \cite{foutel2023optimal} to non-independent infectivity and susceptibility curves, in the case of a \emph{parametric model}. In particular, when there is no memory of the last infection, we recover in Section~\ref{sec:vaccin} the same threshold as in \cite{foutel2023optimal}, revealing that their assumption of independence between the infectivity and the susceptibility curves is not necessary. We also describe precisely in Section~\ref{sec:vaccin} the long time behavior of the disease for a \emph{toy model} with vaccination and with memory of the last infection.

\medskip
We derive the local stability of an endemic equilibrium when there is no memory of the last infection and
 under the following assumption, which means that the susceptibility curves are uniformly positive in long time.
\begin{assumption}\label{Hyp:endemic-stability}
    There exist  $\sigma\in(0,1]$ and a positive constant $a_*$ such that  $\forall (a,\theta)\in\dR_+\times \Theta$, 
    \[
    \gamma(a,\theta)\geq \sigma\ind_{(a_*,+\infty)}(a).
    \]
\end{assumption}
Assuming that $a_*$ does not depend on $\theta$ is a strong assumption, as it implies that the duration of infectivity is deterministically bounded and excludes the possibility of modeling infectivity durations with, for example, an exponential distribution.
When there is no memory, we notice in  Example~\ref{ex:values of S_*} that $\fS_*\equiv\frac{1}{R_0}$  with $R_0=\dE_\nu\SBRA{\int_0^\infty\lambda(a)\rd a}$.
For technical reasons, we also need to assume that the measure $\nu$ to be absolutely continuous with respect to the Lebesgue measure.

\begin{theorem}\label{VVM-conj}
Assume that $\Theta$ is an open subset of $\dR^d$ and $\nu$ is a probability measure absolutely continuous with respect to the Lebesgue measure with support on $\Theta$.

 We assume that there is no memory of the previous infections (i.e., $K\equiv 1$). Under Assumptions~\ref{Hyp:endemicity}, \ref{Hyp:endemic-stability}, and under the condition
 \[
    R_0>\int_\Theta \frac{1}{\gamma_*(\theta)}\nu(\rd \theta),
    \]
    we denote $u_*$ an endemic equilibrium of the PDE~\eqref{VVL-eq-1}. 
If $\lambda$ and $\gamma$ are such that Equation~\eqref{eq:condition-sur-alpha} has no solution $\alpha\in\mathbb{C}$ with $\mathcal{R}e (\alpha)\geq 0$, then there is local stability of the equilibrium. 
    More precisely, there exists $w_0<0$ such that for any $w\in(w_0,0)$,  
    there exist $\delta>0$, $c>0$ such that $\NRM{u_0-u_*}\leq \delta$ implies that for any $t\geq 0$,
    \[
    \NRM{u_t-u_*}_{ L^1(\rd a\otimes\nu)}\leq c\re^{wt}\NRM{u_0-u_*}_{ L^1(\rd a\otimes\nu)},
    \]
where $u$ is the solution to PDE~\eqref{VVL-eq-1} starting from $u_0$.
\end{theorem}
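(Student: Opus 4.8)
The plan is to linearize the PDE \eqref{VVL-eq-1} around the endemic equilibrium $u_*$ and recast the problem as an abstract semi-linear Cauchy problem, then invoke the principle of linearized stability from the theory of integrated semigroups (as developed in \cite{magal2018theory,thieme1990semiflows,webb1985theory}). First I would set up the functional-analytic framework: write $v_t = u_t - u_*$, so that the transport structure $\partial_t v + \partial_a v$ together with the boundary condition at $a=0$ defines an unbounded linear operator $A$ on a suitable state space, for which $X = L^1(\rd a\otimes\nu)$ is the natural candidate (with the boundary condition absorbed into the domain, or handled via the classical trick of extending the state space by $\R$ to make $A$ a Hille--Yosida operator with nondensely defined domain — this is where Assumption \ref{Hyp:endemic-stability} and the absolute continuity of $\nu$ enter, ensuring the relevant resolvent estimates and compactness). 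The nonlinearity $F$ collects the quadratic-in-$\fF$ interaction terms; one checks $F$ is $C^1$ near $0$ with $F(0)=0$, $DF(0)=0$, so that the linear part is exactly $A$ plus the bounded linearization of the $\fF$-coupling evaluated at $u_*$, call it $B$. The heart of the matter is then the spectral analysis of $A+B$.

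Next I would compute the characteristic equation. Because the equilibrium is stationary, the method of characteristics (Proposition \ref{Th-equiv-FPPZ}) lets one solve the linearized equation explicitly along characteristics, reducing the eigenvalue problem $(A+B)\phi = \alpha\phi$ to a scalar (or, in the memory-free case $K\equiv 1$, genuinely scalar after integrating out $\theta$) fixed-point/determinant condition — this is the equation \eqref{eq:condition-sur-alpha} referred to in the statement. Concretely, one makes the ansatz that the perturbation of $\fF$ is of the form $e^{\alpha t}$, substitutes into the Volterra system \eqref{equiv-s-F} linearized at $(\fF_*, \fS_*)$, and obtains a Laplace-transform-type identity; $\alpha$ is an eigenvalue of the generator iff it solves this identity. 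The hypothesis of the theorem is precisely that this transcendental equation has no root with $\cR e(\alpha)\ge 0$. One must also rule out $\alpha = 0$ as an eigenvalue, which should follow from the uniqueness part of Theorem \ref{thm:existence-endemicity} (the equilibrium is isolated), and check that the essential spectrum of the generator lies strictly in the left half-plane — here is where one needs the transport semigroup to be eventually compact or to have negative essential growth bound, which is exactly what Assumption \ref{Hyp:endemic-stability} buys: the uniform lower bound $\gamma \ge \sigma\ind_{(a_*,\infty)}$ forces exponential decay of the ``survival'' kernel $\exp(-\int_0^a \fF_*\gamma)$, giving the needed estimate on the essential radius, while $\nu \ll \mathrm{Leb}$ on an open set is used to get compactness of the convolution/integral operators appearing in $B$ (Riemann--Lebesgue / Arzelà--Ascoli type arguments on the trait variable).

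Having established that the spectral bound of $A+B$ is strictly negative, say $s(A+B) =: w_0 < 0$, the principle of linearized stability for the semi-linear problem — in the nondensely-defined / integrated-semigroup formulation, this is the theorem of Magal--Ruan (see \cite[Ch.~6]{magal2018theory}) — yields: for every $w\in(w_0,0)$ there are $\delta>0$ and $c>0$ such that $\|u_0-u_*\|_{L^1}\le\delta$ implies $\|u_t-u_*\|_{L^1}\le c\,e^{wt}\|u_0-u_*\|_{L^1}$ for all $t\ge 0$, which is the claimed conclusion. I expect the main obstacle to be twofold: first, verifying the Hille--Yosida (or ``almost sectorial'') estimates and, crucially, the localization of the essential spectrum strictly to the left of the imaginary axis — this is the technical core and the reason Assumption \ref{Hyp:endemic-stability} with a $\theta$-independent $a_*$ is imposed; second, the bookkeeping needed to show that the finitely many eigenvalues in $\cR e(\alpha)\ge -\eta$ (for small $\eta$) are exactly the roots of \eqref{eq:condition-sur-alpha}, so that the hypothesis of the theorem really does imply $s(A+B)<0$. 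The memory-free restriction $K\equiv 1$ is what keeps the characteristic equation scalar and the linearized operator's structure tractable; extending beyond it is precisely the open problem the authors flag.
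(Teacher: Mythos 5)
Your strategy matches the paper's proof essentially step for step: recasting \eqref{VVL-eq-1} as an abstract semi-linear Cauchy problem, linearizing at $u_*$, splitting the linearized generator into a transport part plus a compact perturbation, bounding the essential growth bound via Assumption~\ref{Hyp:endemic-stability} (giving $w_{\rm ess}\leq -\fF_*\sigma$), deriving the characteristic equation \eqref{eq:condition-sur-alpha} from the eigenvalue problem, and concluding with the linearized-stability theorem of Thieme~\cite{thieme1990semiflows} (equivalently Magal--Ruan). The only substantive detail you glossed over is that, because the boundary condition $u(t,0,\theta)$ depends on $\theta$, the state-space extension must be by $L^1(\nu)$ rather than by $\R$ — i.e.\ the paper works on $\dX=L^1(\nu)\times L^1(\rd a\otimes\nu)$ with $\dX_0=\{0\}\times L^1(\rd a\otimes\nu)$ — and, correspondingly, the linearized operator is studied on the hyperplane $\{\int u\,\rd a\,\rd\nu=0\}$, which is the constraint that lets Lemma~\ref{lem:Lbda-int-equiv} reduce the eigenvalue problem to the scalar equation \eqref{eq:condition-sur-alpha}.
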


A particular SIS-type model, satisfying assumptions of Theorem~\ref{VVM-conj}, is presented in Section~\ref{end-spec-L}.
This theorem is proved in Section~\ref{sec:local stability}. 
As mentioned in the introduction, classical techniques cannot be applied to prove Theorem~\ref{VVM-conj}. Our model is not covered by the work of \cite{magal2018theory,thieme1990semiflows,webb1985theory}. 
Indeed, the  study in \cite[Chapter~$8$, Section~$8.2.2$]{magal2018theory} requires the susceptibility curves $\gamma$ to be strictly positive, which is an unusual hypothesis in epidemiology because it prevents to have a full immunity period. Furthermore, the boundary condition of our PDE~\eqref{VVL-eq-1} is different than the one in \cite{thieme1990semiflows,webb1985theory}. See also \cite[Section I.1.2]{ngoufack2024stochastic} for more details.

\section{Functional law of Large Numbers}\label{VVM-sec-FLLN}

In this section we prove Theorem~\ref{VVM-th}. We recall that we consider an homogeneous population of size $N\geq2$.
Let $(a_0^k,\theta_0^k)_{1\leq k\leq N}$ be i.i.d. random variables with distribution $\mu_0$ on $\dR_+\times\Theta$ defined on the probability space $\PAR{\Omega,\cF,\dP}$, modeling the initial age and parameter of each individuals in the population.  We consider $(a^N_k,\theta^N_k)_{1\leq k\leq N}$ the solution to the SDEs~\eqref{eq:SDE}. In this section, we study the convergence of the associated (random) empirical measure $\mu^N$, defined by \eqref{H-mu}, to a deterministic measure $\mu$ on $\dR_+\times \Theta$ when $N$ goes to infinity.

\medskip
We easily check that for any test functions $f:\R_+\times\R_+\times\dR^d\to\R_+$, such that for all $\theta\in\Theta,\, (t,a)\mapsto f_t(a,\theta)=f(t,a,\theta)$ is a continuously differentiable function with respect to its first two variables, we have
\begin{align}
	&\langle\mu ^N_t,f_t\rangle=\langle\mu ^N_0,f_0\rangle+\int_0^t \langle\mu ^N_s,\partial_af_s+\partial_sf_s\rangle  \rd s \nonumber\\
	&+\frac{1}{N}\sum_{k=1}^N\int_{0}^{t}\int_{\Theta}\int_{0}^{\infty}\PAR{f_{s}(0,\widetilde\theta)-f_s\PAR{a^N_k(s^-),\theta^N_k(s^-)}}\mathds{1}_{\fF^N(s^-)\gamma^N_k(s^-)K(\theta^N_k(s^-),\widetilde\theta)\geq z}Q_k(\rd z,\rd \widetilde\theta,\rd s)
\nonumber\\
\label{VV-eq-m}
&=\langle\mu ^N_0,f_0\rangle+\int_0^t \langle\mu ^N_s,\partial_af_s+\partial_sf_s\rangle  \rd s +\int_{0}^{t}\langle\mu ^N_{s},\lambda\rangle\langle\mu ^N_{s},Rf_{s}\rangle\rd s
\nonumber\\
&+\frac{1}{N}\sum_{k=1}^N\int_{0}^{t}\int_{\Theta}\int_{0}^{\infty}\left(f_{s}(0,\widetilde\theta)-f_s(a^N_k(s^-),\theta^N_k(s^-))\right)\mathds{1}_{\fF^N(s^-)\gamma^N_k(s^-)K(\theta^N_k(s^-),\widetilde\theta)\geq z}\overline{Q}_k(\rd z,\rd\widetilde \theta,\rd s),
\end{align}
	where $\overline{Q}_k$ is the compensated Poisson measure of $Q_k$, and $Rf$ is defined by \eqref{eq:Rjump-term}.

	\subsection{The deterministic limit}
	Recall that $\mu_0$ is the initial distribution of ages and parameters in the population. 
		\begin{remark}
		Note that when $(a_0^k,\theta_0^k)_{1\leq k\leq N}$ are i.i.d. random variables with distribution $\mu_0$, by the law of large number, it follows that, the random measure $\mu _0^N$ converges to $\mu_0$ in law a.s., when $N\to\infty$. 
	\end{remark}
	We denote by $\mathcal{P}(\R_+\times\Theta)$ the space of probability measures on $\PAR{\dR_+\times \Theta, \cB(\dR_+)\otimes\cH}$.
	If $\mu^N$ converges weakly to a probability measure $\mu =\left(\mu _t\right)_{t\geq0}\in\dD(\R_+,\mathcal{P}(\R_+\times\Theta))$, by \eqref{VV-eq-m}, $\mu$ should then satisfy  Equation~\eqref{VVM-eq1}
	for any measurable bounded function $f$ on $\R_+\times\R_+\times\Theta$, and of class $\cC^1$ with respect to its first two variables. 
We now introduce the norm in total variation on $\mathcal{P}(\R_+\times\Theta)$, defined by
	\[ \TV{\mu-\nu}=\sup_{\varphi\in L^\infty(\rd a\otimes \nu),\,\|\varphi\|_{\infty}\le1}\left|\langle \mu-\nu,\varphi\rangle\right|.\]

\begin{prop}\label{VVM-ex-th}
	Under the assumptions of Theorem~\ref{VVM-th}, the solution $\mu =\left(\mu _t\right)_{t\geq0}\in\dD(\R_+;\mathcal{P}(\R_+\times\Theta))$ to Equation~\eqref{VVM-eq1}  is unique. 
\end{prop}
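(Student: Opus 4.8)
The plan is to use that Equation~\eqref{VVM-eq1} becomes \emph{linear} in $\mu$ once the scalar $\fF(s)=\langle\mu_s,\lambda\rangle$ is frozen, and to prove uniqueness by a duality argument closed with a Grönwall inequality in total variation. Suppose $\mu,\bar\mu\in\dD(\R_+;\mathcal{P}(\R_+\times\Theta))$ both solve \eqref{VVM-eq1} with the same $\mu_0$, and write $\fF(t)=\langle\mu_t,\lambda\rangle$, $\bar\fF(t)=\langle\bar\mu_t,\lambda\rangle$, both bounded by $\lambda_*$ since $\lambda\le\lambda_*$ and $\mu_t,\bar\mu_t$ are probability measures. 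Fix $T>0$, $t\in[0,T]$ and a bounded measurable $\varphi$ on $\R_+\times\Theta$. First I would construct, using $\fF$, a family of \emph{adjoint} test functions $(f_s)_{0\le s\le t}$ with $f_t=\varphi$ solving the backward transport equation with the nonlocal rebirth term, $\partial_sf_s+\partial_af_s+\fF(s)Rf_s=0$ on $[0,t]$, with $R$ the operator from \eqref{eq:Rjump-term}.

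For the construction, note that Assumption~\ref{Hyp:Kernel} gives $Rf_s(a,\theta)=\gamma(a,\theta)\big(\int_\Theta f_s(0,\tilde\theta)K(\theta,\tilde\theta)\nu(\rd\tilde\theta)-f_s(a,\theta)\big)$, so integrating along the characteristics $s\mapsto(s,b+s)$ reduces the equation to a linear Volterra equation for the trace $\psi_s(\theta):=f_s(0,\theta)$,
\begin{multline*}
\psi_s(\theta)=\varphi(t-s,\theta)\exp\Big(-\int_s^t\fF(r)\gamma(r-s,\theta)\rd r\Big)\\
+\int_s^t\fF(r)\gamma(r-s,\theta)\Big(\int_\Theta\psi_r(\tilde\theta)K(\theta,\tilde\theta)\nu(\rd\tilde\theta)\Big)\exp\Big(-\int_s^r\fF(u)\gamma(u-s,\theta)\rd u\Big)\rd r,
\end{multline*}
with $f_s(b,\theta)$ recovered from $\psi$ by the analogous formula along the characteristic through $(t,b+t-s)$. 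Since the operator $h\mapsto\int_\Theta h(\tilde\theta)K(\theta,\tilde\theta)\nu(\rd\tilde\theta)$ has operator norm $\le 1$ on $L^\infty(\Theta)$, and the Volterra kernel is bounded by $\lambda_*$ (as $\fF\le\lambda_*$, $\gamma\le 1$), the Picard iteration converges: the $n$-th iterate is bounded by $\lambda_*^n(t-s)^n/n!\,\NRM{\varphi}_\infty$, so there is a unique bounded solution, with $\sup_{s\le t}\NRM{f_s}_\infty\le\NRM{\varphi}_\infty\re^{\lambda_*t}$.

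Second, I would plug $f$ into \eqref{VVM-eq1}: using $\partial_sf_s+\partial_af_s=-\fF(s)Rf_s$ and $\langle\mu_s,\lambda\rangle=\fF(s)$, the two integral terms cancel, so $\langle\mu_t,\varphi\rangle=\langle\mu_0,f_0\rangle$. Doing the same with $\bar\mu$ leaves a remainder, because $f$ was built with $\fF$ rather than $\bar\fF$: $\langle\bar\mu_t,\varphi\rangle=\langle\bar\mu_0,f_0\rangle+\int_0^t(\bar\fF(s)-\fF(s))\langle\bar\mu_s,Rf_s\rangle\rd s$. Subtracting (and $\mu_0=\bar\mu_0$), then bounding $|Rf_s|\le 2\sup_{r\le t}\NRM{f_r}_\infty\le 2\NRM{\varphi}_\infty\re^{\lambda_*t}$, gives $|\langle\mu_t-\bar\mu_t,\varphi\rangle|\le 2\re^{\lambda_*T}\NRM{\varphi}_\infty\int_0^t|\fF(s)-\bar\fF(s)|\rd s$. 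Taking the supremum over $\NRM{\varphi}_\infty\le 1$ and using $|\fF(s)-\bar\fF(s)|\le\lambda_*\TV{\mu_s-\bar\mu_s}$ yields $\TV{\mu_t-\bar\mu_t}\le 2\lambda_*\re^{\lambda_*T}\int_0^t\TV{\mu_s-\bar\mu_s}\rd s$ on $[0,T]$, and Grönwall's lemma gives $\mu_t=\bar\mu_t$ for all $t$.

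I expect the main obstacle to be that $\gamma$ is only measurable, so the adjoint functions $f_s$ are merely Lipschitz — in fact only absolutely continuous along characteristics — in $(s,a)$, hence not literally admissible $\mathcal{C}^1$ test functions in \eqref{VVM-eq1}. The fix is to first extend \eqref{VVM-eq1} to all bounded measurable $f$ that are absolutely continuous along the characteristics $s\mapsto(s,b+s)$ with bounded material derivative $(\partial_s+\partial_a)f$ — precisely the regularity the constructed $f$ enjoys — by mollifying $f$ along characteristics; alternatively, one regularizes $\gamma$ into smooth $\gamma_\delta\in[0,1]$, builds the corresponding $\mathcal{C}^1$ adjoint $f^\delta$, applies \eqref{VVM-eq1}, and lets $\delta\to 0$, controlling the error term carrying $\gamma-\gamma_\delta$. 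The only other point is that $K$ need not be bounded, but since only $\int_\Theta K(\theta,\tilde\theta)\nu(\rd\tilde\theta)=1$ is used this is harmless. Finally, I note an equivalent route: defining $\fS(t,\theta):=\int_{\R_+\times\Theta}\gamma(a,\tilde\theta)K(\tilde\theta,\theta)\mu_t(\rd a,\rd\tilde\theta)$, which by Fubini and Assumption~\ref{Hyp:Kernel} lies in $L^1(\nu)$ with $\int_\Theta\fS(t,\theta)\nu(\rd\theta)\le 1$ even when $\mu_0$ has no density, one checks from \eqref{VVM-eq1} that $(\fF,\fS)$ solves the Volterra system~\eqref{equiv-s-F}, whose solution is unique by the Grönwall argument of \cite{forien-Zotsa2022stochastic} (cf.\ Remark~\ref{Rq-equiv-FPPZ}), after which $\mu_t$ is recovered from $(\fF,\fS,\mu_0)$ through \eqref{VVL-eq-2'}.
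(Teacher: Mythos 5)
Your main argument is sound but takes a heavier route than the paper's. The paper solves only the \emph{pure transport} backward equation $\partial_s f_s+\partial_a f_s=0$, $f_t=\varphi$, whose explicit solution $f_s(a,\theta)=\varphi(a+t-s,\theta)$ is automatically $\mathcal{C}^1$ whenever $\varphi$ is; substituting it into \eqref{VVM-eq1} kills the transport integral but leaves the nonlinear term $\int_0^t\langle\mu_s^i,\lambda\rangle\langle\mu_s^i,R\varphi_{t-s}\rangle\rd s$, whose bilinear structure is then exploited directly with the triangle inequality to get $\ABS{\langle\mu_t^1-\mu_t^2,\varphi\rangle}\le 4\lambda_*\int_0^t\TV{\mu_s^1-\mu_s^2}\rd s$ before Grönwall. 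You instead build the \emph{full} adjoint $\partial_sf_s+\partial_af_s+\fF(s)Rf_s=0$ so that the two integral terms cancel exactly for $\mu$, and the only remainder against $\bar\mu$ carries the factor $\bar\fF-\fF$; this is a cleaner-looking cancellation, but it costs you a genuine Volterra/Picard construction and, as you correctly flag, the resulting $f_s$ is in general only Lipschitz along characteristics when $\gamma$ is merely measurable, so it is not an admissible test function for \eqref{VVM-eq1} without an extra mollification or regularize-$\gamma$-then-pass-to-the-limit step. That extra regularity lemma is precisely what the paper's choice of the transport-only adjoint avoids having to prove, which is why their argument is shorter. Both routes close with the same Grönwall bound (yours with an extra $\re^{\lambda_*T}$ factor, which is harmless), so the proposal is correct modulo carrying out the mollification step in detail. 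Your parenthetical alternative — reduce to uniqueness of $(\fF,\fS)$ for the Volterra system \eqref{equiv-s-F} — is workable too, but note that \eqref{equiv-s-F} as written in the paper presupposes an initial density $u_0$, so you would first need the measure-valued version of that system and a corresponding reconstruction formula for $\mu_t$ from $(\fF,\fS,\mu_0)$.
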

\begin{proof}
	Assume that there are two solutions $\mu ^1$ and $\mu ^2$ of Equation~\eqref{VVM-eq1} with the same initial condition $\mu _0$. Let $\varphi$ be a test function, in the sense that $\varphi$ is a measurable bounded function on $\R_+\times \Theta$, of class $\cC^1$ with respect to its first variable, with $\|\varphi\|_{\infty}\le1$. 
		
	For any fixed $t>0$ and for all $\theta\in\Theta$, the following parametric transport equation
	\begin{equation*}
		\begin{cases}
		\partial_s f_s(a,\theta)+\partial_a f_s(a,\theta)=0\quad\forall s\in[0,t]\\
		f_t(a,\theta)=\varphi(a,\theta),
		\end{cases}
	\end{equation*}
	has a unique solution $f:(s,a,\theta)\to f_s(a,\theta)$ defined by:
	\begin{equation*}
\forall t\in\R_+,\forall s\in[0,t],\forall (a,\theta)\in\R_+\times\Theta,\quad f_s(a,\theta)=\varphi(a-(s-t),\theta).
	\end{equation*}
	Consequently, from  \eqref{VVM-eq1} and \eqref{eq:Rjump-term}, for $i\in\BRA{1,2}$,
	 \begin{equation}
	 \langle\mu _t^i,\varphi\rangle=\langle\mu _0,\varphi_t\rangle+\int_{0}^{t}\langle\mu _{s}^i,\lambda\rangle\langle\mu _{s}^i,R\varphi_{t-s}\rangle\rd s,
	 \end{equation}
	 where $\varphi_s(a,\theta)=\varphi( a+s,\theta)$. Since $\NRM{\varphi}_{\infty}\leq 1$, $\gamma\in[0,1]$, and by Assumption~\ref{Hyp:Kernel}, we note that $\left|R\varphi_{t-s}(a,\theta)\right|\leq 2$.
	 Therefore, using again the upper bounds of $\gamma$ and $\lambda$, it follows that,  for $t\geq0$  
	\begin{align*}
	\left|\langle \mu ^1_t-\mu ^2_t,\varphi\rangle\right|&\leq\int_{0}^{t}\left|\langle\mu _{s}^1-\mu ^2_s,\lambda\rangle\right|\left|\langle\mu _{s}^1,R\varphi_{t-s}\rangle\right|\rd s+\int_{0}^{t}\left|\langle\mu _{s}^2,\lambda\rangle\right|\ABS{\langle\mu _{s}^1-\mu _s^2,R\varphi_{t-s}\rangle}\rd s\\
	&\leq 4\lambda_*\int_{0}^{t}\TV{\mu _s^1-\mu _s^2} \rd s.
	\end{align*}	
	Since this class of test functions is dense in $\BRA{\varphi \in L^\infty(\rd a\otimes\nu): \NRM{\varphi}_{L^\infty(\rd a\otimes\nu)}\leq 1}$, we deduce that 
\[\sup_{r\in[0,t]}\TV{\mu _r^1-\mu _r^2}\leq 4\lambda_*\int_{0}^{t}\sup_{0\leq r\leq s}\TV{\mu _r^1-\mu _r^2}\rd s,\]
	and by Gronwall's lemma we obtain $\sup_{r\in[0,s]}\TV{\mu _r^1-\mu _r^2}=0$ $\forall t>0$. Then the uniqueness is proved.
	
\end{proof}

\subsection{Propagation of chaos}

In this section we prove Theorem~\ref{VVM-th}. But first, let us make a few comments.
From Proposition~\ref{VVM-ex-th} and Theorem~\ref{VVM-th}, we have the following straightforward corollary.
\begin{coro}
Under the assumptions of Theorem~\ref{VVM-th}, there exists a unique solution to Equation~\eqref{VVM-eq1}.
\end{coro}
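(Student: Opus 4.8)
The plan is to obtain the corollary by combining the two results that immediately precede it. Existence is a byproduct of Theorem~\ref{VVM-th}: that theorem asserts that the empirical measures $\mu^N$ converge in law to a limit $\mu\in\dD(\R_+;\cP(\R_+\times\Theta))$, and that this $\mu$ satisfies the weak equation~\eqref{VVM-eq1}; in particular~\eqref{VVM-eq1} has at least one solution in that class. Uniqueness in $\dD(\R_+;\cP(\R_+\times\Theta))$ is exactly the content of Proposition~\ref{VVM-ex-th}. Putting the two together yields the statement.

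For completeness I would recall briefly why the limit furnished by Theorem~\ref{VVM-th} does solve~\eqref{VVM-eq1}, i.e. how one passes to the limit in the pre-limit identity~\eqref{VV-eq-m}. For a test function $f$ that is bounded and of class $\cC^1$ in its first two variables, the map $s\mapsto\mu^N_s$ enters the right-hand side of~\eqref{VV-eq-m} through the continuous (in the Skorohod sense, off the jump times) functionals $\langle\mu^N_0,f_0\rangle$, $\int_0^t\langle\mu^N_s,\partial_af_s+\partial_sf_s\rangle\,\rd s$ and $\int_0^t\langle\mu^N_s,\lambda\rangle\langle\mu^N_s,Rf_s\rangle\,\rd s$, where the last one is controlled using $\|\lambda\|_\infty\leq\lambda_*$, $\gamma\in[0,1]$ and Assumption~\ref{Hyp:Kernel} to bound $|Rf_s|\leq 2\|f\|_\infty$; these converge along the converging subsequence. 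The remaining term, a martingale built from the compensated measures $\overline{Q}_k$, has second moment of order $1/N$ by Doob's inequality together with the explicit bound $\fF^N(s)\gamma^N_k(s)\leq\lambda_*$ on the jump intensities, hence vanishes in $L^2$ as $N\to\infty$. This identifies every limit point as a solution of~\eqref{VVM-eq1}; Proposition~\ref{VVM-ex-th} then shows the solution is unique, which both pins down the limit $\mu$ and proves the corollary.

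There is no genuine obstacle intrinsic to the corollary itself: all the work sits upstream, namely the tightness of $(\mu^N)_N$ in $\dD(\R_+;\cP(\R_+\times\Theta))$ and the identification of limit points needed for Theorem~\ref{VVM-th}, and the total-variation Gronwall estimate underlying Proposition~\ref{VVM-ex-th}. The only point deserving a little care is that existence is produced non-constructively, as a limit of empirical measures, so one must check that the limiting object indeed belongs to $\dD(\R_+;\cP(\R_+\times\Theta))$ and that the class of test functions appearing in~\eqref{VVM-eq1} is precisely the one for which~\eqref{VV-eq-m} holds and is stable under the passage to the limit.
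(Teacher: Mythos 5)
Your proposal is correct and matches the paper exactly: the corollary is stated there as an immediate consequence of Theorem~\ref{VVM-th} (existence, since the limit $\mu$ of the empirical measures solves~\eqref{VVM-eq1}) combined with Proposition~\ref{VVM-ex-th} (uniqueness via the total-variation Gronwall estimate). Your additional sketch of the passage to the limit in~\eqref{VV-eq-m} simply recapitulates the paper's own proof of Theorem~\ref{VVM-th} and introduces nothing different.
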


\begin{remark}
As we will see in the proof of Theorem~\ref{VVM-th}, the solution $\mu$ to Equation~\eqref{VVM-eq1} is the law of  a couple $(a(t),\theta(t))_{t\geq 0}$ of random processes  solution to the following nonlinear stochastic differential system
\begin{align*}
\begin{cases}
\displaystyle{a(t)=a_0+t-\int_{0}^{t}\int_{\Theta}\int_{0}^{\infty}a(s^-)\mathds{1}_{\fF(s^-)\gamma(s^-)K(\theta(s^-),\widetilde\theta)\geq z}Q(\rd z,\rd \widetilde\theta,\rd s)}\\[0.3cm]
\displaystyle{
\theta(t)=\theta_0+\int_{0}^{t}\int_{\Theta}\int_{0}^{\infty}\left(\widetilde\theta-\theta(s^-)\right)\mathds{1}_{\fF(s^-)\gamma(s^-)K(\theta(s^-),\widetilde\theta)\geq z}Q(\rd z,\rd\widetilde\theta,\rd s)}\\[0.3cm]
\gamma(t)=\gamma(a(t),\theta(t)),
\end{cases}
\end{align*}
where $(a_0,\theta_0)$ is a random variable with distribution $\mu_0$, $\fF$ is defined by \eqref{eq:def-F}, and $Q$ is a Poisson measure  on $\mathbb{R}_+\times\Theta\times\mathbb{R}_+$ with intensity $\rd z\nu(\rd \widetilde\theta)\rd t$, and independent of $(a_0,\theta_0)$. We also note that $\fF(t)=\E\left[\lambda(a(t),\theta(t))\right]$.
\end{remark}

To prove Theorem~\ref{VVM-th}, we first prove $\cC$-tightness of the sequence $\PAR{\mu^N}_{N\geq 2}$, then identify the limits as a solution to Equation~\eqref{VVM-eq1}. By uniqueness of the solution to Equation~\eqref{VVM-eq1}, we deduce the convergence of $\PAR{\mu^N}_{N\geq 2}$ to $\mu$ on any time interval $[0,T]$.

For the $\cC$-tightness of $\PAR{\mu^N}_{N\geq 2}$, we will  use the following criterion (see \cite[Lemma~$3.1$]{pang2022-CLT-functional}).
\begin{lemma}\label{VVM-Lem-20}
	Let $\PAR{X^N}_{N\geq 2}$ be a sequence of random processes taking values in the Skorohod space $\dD(\dR_+, \dR^d)$ such that $X^N(0)=0$.
	If for all $T>0$, $\ep>0$,
	\begin{align*}
\lim_{\delta\to0}\limsup_{N\to\infty}\sup_{0\le t\le T}\frac{1}{\delta}\P\bigg(\sup_{0\le r\le \delta}|X^N(t+r)-X^N(t)|>\ep\bigg)=0,
	\end{align*}
	then the sequence $X^N$ is $\cC$-tight in $\dD(\dR_+, \dR^d)$.
\end{lemma}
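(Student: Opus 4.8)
\textbf{Proof proposal for Lemma~\ref{VVM-Lem-20}.}

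The plan is to use the standard Aldous–Rebolledo tightness criterion together with the fact that control of oscillations of the increments uniformly in the starting time, at the rate $1/\delta$, is exactly what is needed to rule out jumps in the limit and hence upgrade tightness to $\cC$-tightness. First I would recall that, by the usual characterization of $\cC$-tightness in $\dD(\dR_+,\dR^d)$ (see Billingsley or Jacod–Shiryaev), a sequence $(X^N)$ with $X^N(0)=0$ is $\cC$-tight if and only if it is tight and the maximal jump size over any compact interval converges to $0$ in probability; equivalently, it suffices to check the modulus-of-continuity condition
\[
\lim_{\delta\to 0}\limsup_{N\to\infty}\P\Big(\sup_{\substack{s,t\le T\\ |s-t|\le\delta}}|X^N(t)-X^N(s)|>\ep\Big)=0
\qquad\text{for all }T>0,\ \ep>0,
\]
which automatically forces the limit points to have continuous paths. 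So the whole lemma reduces to deriving this global modulus estimate from the given "local'' one, where the oscillation is only controlled on a window $[t,t+\delta]$ starting from a fixed $t$, but uniformly in $t$ and with the extra $1/\delta$ weight.

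The key step is a covering/union-bound argument. Fix $T>0$ and $\ep>0$. Partition $[0,T]$ into $\lceil T/\delta\rceil$ consecutive intervals of length $\delta$ with endpoints $t_i=i\delta$. Any pair $s,t\le T$ with $|s-t|\le\delta$ lies within two adjacent such blocks, so by the triangle inequality
\[
\sup_{\substack{s,t\le T\\ |s-t|\le\delta}}|X^N(t)-X^N(s)|
\;\le\; 3\max_{0\le i\le \lceil T/\delta\rceil}\ \sup_{0\le r\le\delta}|X^N(t_i+r)-X^N(t_i)|.
\]
Hence, by a union bound,
\[
\P\Big(\sup_{\substack{s,t\le T\\ |s-t|\le\delta}}|X^N(t)-X^N(s)|>3\ep\Big)
\;\le\; \sum_{i=0}^{\lceil T/\delta\rceil}\P\Big(\sup_{0\le r\le\delta}|X^N(t_i+r)-X^N(t_i)|>\ep\Big)
\;\le\; \Big(\tfrac{T}{\delta}+1\Big)\,\sup_{0\le t\le T}\P\Big(\sup_{0\le r\le\delta}|X^N(t+r)-X^N(t)|>\ep\Big).
\]
Now take $\limsup_{N\to\infty}$ and then $\delta\to 0$: the factor $(T/\delta+1)$ is precisely cancelled by the $1/\delta$ appearing in the hypothesis, so the right-hand side tends to $0$. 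This yields the global modulus condition, hence tightness; and since the limiting modulus is controlled, every subsequential limit is supported on $\cC(\dR_+,\dR^d)$, which is the definition of $\cC$-tightness.

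The only mild obstacle is bookkeeping at the level of the compactness criterion: one must also verify the "compact containment'' part, i.e. that for each fixed $t$ the family $\{X^N(t)\}_N$ is tight in $\dR^d$ — but this is immediate from $X^N(0)=0$ and the modulus estimate applied on $[0,t]$ with a fixed $\delta$, which bounds $|X^N(t)|$ uniformly in probability. A second point of care is that the hypothesis is stated with the supremum over $0\le t\le T$ outside the probability and an explicit $1/\delta$ weight; one should be slightly careful that the endpoints $t_i$ of the partition indeed fall in the range over which the sup in the hypothesis is taken (shrinking $T$ to $T+\delta\le 2T$ if necessary causes no harm). Neither point is genuinely difficult; the substance of the lemma is the union bound above, where the $1/\delta$ in the hypothesis is exactly the right normalization to absorb the $O(1/\delta)$ number of blocks.
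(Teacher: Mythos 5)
The paper does not actually prove this lemma; it cites it verbatim from Pang and Pardoux \cite[Lemma~3.1]{pang2022-CLT-functional}, so there is no internal proof to compare against. Your covering/union-bound argument is correct and is precisely the standard proof behind that reference's criterion: the triangle-inequality reduction (with factor $3$) to increments over partition windows of length $\delta$, the union bound producing the $O(T/\delta)$ factor that the $1/\delta$ normalization in the hypothesis is designed to absorb, and the remark that the global modulus estimate combined with $X^N(0)=0$ yields compact containment — hence $\cC$-tightness via the Jacod--Shiryaev characterization — are all sound. The one bookkeeping point you flag is a non-issue: every reference point $t_i$ actually invoked in the union bound satisfies $t_i\le\max(s,t)\le T$ (you never need the partition endpoint past $T$), so the hypothesis's $\sup_{0\le t\le T}$ applies directly without enlarging $T$.
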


\begin{lemma}\label{VVM-lem-tight}
	Under the assumptions of Theorem~\ref{VVM-th}, the sequence $\PAR{\mu ^N}_{N\geq 2}$ defined by \eqref{H-mu} is $\cC$-tight in $\dD\left(\dR_+,\mathcal{P}(\R_+\times\dR^d)\right)$.
\end{lemma}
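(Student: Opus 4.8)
The plan is to apply the tightness criterion of Lemma~\ref{VVM-Lem-20} to suitable real-valued processes obtained by testing $\mu^N$ against a countable separating family of functions, and then lift this to $\cC$-tightness of the measure-valued sequence. First I would fix a test function $f$ on $\R_+\times\R_+\times\Theta$ that is bounded, $\cC^1$ in its first two variables with bounded derivatives, and use the semimartingale decomposition \eqref{VV-eq-m}: writing $X^N_t=\langle\mu^N_t,f_t\rangle$, we have $X^N_t-X^N_0 = \int_0^t\langle\mu^N_s,\partial_af_s+\partial_sf_s\rangle\,\rd s + \int_0^t\langle\mu^N_s,\lambda\rangle\langle\mu^N_s,Rf_s\rangle\,\rd s + M^N_t$, where $M^N$ is the martingale built from the compensated Poisson measures $\overline Q_k$. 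For the increment over $[t,t+r]$ with $r\le\delta$, the two drift terms are bounded in absolute value by $(\NRM{\partial_af+\partial_sf}_\infty + 2\lambda_*\NRM{f}_\infty)\,\delta =: C_f\,\delta$ deterministically, using $\gamma\le 1$, $\lambda\le\lambda_*$, Assumption~\ref{Hyp:Kernel} and $|Rf|\le 2\NRM{f}_\infty$.

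The martingale part is handled by computing its predictable quadratic variation. Since the $Q_k$ are independent Poisson measures with intensity $\rd z\,\nu(\rd\widetilde\theta)\,\rd t$ and the indicator $\mathds 1_{\fF^N(s^-)\gamma^N_k(s^-)K(\theta^N_k(s^-),\widetilde\theta)\ge z}$ integrates in $z$ to $\fF^N(s^-)\gamma^N_k(s^-)K(\theta^N_k(s^-),\widetilde\theta)\le\lambda_*K(\theta^N_k(s^-),\widetilde\theta)$, one gets
\[
\langle M^N\rangle_{t+r}-\langle M^N\rangle_t \le \frac{1}{N^2}\sum_{k=1}^N\int_t^{t+r}\!\!\int_\Theta \big(f_s(0,\widetilde\theta)-f_s(a^N_k(s^-),\theta^N_k(s^-))\big)^2 \lambda_* K(\theta^N_k(s^-),\widetilde\theta)\,\nu(\rd\widetilde\theta)\,\rd s \le \frac{4\lambda_*\NRM{f}_\infty^2}{N}\,\delta,
\]
using $\int_\Theta K(\theta,\widetilde\theta)\nu(\rd\widetilde\theta)=1$. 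Then Doob's $L^2$ inequality gives $\E\big[\sup_{0\le r\le\delta}|M^N_{t+r}-M^N_t|^2\big]\le 4\E[\langle M^N\rangle_{t+\delta}-\langle M^N\rangle_t]\le 16\lambda_*\NRM{f}_\infty^2\delta/N$. Combining the drift bound and Markov's inequality, for $\eps>0$ and $\delta$ small enough that $C_f\delta<\eps/2$,
\[
\frac1\delta\,\P\Big(\sup_{0\le r\le\delta}|X^N_{t+r}-X^N_t|>\eps\Big) \le \frac1\delta\,\P\Big(\sup_{0\le r\le\delta}|M^N_{t+r}-M^N_t|>\eps/2\Big) \le \frac{4}{\delta\eps^2}\cdot\frac{16\lambda_*\NRM{f}_\infty^2\delta}{N} = \frac{64\lambda_*\NRM{f}_\infty^2}{\eps^2 N},
\]
uniformly in $t\le T$, which tends to $0$ as $N\to\infty$; hence Lemma~\ref{VVM-Lem-20} applies and each $(\langle\mu^N_\cdot,f_\cdot\rangle)_{N}$ is $\cC$-tight in $\dD(\R_+,\R)$. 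For time-independent $f$ (i.e. $f_t\equiv f$) this covers $(\langle\mu^N_\cdot,f\rangle)_N$ directly, and the $\cC$-tightness (continuity of limit points) is automatic since the jumps of $\langle\mu^N_\cdot,f\rangle$ are of size $\le 2\NRM{f}_\infty/N\to 0$.

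Finally I would upgrade to $\cC$-tightness in $\dD(\R_+,\cP(\R_+\times\Theta))$. Since $\R_+\times\Theta$ (with $\Theta\subset\R^d$) is a separable metric space, $\cP(\R_+\times\Theta)$ with the weak topology is metrizable and one can use a Jakubowski-type criterion: a sequence in $\dD(\R_+,\cP(E))$ is $\cC$-tight provided (i) a compact containment condition holds, and (ii) $(\langle\mu^N_\cdot,f\rangle)_N$ is $\cC$-tight in $\dD(\R_+,\R)$ for every $f$ in a countable family that separates points of $\cP(E)$ and is closed under the relevant operations. Condition (ii) is exactly what was established above (the class of bounded $\cC^1$, bounded-derivative functions contains such a separating family, e.g. by a density/mollification argument). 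For the compact containment condition (i), the key observation is that the age marginal is controlled: $a^N_k(t)\le a^N_k(0)+t$, so for any $T$ the ages stay in $[0, \max_k a^N_k(0)+T]$, and combined with tightness of $\mu_0$ (i.i.d. sampling from a fixed probability measure) and uniform integrability of the age at the initial time one obtains, for each $T,\eta>0$, a compact $\mathcal K\subset\cP(\R_+\times\Theta)$ with $\P(\mu^N_t\in\mathcal K\ \forall t\le T)\ge 1-\eta$ for all $N$. The main obstacle is precisely this compact containment step: because $\R_+$ is non-compact one must genuinely use the linear-growth bound on the ages together with tightness of the initial law to confine the age marginals, and one must check the remaining marginal (the trait) is contained using that $\theta^N_k(t)$ takes values in the support of $\mu_0$ together with the values produced by the kernel $K$. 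Once (i) and (ii) are in hand, the cited criterion yields $\cC$-tightness of $(\mu^N)_{N\ge 2}$, completing the proof.
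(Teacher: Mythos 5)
The martingale/drift computation in your proposal is essentially the one in the paper: same semimartingale decomposition, same bounds on the drift using $\gamma\le 1$, $\lambda\le\lambda_*$ and Assumption~\ref{Hyp:Kernel}, same $O(\delta/N)$ bound on the second moment of the martingale increment (the paper computes it via the $L^2$-isometry for the compensated Poisson integrals and exchangeability, while you go through the predictable quadratic variation — same estimate). Where you genuinely diverge from the paper is in the passage from scalar tightness to tightness of the measure-valued sequence. The paper invokes the Roelly--Coppoletta criterion \cite[Theorem~2.1]{roelly-coppolettaCriterionConvergenceMeasurevalued1986}: because $\dR_+\times\dR^d$ is a locally compact Polish space, it suffices to verify tightness of $(\langle\mu^N_\cdot,f\rangle)_N$ in $\dD([0,T],\dR)$ for $f$ ranging over a dense subset of $\cC_0(\dR_+\times\dR^d)$ (the constant-function check being trivial since $\langle\mu^N_t,1\rangle\equiv 1$). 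This criterion does \emph{not} require a separate compact containment step, which is precisely what makes it the convenient tool here. You instead propose a Jakubowski-type argument, which does require a compact containment condition.

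That compact containment step is where your proposal has a genuine gap. For the age marginal you use $a^N_k(t)\leq a^N_k(0)+T$, and this can indeed be made to work with the law of large numbers applied to $\frac1N\sum_k\indic{a^N_k(0)>M}$; but your treatment of the trait marginal — ``$\theta^N_k(t)$ takes values in the support of $\mu_0$ together with the values produced by the kernel $K$'' — is not a tightness argument. Under the hypotheses of Theorem~\ref{VVM-th}, the only assumption on $K$ is Assumption~\ref{Hyp:Kernel}, namely $\int_\Theta K(\theta,\widetilde\theta)\nu(\rd\widetilde\theta)=1$. With no control on the tails of the probability measures $K(\theta,\cdot)\nu(\rd\cdot)$ uniformly in $\theta$, the family of these kernels is in general \emph{not} uniformly tight (e.g.\ on $\Theta=\dR$ with a Gaussian reference measure $\nu$, one can choose $K$ so that $K(\theta,\cdot)\nu(\rd\cdot)=\cN(\theta,1)$, and then the trait can drift arbitrarily far after a few jumps, depending on the initial law's support). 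Hence a bound such as $\sup_{N,\,t\le T}\mu^N_t(\dR_+\times B^c)$ being small for a fixed compact $B\subset\Theta$ is not available from the stated hypotheses, and your route as sketched cannot close. You would need to either import the extra Assumption~\ref{Hyp:endemicity}--\eqref{Hyp:kernel in L^1} (which does give a uniform dominating density $\sup_{\widetilde\theta}K(\widetilde\theta,\cdot)\in L^1(\nu)$ and hence uniform tightness of the post-jump trait laws), or simply use the Roelly--Coppoletta criterion as the paper does and avoid the compact containment step entirely.
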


\begin{proof}
Let $\cC_0(\dR_+\times\dR^d)$ be the space of continuous real-valued function converging to $0$ at infinity, with the uniform norm $\NRM{f}_\infty=\sup_{a\geq 0, \theta\in\dR^d}\ABS{f(a,\theta)}$.
Fix $T>0$.
	From \cite[Theorem $2.1$]{roelly-coppolettaCriterionConvergenceMeasurevalued1986}, it suffices to prove that $\langle\mu ^N,f\rangle$ is tight in $\dD([0,T], \dR)$ for any $f\in\cC_0(\dR_+\times\dR^d)$ with a derivative with respect to its first variable and $\NRM{\partial_af}_{\infty}<+\infty$, which is a dense family in $\cC_0(\dR_+\times\dR^d)$.  
	
	Since $f$ and $\partial_af$ are bounded, we have  for all $0\leq s\leq t\leq T$ 
	\begin{multline*}
	\left|\langle\mu ^N_t,f\rangle-\langle\mu ^N_s,f\rangle\right|\\
	\begin{aligned}
	&\leq\int_s^t \langle\mu ^N_r,\partial_af\rangle  \rd r\\&\hspace{1cm}+\frac{1}{N}\sum_{k=1}^N\int_{s}^{t}\int_{\Theta}\int_{0}^{\infty}\left|f(0,\widetilde\theta)-f(a^N_k(r^-),\theta^N_k(r^-))\right|\mathds{1}_{\fF^N(r^-)\gamma^N_k(r^-)K(\theta^N_k(r^-),\widetilde\theta)\geq z}Q_k\PAR{\rd z,\rd\widetilde \theta,\rd r}\\
	&\leq\|\partial_af\|_\infty|t-s|+\frac{2\|f\|_\infty}{N}\sum_{k=1}^N\int_{s}^{t}\int_{\Theta}\int_{0}^{\infty}\mathds{1}_{\fF^N(r^-)\gamma^N_k(r^-)K(\theta^N_k(r^-),\widetilde\theta)\geq z}Q_k\PAR{\rd z,\rd\widetilde \theta,\rd r}\\
	&\leq\left(\|\partial_af\|_\infty+2\lambda_*\|f\|_\infty\right)|t-s|+\frac{2\|f\|_\infty}{N}\sum_{k=1}^N\int_{s}^{t}\int_{\Theta}\int_{0}^{\infty}\mathds{1}_{\fF^N(r^-)\gamma^N_k(r^-)K(\theta^N_k(r^-),\widetilde\theta)\geq z}\overline{Q}_k\PAR{\rd z,\rd\widetilde \theta,\rd r},
	\end{aligned}
	\end{multline*}
	where the last line follows from Assumption~\ref{Hyp:Kernel} and $\fF^N(r^-)\leq\lambda_*,\,\gamma\leq1$, and where $\overline{Q}_k$ is the compensated Poisson measure of $Q_k$.
	
	Hence for $\delta$ small enough that $\left(\|\partial_af\|_\infty+2\lambda_*\|f\|_\infty\right)\delta\leq\frac{\epsilon}{2}$, and by Doob's maximal inequality, we have
	\begin{multline*}
		\mathbb{P}\left(\sup_{0\le v\le \delta}\left|\langle\mu ^N_{t+v},f\rangle-\langle\mu ^N_t,f\rangle\right|\geq\epsilon\right)\\
		\begin{aligned}
&\leq	\mathbb{P}\left(\sup_{0\le v\le \delta}	\left|\frac{1}{N}\sum_{k=1}^N\int_{t}^{t+v}\int_{\Theta}\int_{0}^{\infty}\mathds{1}_{\fF^N(r^-)\gamma^N_k(r^-)K(\theta^N_k(r^-),\widetilde\theta)\geq z}\overline{Q}_k\PAR{\rd z,\rd\widetilde \theta,\rd r}\right|\geq\frac{\epsilon}{4\|f\|_\infty}\right)\\
&\leq
\frac{16\|f\|_\infty^2}{\epsilon^2}\E\left[\left(\frac{1}{N}\sum_{k=1}^N\int_{t}^{t+\delta}\int_{\Theta}\int_{0}^{\infty}\mathds{1}_{\fF^N(r^-)\gamma^N_k(r^-)K(\theta^N_k(r^-),\widetilde\theta)\geq z}\overline{Q}_k\PAR{\rd z,\rd\widetilde \theta,\rd r}\right)^2\right]\\
		&=
\frac{16\|f\|_\infty^2}{N\epsilon^2}\E\left[\left(\int_{t}^{t+\delta}\int_{\Theta}\int_{0}^{\infty}\mathds{1}_{\fF^N(r^-)\gamma^N_1(r^-)K(\theta^N_1(r^-),\widetilde\theta)\geq z}\overline{Q}_1\PAR{\rd z,\rd\widetilde \theta,\rd r}\right)^2\right]\\
		&=
\frac{16\|f\|_\infty^2}{N\epsilon^2}\int_{t}^{t+\delta}\int_{\Theta}\E\left[\fF^N(r^-)\gamma^N_1(r^-)K(\theta^N_1(r),\theta)\right]\nu(\rd\theta)\rd r\\
		&\leq
\frac{16\|f\|_\infty^2\lambda_*}{N\epsilon^2}\delta,
		\end{aligned}
	\end{multline*}
	where the third line follows from the orthogonality since $(Q_k)_{k\geq 1}$ are independent and the exchangeability of the family of processes $(a^N_k,\theta^N_k,Q_k)_{k\geq 1}$, and the last line follows from the upper bounds of $\lambda,\gamma$ and from Assumption~\ref{Hyp:Kernel}.
We then deduce that
\begin{equation*}
	\lim_{\delta\to0}\limsup_{N}\sup_{0\le t\le T}\frac{1}{\delta}\mathbb{P}\left(\sup_{0\le v\le \delta}\left|\langle\mu ^N_{t+v},f\rangle-\langle\mu ^N_t,f\rangle\right|\geq\epsilon\right)=0,
\end{equation*}
and  $\PAR{\langle\mu ^N_t,f\rangle}_{N\geq 2}$ is $\mathcal{C}$-tight by Lemma~\ref{VVM-Lem-20}. 
\end{proof}

Let $f$ be a bounded measurable function defined on $\R_+\times\R_+\times\dR^d$ with values in $\R$. We introduce
\begin{equation}
	\cZ^Nf(t)=\frac{1}{N}\sum_{k=1}^N\int_{0}^{t}\int_{\Theta}\int_{0}^{\infty}\left(f_{s}(0,\widetilde\theta)-f_s(a^N_k(s^-),\theta^N_k(s^-))\right)\mathds{1}_{\fF^N(s^-)\gamma^N_k(s^-)K(\theta^N_k(s^-),\widetilde\theta)\geq z}\overline{Q}_k\PAR{\rd z,\rd\widetilde \theta,\rd s},
\end{equation}
where $\PAR{\overline{Q}_k}_{1\leq k\leq N}$ are the compensated Poisson measures of $(Q_k)_{1\leq k\leq N}$.

\begin{lemma}\label{VVM-lem-M}
	For all $0\leq t\leq T$, for all bounded functions $f$, as $N\to\infty$,
	\begin{equation}
	\E\left[\sup_{0\leq t\leq T}\left(\cZ^Nf(t)\right)^2\right]\to0.
	\end{equation}
\end{lemma}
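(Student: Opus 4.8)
The plan is to bound $\E\big[\sup_{0\le t\le T}(\cZ^N f(t))^2\big]$ using Doob's $L^2$ maximal inequality, since $\cZ^N f$ is a càdlàg $L^2$-martingale: indeed it is a sum of stochastic integrals against the compensated Poisson measures $\bar Q_k$, and the indicator integrands are bounded (the rate $\fF^N(s^-)\gamma^N_k(s^-)K(\theta^N_k(s^-),\wt\theta)$ is integrable in $\wt\theta$ against $\nu$ by Assumption~\ref{Hyp:Kernel}, with total mass $\le\lambda_*$), so all the requisite integrability holds. First I would write, by Doob,
\[
\E\Big[\sup_{0\le t\le T}(\cZ^N f(t))^2\Big]\le 4\,\E\big[(\cZ^N f(T))^2\big].
\]
Then I would compute the predictable quadratic variation. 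Because the $Q_k$ are independent, the cross terms vanish, and by the exchangeability of the family $(a^N_k,\theta^N_k,Q_k)_{1\le k\le N}$ the $N$ diagonal terms are equal, giving
\[
\E\big[(\cZ^N f(T))^2\big]
=\frac{1}{N}\,\E\!\left[\int_0^T\!\!\int_\Theta\!\big(f_s(0,\wt\theta)-f_s(a^N_1(s),\theta^N_1(s))\big)^2\fF^N(s)\gamma^N_1(s)K(\theta^N_1(s),\wt\theta)\,\nu(\rd\wt\theta)\,\rd s\right].
\]

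Next I would bound the integrand crudely: $\big(f_s(0,\wt\theta)-f_s(a^N_1(s),\theta^N_1(s))\big)^2\le 4\NRM{f}_\infty^2$, and $\fF^N(s)\gamma^N_1(s)\le\lambda_*$ since $\fF^N\le\lambda_*$ and $\gamma\le 1$; integrating the kernel against $\nu$ gives mass $1$ by Assumption~\ref{Hyp:Kernel}. Hence
\[
\E\big[(\cZ^N f(T))^2\big]\le\frac{4\NRM{f}_\infty^2\lambda_* T}{N},
\]
so $\E\big[\sup_{0\le t\le T}(\cZ^N f(t))^2\big]\le\frac{16\NRM{f}_\infty^2\lambda_* T}{N}\to 0$ as $N\to\infty$, which is exactly the claim. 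This is essentially the same computation already carried out in the tightness proof (Lemma~\ref{VVM-lem-tight}), only with the extra factor $\big(f_s(0,\wt\theta)-f_s(a^N_1,\theta^N_1)\big)^2$ in place of $1$.

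There is no serious obstacle here; the statement is a soft consequence of the martingale structure and the uniform bounds on $\lambda$, $\gamma$ and $K$. The only point requiring a little care is the justification that $\cZ^N f$ is genuinely a square-integrable martingale (so that Doob's inequality and the Itô isometry for Poisson integrals apply), which follows because the integrand is bounded and the compensator of $Q_k$ restricted to the relevant region has finite total intensity $\le\lambda_* T$ on $[0,T]$; one may also first prove the bound for $f$ continuous and bounded and then note that only $\NRM{f}_\infty$ enters, so no regularity of $f$ is actually used. A minor subtlety is that $f$ is only assumed bounded and measurable (not continuous), but since the final bound depends on $f$ only through $\NRM{f}_\infty$, this causes no difficulty.
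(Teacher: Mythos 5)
Your proof is correct and follows essentially the same route as the paper: the paper invokes the Burkholder--Davis--Gundy inequality where you invoke Doob's $L^2$ maximal inequality followed by the Itô isometry, but for $p=2$ these yield the identical estimate (with the same constant $4$), and both then use independence/orthogonality of the $\overline{Q}_k$, exchangeability to reduce to $k=1$, and the uniform bounds $\|f\|_\infty$, $\fF^N\le\lambda_*$, $\gamma\le 1$, $\int_\Theta K(\cdot,\wt\theta)\,\nu(\rd\wt\theta)=1$ to obtain $16\|f\|_\infty^2\lambda_* T/N$. The only (harmless) addition on your part is the explicit remark justifying square-integrability of the martingale, which the paper takes for granted.
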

\begin{proof}
	Note that $\PAR{\cZ^Nf(t)}_{t\geq 0}$ is a martingale. First by the Burkholder-Davis-Gundy inequality, and since the family $(Q_k)_{k\geq 1}$ is independent, the family $(\overline{Q}_k)_{k\geq 1}$ is orthogonal, it follows that
	\begin{multline*}
		\E\left[\sup_{0\leq t\leq T}\left(\cZ^Nf(t)\right)^2\right]
		\\
		\begin{aligned}
		&\leq\frac{4}{N^2}\sum_{k=1}^N\int_{0}^{T}\int_{\Theta}\E\left[\left(f_{s}(0,\theta)-f_s(a^N_k(s),\theta^N_k(s))\right)^2\fF^N(s)\gamma^N_k(s)K(\theta^N_k(s),\theta)\right]\nu(\rd\theta)\rd s\\
		&=\frac{4}{N}\int_{0}^{T}\int_{\Theta}\E\left[\left(f_{s}(0,\theta)-f_s(a^N_1(s),\theta^N_1(s))\right)^2\fF^N(s)\gamma^N_1(s)K(\theta^N_1(s),\theta)\right]\nu(\rd\theta)\rd s\\
		&\leq\frac{16\|f\|_\infty^2T\lambda_*}{N},
		\end{aligned}
	\end{multline*}
	where the third line follows from exchangeability and the last line from the upper bound conditions on $\lambda,\gamma$ and from Assumption~\ref{Hyp:Kernel}.
	This concludes the proof. 
\end{proof}

We deduce from \eqref{VV-eq-m} that the limit of any converging subsequence of $(\mu^N)$ satisfies Equation~\eqref{VVM-eq1}, and by the uniqueness stated in Proposition~\ref{VVM-ex-th}, Theorem~\ref{VVM-th} is proved.

\section{Long time behavior}\label{sec-proofs-endemic-v}

Assume that $\mu_0$ has a density $u_0$ with respect to $\rd a \nu(\rd \theta)$ on $\dR_+\times \Theta$.
Let us recall by Proposition~\ref{AP-density} that the limit measure $\mu_t$ has a density $u_t$ solution to \eqref{VVL-eq-1}, and by the method of characteristics, it is given by \begin{equation*}\label{VVL-eq-2}
u_t(a,\theta)=\begin{cases}
u_0(a-t,\theta)\exp\left(-\int_{0}^{t}\fF(s)\gamma(a-t+s,\theta)\rd s \right)&\mbox{ if }a> t\\
\fF(t-a)\fS (t-a,\theta)\exp\left(-\int_{t-a}^{t}\fF(s)\gamma(s-t+a,\theta)\rd s \right)&\mbox{ if }a\leq t,
\end{cases}
\end{equation*}
where $\fF$ and $\fS $ are respectively defined by \eqref{eq:def-F} and \eqref{eq:def-S}.

\subsection{Existence of a stationary measure}\label{sec-proofs-endemic-v-ex}

Adapting the proof of \cite{calsina2013steady} to study the equilibria of \eqref{VVM-eq1}, we prove Theorem~\ref{thm:existence-endemicity} in this section. We assume throughout this section that  Assumptions~\ref{Hyp:Kernel} and \ref{Hyp:endemicity} are satisfied. \\
If it exists, a stationary solution $u_*$ of \eqref{VVL-eq-1} is a solution the following system:
\begin{equation}\label{VVM-eq-st}
\begin{cases}
u_*(a,\theta)=\fF_*\fS _*(\theta)\exp\left(-\fF_*\int_{0}^{a}\gamma(s,\theta)\rd s\right)
\\[0.3cm]
\fF_*=\int_{\R_+\times\Theta}\lambda(a,\theta)u_*(a,\theta)\rd a\nu(\rd\theta)\\[0.3cm]
\fS _*(\theta)=\int_{\R_+\times\Theta}\gamma(a,\widetilde\theta)K(\widetilde\theta,\theta)u_*(a,\widetilde\theta)\rd a\nu(\rd\widetilde{\theta}).
\end{cases}
\end{equation}

Obviously $u_*\equiv 0$ is solution to \eqref{VVM-eq-st}. We are looking for a criterion for the existence of probability density solutions on $\dR_+\times\Theta$ to \eqref{VVM-eq-st}, i.e., such that
\begin{equation}\label{eq:u*-density}
\int_0^\infty\int_\Theta u_*(a,\theta)\rd a\nu(\rd \theta)=1.
\end{equation}
To achieve this, we study the existence of a non-negative solution $\left(x,\fS \right)$, denoted  by $\left(\fF_*,\fS _*\right)$, to the following system: 

    \begin{numcases}{}
        x\int_{\R_+\times\Theta}\fS (\theta)\exp\left(-x\int_0^a\gamma(s,\theta)\rd s\right)\rd a\nu(\rd\theta)=1\label{VVM-eq-st-1}\\
x=x\int_{\R_+\times\Theta}\lambda(a,\theta)\fS (\theta)\exp\left(-x\int_0^a\gamma(s,\theta)\rd s\right)\rd a\nu(\rd\theta)\label{VVM-eq-st-2'}\\
      \fS (\theta)=x\int_{\R_+\times\Theta}\gamma(a,\widetilde\theta)K(\widetilde\theta,\theta)\fS (\widetilde\theta)\exp\left(-x\int_0^a\gamma(s,\widetilde\theta)\rd s\right)\rd a\nu(\rd\widetilde\theta),  \label{VVM-eq-st-2}
    \end{numcases}
    where the first equation is related to \eqref {eq:u*-density} to ensure that $u_*$ is a probability density function, and the two other equations come from the boundary conditions of \eqref{VVM-eq-st}.

By  Assumption~\ref{Hyp:endemicity}-\eqref{Hyp:mean-susceptibility}, we first note that $\int_{0}^{\infty}\gamma(a,\cdot)\rd a=\infty$ $\nu$-a.e, and then
for any $x>0$,
\[
x\int_{\R_+}\gamma(a,\cdot)\exp\left(-x\int_{0}^{a}\gamma(s,\cdot)\rd s\right)\rd a=1\quad \nu\text{-a.e}.
\]
Consequently, Equation~\eqref{VVM-eq-st-2} becomes 
\[
 \fS (\theta)=\int_{\Theta}K(\widetilde\theta,\theta)\fS (\widetilde\theta)\nu(\rd\widetilde\theta).
\]
    
We introduce the linear operator $T$ from $L^1(\nu)$ to $L^1(\nu)$ defined by 
\begin{equation}\label{VV-op-T}
T(B)(\theta)= \int_{\Theta}K(\widetilde\theta,\theta)B(\widetilde\theta)\nu(\rd\widetilde{\theta}).
\end{equation}
\begin{remark}\label{rk:spectreT}
We easily observe, by Assumption~\ref{Hyp:Kernel}, $\forall B\in L^1(\nu)$, 
\begin{equation*}\label{VV-op-T-a}
\NRM{T(B)}_{L^1(\nu)}\leq \NRM{B}_{L^1(\nu)}\quad \text{ and }\quad
    \int_\Theta T(B)(\theta)\nu(\rd\theta)=\int_\Theta B(\theta)\nu(\rd\theta).
\end{equation*}
 It follows that the spectral radius $\rho(T)$ of $T$ is smaller than $1$ and the only possible eigenvalue with an integrable nonnegative eigenfunction is $1$.
Moreover, for eigenvalues different from $1$, the associated eigenfunctions $B$ satisfy
 $\displaystyle{\int_\Theta B(\theta)\nu(\rd\theta)=0}$.
 \end{remark}

Recalling Equation~\eqref{VVM-eq-st-2}, we are looking for an integrable nonnegative solution $\fS $ of $ \fS =T(\fS )$. 
We now mention the following spectral property of integral operators.

\begin{theorem}{\cite[Theorem~$6.6$, Chapter 5]{schaefer1974banach}}\label{thm:spectreT}
	Let $(\Theta,\cH,\nu)$ is a $\sigma$-finite measure space and $E:=L^p(\nu),$ where $1\leq p\leq\infty$. Let $T$ be an integral linear operator on $E$ given by a measurable kernel $K\geq0$ fulfilling the two conditions
	\begin{enumerate}
		\item some power of $T$ is compact;
		\item\label{Ac-th-3} for any $S\in\cH$ such that $\nu(S)>0$, 
  and $\nu(\Theta\setminus S)>0$ 
		\[\int_{\Theta\setminus S}\int_S K(\widetilde\theta,\theta)\nu(\rd\theta)\nu(\rd\widetilde\theta)>0.\]
	\end{enumerate}
Then the spectral radius $\rho(T)$ of $T$ is a strictly positive eigenvalue, 
and it admits a unique normalized eigenfunction $\fS $ satisfying $\fS (.)>0$ $\nu$-a.e.
Moreover, if $K>0$ $\nu\otimes\nu$-a.e., then any eigenvalue $\kappa$ of $T$ different from $\rho(T)$ has modulus $|\kappa|<\rho(T)$.    
\end{theorem}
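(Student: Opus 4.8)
The statement is a Krein--Rutman / Perron--Frobenius theorem for the positive operator $T$, and the plan is to reduce it to the general theory of irreducible positive operators on Banach lattices. First I would record the lattice-theoretic setup: $E=L^p(\nu)$ is a Banach lattice, and since $K\geq 0$ the operator $T$ is positive (it maps the positive cone $E_+$ into itself), and it is nonzero by the second hypothesis. Because $\nu$ is $\sigma$-finite, every closed ideal of $L^p(\nu)$ has the form $I_A=\{f\in E:\ f=0\ \nu\text{-a.e. on }A\}$ for some $A\in\cH$, and a direct computation with the kernel shows that $TI_A\subseteq I_A$ if and only if $\int_A\int_{\Theta\setminus A}K(\widetilde\theta,\theta)\,\nu(\rd\widetilde\theta)\,\nu(\rd\theta)=0$. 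By Fubini this is exactly the negation, applied to $A$, of the second hypothesis of the theorem, so that hypothesis says precisely that $T$ has no nontrivial closed invariant ideal; that is, $T$ is \emph{irreducible}.

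Next I would feed this into the standard machinery. Since some power $T^m$ is compact, the Riesz--Schauder theory applied to $T^m$ shows that the nonzero spectrum of $T$ consists of poles of the resolvent of finite algebraic multiplicity; in particular, once we know it is nonzero, $\rho(T)$ is such a pole. That $\rho(T)>0$ is the content of de Pagter's theorem, in the form valid for irreducible positive operators having a compact power. The Krein--Rutman theorem then produces an eigenfunction of $T$ in $E_+$ for the eigenvalue $\rho(T)$, and the classical Perron--Frobenius theory of irreducible positive operators on Banach lattices (Schaefer; Niiro--Sawashima; de Pagter) yields that $\rho(T)$ is a first-order pole, that its eigenspace is one-dimensional and spanned by a quasi-interior point of $E_+$ --- which in $L^p(\nu)$ means a $\nu$-a.e.\ strictly positive function $\fS$ --- and that the adjoint $T'$ has a strictly positive eigenfunctional $\phi$ with $T'\phi=\rho(T)\phi$. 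Normalizing $\fS$ gives the unique normalized positive eigenfunction: any positive eigenfunction for $\rho(T)$ lies in the one-dimensional eigenspace, hence is a positive multiple of $\fS$.

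Finally, for the last claim I would argue directly. Assume $K>0$ $\nu\otimes\nu$-a.e., let $\kappa$ be an eigenvalue of $T$ with $|\kappa|=\rho(T)$, and take $f\neq 0$ with $Tf=\kappa f$. Then $\rho(T)|f|=|Tf|\leq T|f|$ pointwise; pairing with the strictly positive $\phi$ and using $T'\phi=\rho(T)\phi$ gives $\rho(T)\langle\phi,|f|\rangle\leq\langle\phi,T|f|\rangle=\rho(T)\langle\phi,|f|\rangle$, so the inequality must be a $\nu$-a.e.\ equality: $T|f|=\rho(T)|f|$. Hence $|f|$ is a positive eigenfunction for $\rho(T)$, so $|f|=c\,\fS>0$ $\nu$-a.e.; write $f=|f|\,h$ with $|h|=1$ $\nu$-a.e. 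The eigenvalue equation $\kappa|f|(\theta)h(\theta)=\int_\Theta K(\widetilde\theta,\theta)|f|(\widetilde\theta)h(\widetilde\theta)\,\nu(\rd\widetilde\theta)$ together with $\rho(T)|f|(\theta)=\int_\Theta K(\widetilde\theta,\theta)|f|(\widetilde\theta)\,\nu(\rd\widetilde\theta)$ forces equality in the triangle inequality for the integral; since $K(\widetilde\theta,\theta)|f|(\widetilde\theta)>0$ for $\nu$-a.e.\ $\widetilde\theta$, the integrand has $\nu$-a.e.\ constant argument in $\widetilde\theta$, and as $h$ does not depend on $\theta$ this forces $h\equiv\omega$ for some unimodular constant $\omega$. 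Then $\kappa\,\omega|f|=T(\omega|f|)=\omega\,\rho(T)|f|$, whence $\kappa=\rho(T)$, and the peripheral spectrum of $T$ reduces to $\{\rho(T)\}$. The main obstacle is not in these reductions but in the two black-box inputs: de Pagter's theorem that a nonzero irreducible positive operator with a compact power has \emph{strictly positive} spectral radius (this is genuinely delicate and fails without irreducibility), and the algebraic simplicity of $\rho(T)$ together with the existence of the quasi-interior eigenvector and the strictly positive dual eigenfunctional; the ideal/irreducibility dictionary of the first step and the peripheral-spectrum computation of the last step are comparatively routine once those are available.
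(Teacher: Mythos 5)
This is a theorem the paper quotes verbatim from Schaefer's monograph and does not prove: the citation \cite[Theorem~6.6, Chapter~5]{schaefer1974banach} is the entire ``proof'' in the source. Your reconstruction is mathematically sound as a sketch and follows the now-standard Banach-lattice route: your translation of hypothesis (2) into the nonexistence of a nontrivial closed $T$-invariant ideal $I_A=\{f: f=0$ $\nu$-a.e.\ on $A\}$ is exactly right (the two index orderings match because the hypothesis quantifies over all $S$, so it is symmetric under $S\mapsto\Theta\setminus S$), and the peripheral-spectrum argument you carry out in detail --- $\rho(T)|f|\leq T|f|$, equality forced by pairing with the strictly positive dual eigenfunctional, $|f|=c\,\fS>0$, then rigidity in the triangle inequality using $K>0$ and the fact that $h=f/|f|$ does not depend on $\theta$ --- is a correct and complete proof of the last assertion, given the earlier conclusions. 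The one caveat worth recording is that your two black boxes, ``de Pagter's theorem'' for $\rho(T)>0$ and the Niiro--Sawashima/de Pagter simplicity results, postdate Schaefer's 1974 book; Schaefer's own V.6.6 obtains positivity of the spectral radius and the simple, strictly positive eigenvector by a more self-contained Jentzsch-type argument tailored to kernel operators, so your route is a modernized and more modular proof of the same statement rather than a reproduction of the cited one. You should also be careful that de Pagter's theorem in its original form is stated for compact irreducible operators; the version you invoke, for operators merely having a compact power, requires a separate (available but nontrivial) extension, and for $p=\infty$ the existence of the strictly positive dual eigenfunctional in $(L^\infty)'$ needs extra care since $(L^\infty)'\neq L^1$ --- neither point is an error, but both deserve a pointer if this sketch were to be fleshed out.
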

From Remark~\ref{rk:spectreT} and Theorem \ref{thm:spectreT}, we easily deduce the following proposition.
\begin{prop}
    \label{VVM-th-F-propre}
    Under Assumptions~\ref{Hyp:Kernel}-\ref{Hyp:endemicity}, the spectral radius of the operator $T$ defined by \eqref{VV-op-T} is $\rho(T)=1$. There is a unique eigenfunction $\fS _*$, positive $\nu$-a.e and in $L^1(\nu)$, associated to the eigenvalue $1$, such that
\begin{equation}\label{eq:normalization-eigenfunction}
\int_{\R_+\times\Theta}\lambda(a,\theta)\fS _*(\theta)\rd a\nu(\rd\theta)=1.
  \end{equation}
\end{prop}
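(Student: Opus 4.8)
The plan is to apply Theorem~\ref{thm:spectreT} to the operator $T$ defined in \eqref{VV-op-T} and then combine its conclusions with Remark~\ref{rk:spectreT}. First I would verify the two hypotheses of Theorem~\ref{thm:spectreT}. For the compactness of some power of $T$: by Assumption~\ref{Hyp:endemicity}-\eqref{Hyp:kernel in L^1}, the function $\theta\mapsto\sup_{\widetilde\theta\in\Theta}K(\widetilde\theta,\theta)$ lies in $L^1(\nu)$, so $T$ maps the unit ball of $L^1(\nu)$ into a set dominated by this fixed integrable function; a Vitali/uniform-integrability argument (or the Dunford--Pettis theorem) then shows $T$ itself is weakly compact, and since $K>0$ on $\Theta^2$ one checks that a suitable iterate $T^n$ has a kernel that is a genuine (bounded-times-integrable) product, hence $T^n$ is compact on $L^1(\nu)$. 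For hypothesis~\eqref{Ac-th-3}: since $K>0$ $\nu\otimes\nu$-a.e.\ (Assumption~\ref{Hyp:endemicity}-\eqref{Hyp:kernel in L^1}), for any measurable $S$ with $\nu(S)>0$ and $\nu(\Theta\setminus S)>0$ the double integral $\int_{\Theta\setminus S}\int_S K(\widetilde\theta,\theta)\,\nu(\rd\theta)\nu(\rd\widetilde\theta)$ is an integral of a positive function over a set of positive $\nu\otimes\nu$-measure, hence strictly positive. The irreducibility condition is therefore satisfied, and the last hypothesis $K>0$ $\nu\otimes\nu$-a.e.\ in Theorem~\ref{thm:spectreT} holds as well.

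Having checked the hypotheses, Theorem~\ref{thm:spectreT} gives that $\rho(T)$ is a strictly positive eigenvalue with a unique normalized eigenfunction $\fS$ that is positive $\nu$-a.e., and that every other eigenvalue has strictly smaller modulus. It remains to identify $\rho(T)$. From Remark~\ref{rk:spectreT}, $\NRM{T(B)}_{L^1(\nu)}\leq\NRM{B}_{L^1(\nu)}$ for all $B\in L^1(\nu)$, so $\rho(T)\leq1$; and applying this to the positive eigenfunction $\fS$ of eigenvalue $\rho(T)$ together with the conservation identity $\int_\Theta T(B)\rd\nu=\int_\Theta B\rd\nu$ (which uses Assumption~\ref{Hyp:Kernel}) forces $\rho(T)\int_\Theta\fS\rd\nu=\int_\Theta\fS\rd\nu$ with $\int_\Theta\fS\rd\nu>0$, hence $\rho(T)=1$. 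So $1$ is an eigenvalue with a unique normalized positive eigenfunction, which I rescale: since $\lambda$ is not $\nu\otimes\rd a$-a.e.\ zero on the support of $\fS$ (this follows because $\fS>0$ $\nu$-a.e.\ and $\int_{\R_+\times\Theta}\lambda\,\rd a\,\nu>0$; if $R_0=0$ the endemic threshold is vacuous), the quantity $\int_{\R_+\times\Theta}\lambda(a,\theta)\fS(\theta)\rd a\nu(\rd\theta)$ is a positive finite number, and dividing $\fS$ by it yields the unique eigenfunction $\fS_*$ satisfying the normalization \eqref{eq:normalization-eigenfunction}. Uniqueness of $\fS_*$ with the two stated properties is inherited from the uniqueness of the normalized eigenfunction in Theorem~\ref{thm:spectreT}.

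The main obstacle I anticipate is the compactness verification: Theorem~\ref{thm:spectreT} only requires \emph{some} power of $T$ to be compact, and on $L^1(\nu)$ an integral operator with merely integrable kernel need not be compact, so one must use the extra structure provided by Assumption~\ref{Hyp:endemicity}-\eqref{Hyp:kernel in L^1}, namely the uniform (in $\widetilde\theta$) domination of $K(\widetilde\theta,\cdot)$ by an $L^1$ function, to obtain weak compactness via Dunford--Pettis and then deduce norm-compactness of an iterate. Everything else --- the identification $\rho(T)=1$, the positivity and uniqueness of the eigenfunction, and the rescaling --- is routine given Remark~\ref{rk:spectreT} and Theorem~\ref{thm:spectreT}.
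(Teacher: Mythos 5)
Your overall plan matches the paper's proof: verify the hypotheses of Theorem~\ref{thm:spectreT} for the operator $T$, apply that theorem to obtain a positive simple eigenvalue $\rho(T)$ with a unique normalized positive eigenfunction, identify $\rho(T)=1$ via Remark~\ref{rk:spectreT}, and rescale to get the normalization \eqref{eq:normalization-eigenfunction}. The irreducibility check and the $\rho(T)=1$ identification are fine and agree with the paper.

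The one step that is not quite right is your justification that some power of $T$ is compact. You correctly observe that the dominating function $\theta\mapsto\sup_{\widetilde\theta}K(\widetilde\theta,\theta)\in L^1(\nu)$ makes $T$ weakly compact on $L^1(\nu)$ (by Dunford--Pettis). But the subsequent claim --- that ``since $K>0$ on $\Theta^2$ one checks that a suitable iterate $T^n$ has a kernel that is a genuine (bounded-times-integrable) product, hence $T^n$ is compact'' --- is a non sequitur: the positivity of $K$ plays no role here, and the iterated kernel $K_2(\widetilde\theta,\theta)=\int_\Theta K(\widetilde\theta,\eta)K(\eta,\theta)\nu(\rd\eta)$ does not obviously factor in the way you describe. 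The correct reason is that $L^1(\nu)$ has the Dunford--Pettis property, so the composition of two weakly compact operators on $L^1(\nu)$ is norm-compact; hence $T^2$ is compact. The paper packages the same fact differently, observing that $T$ is of Hille--Tamarkin type (precisely the condition $\sup_{\widetilde\theta}K(\widetilde\theta,\cdot)\in L^1(\nu)$) and invoking \cite[Theorem 11.9]{jorgens1982linear} to conclude $T^2$ is compact. So your ingredients are the right ones, but you should drop the ``product kernel'' remark and replace it by the Dunford--Pettis property argument (or cite the Hille--Tamarkin result directly, as the paper does). Everything else --- irreducibility, $\rho(T)=1$, and the rescaling --- is the same as the paper's proof.
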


\begin{proof}
From Assumption~\ref{Hyp:endemicity}-\eqref{Hyp:kernel in L^1} , we have 
\[
\int_{\Theta}\sup_{\widetilde\theta\in\Theta}K(\widetilde\theta,\theta)\nu(\rd\theta)<\infty ,
\]
consequently the operator $T$ is of Hille-Tamarkin type (see \cite[Section 11.3]{jorgens1982linear}) and it follows from \cite[Theorem~$11.9$]{jorgens1982linear} that the square of the operator $T^2$ from $L^1(\nu)$ to $L^1(\nu)$ is compact. 
Moreover, as the application $\theta\mapsto K(.,\theta)$ is a positive density on $\Theta$, Condition~(\ref{Ac-th-3})  of Theorem~\ref{thm:spectreT} is also satisfied.
We then deduce that the spectral radius $\rho(T)$ of the operator $T$ is a strictly positive, an isolated simple eigenvalue of $T$, and it is the only eigenvalue with a corresponding normalized positive $\nu$-a.e. eigenfunction.

Moreover, by Remark~\ref{rk:spectreT}, we note that $\rho(T)=1$. We then define $\fS _*$ as the unique positive eigenfunction  of $T$  in $L^1(\nu)$ associated with the eigenvalue $1$ such that Condition~\eqref{eq:normalization-eigenfunction} is satisfied. We note that $\fS _{*}$ satisfying \eqref{eq:normalization-eigenfunction} is well defined since $0\leq\lambda\leq\lambda_*$ by Assumption~\ref{Hyp:endemicity}-\eqref{Hyp:infectivity and susceptibility}. 
\end{proof}

Let $\fS _*$ be given by Proposition~\ref{VVM-th-F-propre}. We are now looking for $x>0$ such that \eqref{VVM-eq-st-1} holds. 
To this end, we introduce the function $H:\R_+\to\mathbb{R}_+$ defined by
\begin{equation}\label{VVM-eq-H}
H(x)=x\int_{\R_+\times\Theta}\exp\left(-x\int_0^a\gamma(s,\theta)\rd s\right)\fS _*(\theta)\rd a\nu(\rd\theta).
\end{equation}
Our goal is to find a solution to $H(x)=1$ on $(0,+\infty)$.
\begin{lemma}\label{VVM-lem-ex}
	Under Assumption~\ref{Hyp:endemicity}, the function $H:\R_+\to\R_+$ is well defined and continuous, and
	\[	H(0)=\int_{\Theta}\frac{1}{\gamma_*(\theta)}\fS _*(\theta)\nu(\rd \theta)\text{ and }\lim_{x \to +\infty}H(x)=+\infty.\]
     When Condition~\eqref{eq:condition-endemic} is satisfied, i.e. when 
    $\int_{\Theta}\frac{1}{\gamma_*(\theta)}\fS _*(\theta)\nu(\rd \theta)<1$,
    there exists $\fF_*>0$ such that $H(\fF_*)=1.$
\end{lemma}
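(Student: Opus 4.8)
The plan is to establish the three claimed properties of $H$ in turn: that $H$ is well-defined and continuous on $\R_+$, that $H(0)$ has the stated value, that $H(x)\to\infty$ as $x\to\infty$, and finally to conclude the existence of $\fF_*$ by the intermediate value theorem. First I would address well-definedness: for each fixed $x>0$, the inner integral $\int_{\R_+}\exp(-x\int_0^a\gamma(s,\theta)\,\rd s)\,\rd a$ is bounded $\nu$-a.e.\ by the essential supremum appearing in Assumption~\ref{Hyp:endemicity}-\eqref{Hyp:ess sup}, and since $\fS_*\in L^1(\nu)$, the double integral is finite; multiplying by $x$ keeps it finite. For $x=0$ the integrand is $\fS_*(\theta)$ and the $\rd a$-integral diverges unless compensated — but here we must be careful: $H(0)$ as written is $0\cdot(\text{something infinite})$, so the value $\int_\Theta \gamma_*(\theta)^{-1}\fS_*(\theta)\,\nu(\rd\theta)$ is to be read as the limit $\lim_{x\to 0^+}H(x)$, and that is what I would prove.

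For the limit as $x\to 0^+$: substitute $b = x a$ inside the $\rd a$-integral, so that $x\int_0^\infty \exp(-x\int_0^a\gamma(s,\theta)\,\rd s)\,\rd a = \int_0^\infty \exp\!\big(-\int_0^{b/x}\gamma(s,\theta)\,\rd s\big)\,\rd b$. By Assumption~\ref{Hyp:endemicity}-\eqref{Hyp:mean-susceptibility}, $\frac{1}{a}\int_0^a\gamma(s,\theta)\,\rd s\to\gamma_*(\theta)$, so $\int_0^{b/x}\gamma(s,\theta)\,\rd s = \frac{b}{x}\cdot\big(\tfrac{x}{b}\int_0^{b/x}\gamma\big)\to$ behaves like $\frac{b}{x}\gamma_*(\theta)\to\infty$ pointwise in $b>0$ as $x\to0^+$; hence the integrand tends to... wait — that would give $0$, not $\gamma_*^{-1}$. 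The correct substitution is $b = x\int_0^a\gamma(s,\theta)\,\rd s$; then $\rd b = x\gamma(a,\theta)\,\rd a$, which is awkward because of the $\gamma(a,\theta)$ factor. Instead I would argue directly: writing $G_\theta(a) = \int_0^a\gamma(s,\theta)\,\rd s$, one has $x\int_0^\infty e^{-xG_\theta(a)}\,\rd a$, and since $G_\theta(a)/a\to\gamma_*(\theta)>0$, for any $\eps>0$ there is $A$ with $(\gamma_*(\theta)-\eps)a \le G_\theta(a)\le (\gamma_*(\theta)+\eps)a$ for $a\ge A$; comparing with $x\int_0^\infty e^{-x(\gamma_*(\theta)\pm\eps)a}\,\rd a = (\gamma_*(\theta)\pm\eps)^{-1}$ and controlling the contribution of $[0,A]$ (which is $\le xA\to0$), one gets $x\int_0^\infty e^{-xG_\theta(a)}\,\rd a\to \gamma_*(\theta)^{-1}$ as $x\to0^+$, $\nu$-a.e. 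Then dominated convergence in $\theta$ — using that $x\int_0^\infty e^{-xG_\theta(a)}\,\rd a\le \mathrm{ess\,sup}_\Theta\int_0^\infty e^{-xG_\cdot(a)}\,\rd a$, uniformly bounded for $x$ in a neighborhood of any fixed $x_0$, and integrable against $\fS_*\,\rd\nu$ — yields $H(0^+)=\int_\Theta\gamma_*(\theta)^{-1}\fS_*(\theta)\,\nu(\rd\theta)$ (possibly $+\infty$, but the claim makes sense either way).

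For continuity on $(0,\infty)$ I would use dominated convergence: for $x$ in a compact subinterval $[c,C]\subset(0,\infty)$, the integrand $x\,e^{-xG_\theta(a)}\fS_*(\theta)$ is dominated by $C\,e^{-cG_\theta(a)}\fS_*(\theta)$, whose double integral is finite by Assumption~\ref{Hyp:endemicity}-\eqref{Hyp:ess sup} applied with $x=c$ and $\fS_*\in L^1(\nu)$. For $\lim_{x\to\infty}H(x)=+\infty$: restrict attention to a set $\Theta_0\subset\Theta$ of positive $\nu$-measure on which $\gamma_*(\theta)\le M$ for some finite $M$ (such a set exists since $\gamma_*$ is a.e.\ finite, being bounded by $1$). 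For $\theta\in\Theta_0$, by the lower bound $G_\theta(a)\le(\gamma_*(\theta)+1)a\le(M+1)a$ for $a$ large, hence $x\int_0^\infty e^{-xG_\theta(a)}\,\rd a \ge x\int_A^\infty e^{-x(M+1)a}\,\rd a - 0 = \frac{1}{M+1}e^{-x(M+1)A}$... this tends to $0$, not $\infty$. The right move: $x\int_0^\infty e^{-xG_\theta(a)}\,\rd a\ge x\int_0^\delta e^{-xG_\theta(a)}\,\rd a\ge x\int_0^\delta e^{-x\delta}\,\rd a$ is also wrong since $G_\theta(a)\le a\cdot 1$... actually $G_\theta(a)\le a$ since $\gamma\le1$, so $e^{-xG_\theta(a)}\ge e^{-xa}$, giving $x\int_0^\infty e^{-xG_\theta(a)}\,\rd a\ge x\int_0^\infty e^{-xa}\,\rd a=1$; that only gives $H(x)\ge\int_\Theta\fS_*\,\rd\nu=\kappa>0$, a lower bound but not divergence. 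To get divergence, note $\gamma(0,\cdot)\equiv0$ and $\lambda\gamma\equiv0$ with the support ordering (Assumption~\ref{Hyp:endemicity}-\eqref{Hyp:infectivity and susceptibility}), so there is an interval $[0,\tau(\theta)]$ on which $\gamma(\cdot,\theta)=0$; on a positive-measure set of $\theta$, $\tau(\theta)\ge\tau_0>0$, and there $G_\theta(a)=0$ for $a\le\tau_0$, so $x\int_0^\infty e^{-xG_\theta(a)}\,\rd a\ge x\int_0^{\tau_0}1\,\rd a=x\tau_0\to\infty$. Multiplying by $\fS_*$ and integrating over that set gives $H(x)\ge x\tau_0\int_{\{\tau\ge\tau_0\}}\fS_*\,\rd\nu\to\infty$. \textbf{The main obstacle} is precisely this last point — identifying the correct mechanism forcing $H(x)\to\infty$; the naive bounds via $\gamma\le1$ only give a positive lower bound, and one genuinely needs the full-immunity interval $\gamma(0,\cdot)\equiv0$ (together with a positive-measure set where this interval has length bounded below, which follows from Assumption~\ref{Hyp:endemicity}-\eqref{Hyp:infectivity and susceptibility}(c) since $R_0^*=1$ forces $\lambda$ to be nontrivially supported away from $0$ on a positive-measure set). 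Finally, granting these three facts, continuity of $H$ on $(0,\infty)$ together with $H(0^+)<1$ (Condition~\eqref{eq:condition-endemic}) and $H(x)\to\infty$ yields, by the intermediate value theorem, some $\fF_*>0$ with $H(\fF_*)=1$, completing the proof.
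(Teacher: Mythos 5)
Your proposal is correct and follows essentially the same route as the paper: well-definedness and continuity from Assumption~\ref{Hyp:endemicity}-\eqref{Hyp:ess sup} and $\fS_*\in L^1(\nu)$, identification of $H(0^+)$ via the Ces\`aro average $\frac{1}{a}\int_0^a\gamma\to\gamma_*$, divergence as $x\to\infty$, and the intermediate value theorem. The paper performs the substitutions $b=ax$ and $u=xs$ and then invokes Assumption~\ref{Hyp:endemicity}-\eqref{Hyp:mean-susceptibility} for $x\to 0^+$ and ``$\gamma\in[0,1]$ with $\gamma(0,\cdot)\equiv 0$ plus Fatou'' for $x\to+\infty$; your version replaces the first substitution with a direct sandwich against exponentials, which is equivalent.

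Where you genuinely add value is in the $x\to+\infty$ step. You correctly observe that the naive bound $\gamma\le 1$ only yields $H(x)\ge\int_\Theta\fS_*\,\nu(\rd\theta)$, and that the single-point condition $\gamma(0,\cdot)\equiv 0$ is not by itself sufficient to force divergence (a step function $\gamma$ jumping to $1$ immediately after $0$ would make $H$ bounded). The mechanism that actually drives $H(x)\to\infty$ is the full-immunity interval $[0,\tau(\theta))$ on which $\gamma(\cdot,\theta)\equiv 0$, with $\tau\ge\tau_0>0$ on a set of positive $\nu$-measure; and the existence of that positive-measure set is what Assumption~\ref{Hyp:endemicity}-\eqref{Hyp:disjoint-support} together with the normalization \eqref{eq:normalization-eigenfunction} of $\fS_*$ provides (since $\int_{\R_+\times\Theta}\lambda\fS_*\,\rd a\,\nu(\rd\theta)=1>0$ forces $\int_0^\infty\lambda(a,\theta)\,\rd a>0$ on a positive-measure set of $\theta$, and for such $\theta$ the support ordering pushes $\inf\{t:\gamma(t,\theta)>0\}$ strictly above $0$). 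The paper's Fatou argument, read closely, relies on exactly this fact but only cites $\gamma(0,\cdot)\equiv 0$, so your explicit unpacking is a clarification worth retaining. One small imprecision: the constraint you want is the normalization $\int\lambda\fS_*\,\rd a\,\nu(\rd\theta)=1$ from \eqref{eq:normalization-eigenfunction}, not ``$R_0^*=1$'' (by Remark~\ref{rk:endemic-conditon}, $R_0^*=1/\kappa$ need not equal $1$); the conclusion you draw from it is nevertheless correct.

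Both your write-up and the paper are terse on one further point: to conclude existence of $\fF_*$ by the intermediate value theorem one needs $\limsup_{x\to 0^+}H(x)\le\int_\Theta\gamma_*^{-1}\fS_*\,\nu(\rd\theta)$, i.e.\ a dominated-convergence bound as $x\downarrow 0$ (Fatou alone gives only the $\liminf$ inequality, which points the wrong way). You gesture at Assumption~\ref{Hyp:endemicity}-\eqref{Hyp:ess sup} as the domination; this matches the paper's level of detail and is fine, but be aware that this is where the essential-supremum assumption actually earns its keep in the $x\to 0^+$ regime, not just in establishing continuity for $x$ bounded away from $0$.
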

\begin{proof}
	Since $\fS _*\in L^1(\nu)$, from Assumption~\ref{Hyp:endemicity}-\eqref{Hyp:ess sup}, the function $H$ is well defined and continuous. 
	
	Using successive changes of variables $b=ax$ and $u=xs$, we have
	\begin{align}
	H(x)&=\int_{\R_+\times\Theta}\exp\left(-x\int_0^{b/x}\gamma(s,\theta)\rd s\right)\fS _*(\theta)\rd b\nu(\rd\theta)\label{eq:H1}\\
    &=\int_{\R_+\times\Theta}\exp\left(-\int_0^{b}\gamma\left(\frac{u}{x},\theta\right)\rd u\right)\fS _*(\theta)\rd b\nu(\rd\theta).\label{eq:H}
	\end{align}
	Hence, using Assumption~\ref{Hyp:endemicity}-\eqref{Hyp:mean-susceptibility} for the limit when $x$ goes to $0$, and using  $\gamma\in[0,1]$ with $\gamma(0,.)\equiv 0$ and Fatou's Lemma for the limit when $x$ goes to $+\infty$, it follows that,
	\begin{equation*}
	H(0)=\int_{\Theta}\frac{1}{\gamma_*(\theta)}\fS _*(\theta)\nu(\rd \theta),\text{ and }\lim_{x \to +\infty}H(x)=+\infty.
	\end{equation*} 
The conclusion follows.
\end{proof}

We can now give the proof of Theorem~\ref{thm:existence-endemicity}.
\begin{proof}[Proof of Theorem~\ref{thm:existence-endemicity}]
We first assume that $\displaystyle{\int_{\Theta}\frac{1}{\gamma_*(\theta)}\fS _*(\theta)\nu(\rd \theta)<1}$.
    Let  $\fF_*$ be given by Lemma~\ref{VVM-lem-ex},
we consider the function $u_*$ defined on $\dR_+\times\Theta$ by
\[
u_*(a,\theta)=\fF_*\fS _*(\theta)\exp\left(-\fF_*\int_{0}^{a}\gamma(s,\theta)\rd s\right).
\]
By Assumption~\ref{Hyp:endemicity}-\eqref{Hyp:disjoint-support},  $\mathrm{Supp}(\lambda)\cap \mathrm{Supp}(\gamma)= \emptyset$, we easily observe that $\forall x\geq0$,
  \begin{align}\label{eq:int-sigma*}
  \int_{\R_+\times\Theta}\lambda(a,\theta)\fS _*(\theta)\exp\left(-x\int_{0}^{a}\gamma(s,\theta)\rd s \right)\rd a\nu(\rd\theta)
  &=\int_{\R_+\times\Theta}\lambda(a,\theta)\fS _*(\theta)\rd a\nu(\rd\theta)=1.
  \end{align}
Since the couple $(\fF_*,\fS _*)$ satisfies the system of equation \eqref{VVM-eq-st-1}-\eqref{VVM-eq-st-2'}, and by \eqref{eq:int-sigma*},
we deduce that $u_*$ is a solution to the system \eqref{VVM-eq-st}.

Moreover, using Expression~\eqref{eq:H1} of the function $H$ to compute its first derivative, we have
\begin{align*}
	H'(x)&=\int_{\R_+\times\Theta}\PAR{-\int_0^{b/x}\gamma(s,\theta)\rd s+\frac{b}{x}\gamma\PAR{\frac{b}{x},\theta}}\exp\left(-x\int_0^{b/x}\gamma(s,\theta)\rd s\right)\fS _*(\theta)\rd b\nu(\rd\theta)\\
 &=x\int_{\R_+\times\Theta}\PAR{-\int_0^{a}\gamma(s,\theta)\rd s+a\gamma\PAR{a,\theta}}\exp\left(-x\int_0^{a}\gamma(s,\theta)\rd s\right)\fS _*(\theta)\rd a\nu\rd\theta).
 \end{align*}
 Consequently, when Condition~\eqref{eq:condi-uniq-endemic} is satisfied,  i.e.
 \[
  \forall a\geq 0 \quad \gamma(a,.)\geq \frac{1}{a}\int_0^a\gamma(s,.)\rd s,
 \]
 $H$ is a non-decreasing function and the conclusion follows.
\end{proof}


\begin{remark}
    When $a\mapsto\gamma(a,.)$ is $\nu$-a.e. non-decreasing, Condition~\eqref{eq:condi-uniq-endemic} is satisfied, and $H$ given by \eqref{eq:H} is obviously non-decreasing.
But as we will notice in Section~\ref{sec:vaccin-renewal} where we introduce vaccination policies, Condition~\eqref{eq:condi-uniq-endemic} is not a necessary condition for $H$ to be non-decreasing.
    
  Note that when $H$ is non-decreasing, then Condition \eqref{eq:condition-endemic} becomes a sufficient and necessary condition for the existence of an endemic equilibrium.
\end{remark}

Even if Condition~\eqref{eq:condi-uniq-endemic} is not a necessary condition for $H$ to be non-decreasing, we remark in the following example that it can be an optimal condition for the monotony of $H$.
\begin{example}\label{exple:monotony-H}
Fix $\alpha, \beta\in(0,1]$, with $\alpha\geq \beta$, and $T_V,T_R$ real values such that $0<T_R<T_V$. We consider  the non-monotone function $\gamma(a)=\alpha\ind_{T_R<a<T_V}+\beta\ind_{a\geq T_V}$,  independent of $\theta$. Assumption~\ref{Hyp:endemicity}-\eqref{Hyp:mean-susceptibility} is satisfied with $\gamma_*=\beta$.
We easily compute from \eqref{VVM-eq-H}  that
\[
H(x)=\kappa\PAR{xT_R+\frac{1}{\alpha}-\PAR{\frac{1}{\alpha}-\frac{1}{\beta}}\re^{-x(T_V-T_R)\alpha}},
\]
where $\kappa=\int_\Theta \fS _*(\theta)\nu(\rd\theta)$. We observe that $H$ is non-decreasing if and only if $\beta T_V\geq \alpha(T_V-T_R)$.

On the other hand, we have
\begin{align*}
\int_0^a\gamma(s)\rd s=\alpha\PAR{a-T_R}\ind_{T_R<a<T_V}+\PAR{\alpha(T_V-T_R)+\beta(a-T_V)}\ind_{a\geq T_V},
\end{align*}
which satisfies $\gamma(a)\geq \frac{1}{a}\int_0^a\gamma(s)\rd s$ for any $a\geq 0$ if and only if $\beta T_V\geq \alpha(T_V-T_R)$.
Note that the long time behaviour of a generalization of this model is detailed in Section~\ref{sec:vaccin-one-shot}. 
\end{example}

We now compute the value of $\fS _*$ for specific kernels $K(\cdot,\cdot)$.
\begin{example}\label{ex:values of S_*}
   \begin{enumerate}
   \item \label{ex:no memory} When there is no memory of the previous infections, we have $K(\widetilde\theta):=K(\theta,\widetilde\theta)$ independent of $\theta$ with $\int_{\Theta}K(\widetilde\theta)\nu(\rd\widetilde\theta)=1$. Up to a change of probability measure on $\PAR{\Theta, \cH, \nu}$, we can assume $K\equiv 1$. Then, we easily deduce that $\fS _*$ is a constant, and using Condition~\eqref{eq:normalization-eigenfunction}, we have
   \[
   \fS _*=\PAR{\int_{\dR_+\times\Theta}\lambda(a,\theta)\rd a\nu(\rd\theta)}^{-1}=\frac{1}{R_0},
   \]
   where $R_0$ is the basic reproduction number, i.e. the average number of infections produced by an infected individual in a population completely vulnerable to the disease.
   
Let us recall that under \cite[Assumption 4.1 and 4.2]{forien-Zotsa2022stochastic}, Assumption~\ref{Hyp:endemicity} is satisfied with $\gamma_*(\theta)=\lim_{a\to \infty}\gamma(a,\theta)$. 
Then, by Theorem~\ref{thm:existence-endemicity}, there is existence of an endemic equilibrium when
\[
R_0>\int_\Theta\frac{1}{\gamma_*(\theta)}\nu(\rd\theta)=\dE_\nu\SBRA{\frac{1}{\gamma_*}},
\] 
where $\dE_\nu$ is the expectation  on $\Theta$ with respect to $\nu$.
As noted in Remark~\ref{rk:endemic-conditon}, we  recover the threshold obtained in  \cite{forien-Zotsa2022stochastic}.

    \item When $K(\cdot,\cdot)$ is symmetric, i.e for each $\theta,\widetilde\theta\in \Theta,\,K(\widetilde{\theta},\theta)=K(\theta,\widetilde\theta)$, then from Assumption~\ref{Hyp:Kernel},
    \[\int_\Theta K(\widetilde{\theta},\theta)\nu(\rd \widetilde{\theta})=\int_\Theta K(\theta,\widetilde{\theta})\nu(\rd \widetilde{\theta})=1.\]
    Consequently, by Proposition~\ref{VVM-th-F-propre}, $\fS _*=\frac{1}{R_0}$ and the condition of existence of an endemic equilibrium is
    \[
    R_0>\int_\Theta\frac{1}{\gamma_*(\theta)}\nu(\rd\theta)=\dE_\nu\SBRA{\frac{1}{\gamma_*}}.
    \]
    \item We assume that there exists a density $\pi$ on $\Theta$ such that $\pi(\theta)K(\theta,\widetilde{\theta})=\pi(\widetilde{\theta})K(\widetilde{\theta},\theta).$ From Assumption~\ref{Hyp:Kernel}, 
\[\int_{\Theta}K(\widetilde{\theta},\theta)\pi(\widetilde{\theta})\nu(\rd\widetilde{\theta})=\pi(\theta)\int_{\Theta}K(\theta,\widetilde{\theta})\nu(\rd\widetilde{\theta})=\pi(\theta).\] 
Consequently, by Proposition~\ref{VVM-th-F-propre}, $\fS_*$ is equal to $\pi$ up to a constant. Since $\fS_*$ has been chosen such that
\[\int_{\R_+\times\Theta}\lambda(a,\theta)\fS _*(\theta)\rd a\nu(\rd\theta)=1,\]
we obtain
\[\fS _*(.)=\pi(.)\left(\int_{\R_+\times\Theta}\lambda(a,\theta)\pi(\theta)\rd a\nu(\rd\theta)\right)^{-1}.\]
Therefore, there is an endemic equilibrium if 
$
\dE_\nu^*\SBRA{\frac{1}{\gamma_*}}<R_0^*,
$
 with $R_0^*=\dE_\nu^*\SBRA{\int_0^\infty\lambda(a)\rd a}$ and $\dE_\nu^*$ the expectation  on $\Theta$ with respect to the measure $\pi(\theta)\nu(\rd\theta)$. 

\item We take $\Theta=\BRA{\theta_1,\theta_2}$, with
$\nu=p_1\delta_{\theta_1}+p_2\delta_{\theta_2}$, $p_2=1-p_1\in(0,1)$, and 
$K=\PAR{K_{ij}}_{1\leq i,j\leq 2}$, where $K_{ij}=K(\theta_i,\theta_j)$ with $K_{11}p_1+K_{12}p_2=K_{21}p_1+K_{22}p_2=1$ (Assumption~\ref{Hyp:Kernel} is thus satisfied). From \eqref{VV-op-T}, we consider the matrix
\[
T=\begin{pmatrix}
    K_{11}p_1&K_{21}p_2\\
    K_{12}p_1&K_{22}p_2
\end{pmatrix}.
\]
We notice that $\mathrm{Span}\BRA{\begin{pmatrix}
    K_{21}\\K_{12}
\end{pmatrix}}$ is the eigenspace associated to the eigenvalue $1$ and $\mathrm{Span}\BRA{\begin{pmatrix}
   -p_2\\p_1
\end{pmatrix}}$ is the eigenspace associated to the eigenvalue $p_2\PAR{K_{22}-K_{12}}$. By definition,
$\fS _*$ is the eigenfunction associated with the eigenvalue $1$ such that $\displaystyle{\int_{\dR_+\times \Theta} \lambda(a,\theta)\fS _*(\theta)\rd a\nu(\rd\theta)=1}$. We deduce
\[
\begin{pmatrix}
\fS _*(\theta_1)\\
\fS _*(\theta_2)
\end{pmatrix}
=\frac{1}{p_1K_{21}\int_0^\infty\lambda(a,\theta_1)\rd a+p_2K_{12}\int_0^\infty\lambda(a,\theta_2)\rd a}\, \begin{pmatrix}
    K_{21}\\K_{12}
\end{pmatrix}.
\]
The condition of existence of an endemic equilibrium is then
\[
\dE_\nu^*\SBRA{\frac{1}{\gamma_*}}<R_0^*\quad \text{with }R_0^*=\dE_\nu^*\SBRA{\int_0^\infty\lambda(a)\rd a}
\]
where $\dE_\nu^*$ is the expectation with respect to the measure $\PAR{\frac{p_1K_{21}}{p_1K_{21}+p_2K_{12}},\frac{p_2K_{12}}{p_1K_{21}+p_2K_{12}}}$ on $\Theta=\BRA{\theta_1,\theta_2}$.
\\
As noted in Remark~\ref{rk:spectreT}, we observe that all eigenvectors $B$ associated with the eigenvalue $p_2\PAR{K_{22}-K_{12}}$ satisfy $\displaystyle{\int_{\Theta} B(\theta)\nu(\rd \theta)=B_1p_1+B_2p_2=0}$.
\end{enumerate}
\end{example}

\subsection{About the local stability of endemic equilibria}\label{sec:local stability}

To deal with  the asymptotic stability of the equilibrium, we use the tools of abstract semi-linear Cauchy problems. We refer the reader to 
\cite{magal2018theory,thieme1990semiflows,webb1985theory} for more details.

We assume throughout this section that Assumptions~\ref{Hyp:Kernel}, \ref{Hyp:endemicity}, and \ref{Hyp:endemic-stability} are satisfied, as well as Condition~\eqref{eq:condition-endemic}.
 We also assume that $\Theta$ is an open subset of $\dR^d$ and $\nu$ is a probability measure absolutely continuous with respect to the Lebesgue measure with support on $\Theta$. 

In this section, we keep the memory of the last infection in the proof as far as possible, but will have to remove it in the last step to obtain semi-explicit conditions on the infectivity and the susceptibility curves ensuring local stability in a general setting. 

\medskip
	We set $\dX=L^1(\nu)\times L^1(\rd a\otimes\nu)$ and endow $\dX$ with the product norm \begin{equation}\label{eq:norm-X}
    \|(\psi,\phi)\|_{\dX}=\|\psi\|_{L^1(\nu)}+\|\phi\|_{L^1(\rd a\otimes\nu)}.
    \end{equation}

	We introduce $\dX_0=\{0\}\times L^1(\rd a\otimes\nu)$ and
    the Sobolev space $W^{1,1}(\R_+\times\dR^d):=\{\varphi\in L^{1}(\rd a\otimes\nu) \text{ such that }\partial_a\varphi\text{ exits and }\partial_a\varphi\in L^{1}(\rd a\otimes\nu)\}$. 
    We set $D(\mathcal{A})=\{0\}\times W^{1,1}(\R_+\times\dR^d)$. Note that $\overline{D(\mathcal{A})}=\dX_0$ (see \cite[Chapter 8, p. 354]{magal2018theory} for a similar construction). 
		 
	We define, for $\left(0,\phi\right)\in D(\mathcal{A})$, the operator
	\begin{equation}
		\mathcal{A}\left(\begin{aligned}&0\\&\phi\end{aligned}\right)(a,\theta)=\left(\begin{aligned}
		&-\phi(0,\theta)\\&-\partial_a \phi(a,\theta)\end{aligned}\right) ,
		\end{equation}
		and for $\phi\in L^1(\rd a\otimes\nu)$, the operator
		\begin{equation*}
		F(\phi)=F\left(\begin{aligned}0\\\phi\end{aligned}\right)=\left(\begin{aligned}
		F_0(\phi)\\F_1(\phi)\end{aligned}\right),
		\end{equation*}
with
		\begin{align*}
&F_0(\phi)(\theta)=\SCA{\lambda,\phi}\SCA{ \gamma K(\cdot,\theta),\phi},\\
&F_1(\phi)(a,\theta)=-\SCA{\lambda,\phi}\gamma(a,\theta)\phi(a,\theta),
\end{align*}
where $\SCA{\lambda,\phi}$ and  $\SCA{ \gamma K(\cdot,\theta),\phi}$ denote
\begin{align*}
\SCA{\lambda,\phi}&=\int_{\R_+\times\Theta}\lambda( a,\theta)\phi( a,\theta)\rd a\nu(\rd\theta)\\
\SCA{ \gamma K(\cdot,\theta),\phi}&=\int_{\R_+\times\Theta}\gamma( a,\widetilde\theta)K(\widetilde\theta,\theta)\phi(a,\widetilde\theta)\rd a\nu(\rd\widetilde{\theta}).
\end{align*}

By Assumption~\ref{Hyp:Kernel}, we remark that
\[
\int_\Theta\SCA{\gamma K(\cdot,\theta),\phi}\nu(\rd\theta)=\int_{\R_+\times\Theta}\gamma( a,\theta)\phi(a,\theta)\rd a\nu(\rd\theta).
\]

	We note that	 $\mathcal{A}$ is the infinitesimal generator of the following strongly continuous semigroup on $\dX_0$ (see, e.g. \cite[Theorem 1.3.1]{magal2018theory}): 
		 \begin{equation}
		 	T_\mathcal{A}(t)\left(\begin{aligned}
		 	&0\\&\phi
		 	\end{aligned}\right)(a,\theta)=\left(\begin{aligned}
		 	&\quad\quad0\\&\phi(a-t,\theta)\mathds{1}_{a\geq t}
		 	\end{aligned}\right).
		 	\end{equation}
			
	In the spirit of Thieme~\cite{thieme1990semiflows}, we thus can rewrite the PDE~\eqref{VVL-eq-1} as follows:
	\begin{equation} \label{VVM-eq-weak-r}
		\left\{
		\begin{aligned}
		&\partial_t v_t=\mathcal{A}(v_t)+F(v_t)\\
		&v(0,a,\theta)=v_0(a,\theta)
		\end{aligned}\right.
	\end{equation}
	where \[v_t(a,\theta)=\left(\begin{aligned}&\quad 0\\&u_t(a,\theta)\end{aligned}\right)\quad\text{ and } \quad v_0(a,\theta)=\left(\begin{aligned}&\quad 0\\&u_0(a,\theta)\end{aligned}\right),\]
  with for each $t\geq0$, $\int_{\R_+\times\Theta} u_t(a,\theta)\rd a \nu(\rd\theta)=1$.
    Hence the operator $\mathcal{A}$ is studied on the space 
    \begin{equation}
    \BRA{v=(0,u)\in\mathbb{X}_0:\int_{\R_+\times\Theta} u(a,\theta)\rd a\nu(\rd\theta)=1}.\label{eq-dom-A}\end{equation}
With this new formulation, the boundary condition of the  PDE~\eqref{VVL-eq-1} has been integrated in the perturbation $F$ of Equation~\eqref{VVM-eq-weak-r}.

Since the assumptions of Theorem~\ref{thm:existence-endemicity} and \eqref{eq:condition-endemic}  are satisfied, we introduce $u_*$ an endemic equilibrium, defined as a probability density on $\dR_+\times \Theta$ solution to the system \eqref{VVM-eq-st}.

We note that any equilibrium $u_*$ satisfies
\[
\cA(u_*)+F(u_*)=0.
\]

In what follows, the arguments are inspired by Thieme's~\cite{thieme1990semiflows} and Webb's \cite{webb1985theory}.
We note that Equation~\eqref{VVM-eq-weak-r} is non-linear, due to the non-linearity of $v\mapsto F(v)$. As $F$ is Frechet-differentiable, with derivative $F'(u_*)$, at the equilibrium point $(0,u_*)$, following   \cite{webb1985theory, magal2018theory}, to study the stability of the endemic equilibrium $u_*$, we first linearize Equation~\eqref{VVM-eq-weak-r}, replacing the non-linear part $F$ in Equation~\eqref{VVM-eq-weak-r} with its Frechet-derivative $F'(u_*)$ at equilibrium. Then we study the semigroup $\cA+F'(u_*)$, and we conclude with \cite[Theorem~$4.2$]{thieme1990semiflows}. Since Equation~\eqref{VVM-eq-weak-r}  has been linearized around the equilibrium $u_*$,  we note from \eqref{eq-dom-A} that the semigroup $\cA+F'(u_*)$ is studied on the space
    \begin{equation}\{v=(0,u)\in\mathbb{X}_0,\,\int_{\R_+\times\Theta} u(a,\theta)\rd a\nu(\rd\theta)=0\}.\label{eq-dom-A'}\end{equation}

In our case, the Frechet-derivatives of $F_0$ and $F_1$ are given by: 
	\begin{align*}
	&F_0'(h)(\phi)(\theta)=\SCA{\lambda,h}\SCA{ \gamma K(\cdot,\theta),\phi}+\SCA{\lambda,\phi}\SCA{ \gamma K(\cdot,\theta), h}
\\
	&F_1'(h)(\phi)(a,\theta)=-\SCA{\lambda,h}\gamma(a,\theta)\phi(a,\theta)-\SCA{\lambda,\phi}\gamma(a,\theta)h(a,\theta).
	\end{align*}

\medskip
We denote by $\mathcal{K}(\R_+\times\Theta)$ the set of compact operator on $\R_+\times\Theta$ and by $\mathcal{L}(\R_+\times\Theta)$ the set of bounded operators on $ L^1(\rd a\otimes\nu)$.
As in Thieme~\cite[Section 4.]{thieme1990semiflows} and  Webb \cite[Proposition~$4.12$, p.~$169$]{webb1985theory}, to study the long-term behaviour of the operator $\cA+F'(u_*)$, we introduce the growth bound $w_0(\cA+F'(u_*))$ and the essential growth bound (called $\alpha$-growth bound in Webb), respectively defined by
\begin{align}\label{eq:def-w0}
    w_0(\cA+F'(u_*))&:=
    \lim_{t\to \infty}\frac{1}{t}\log\NRM{T_{\cA+F'(u_*)}(t)}_{\rm op},\\
     \label{eq:def-wess}
     w_{\rm ess}(\cA+F'(u_*))&:=
     \lim_{t\to \infty}\frac{1}{t}\log\NRM{T_{\cA+F'(u_*)}(t)}_{\rm ess},
\end{align}
 where $T_{\cA+F'(u_*)}$ is the semi-group related to the operator $\cA+F'(u_*)$, $\NRM{.}_{\rm op}$ is the operator norm defined for a semi-group $T$ by 
 \[
 \NRM{T}_{\rm op}:=\sup_{\phi:\NRM{\phi}_{ L^1(\rd a\otimes\nu)}=1}\NRM{T(\phi)}_{ L^1(\rd a\otimes\nu)},
 \] and $\NRM{.}_{\rm ess}$ is the operator norm on the quotient space $\mathcal{L}(\R_+\times\Theta)\big/\mathcal{K}(\R_+\times\Theta)$. 
 
There is the following relation between $w_0$, $w_{\rm ess}$, and the spectrum of the operator $\cA+F'(u_*)$ (see Equation (4.57) in \cite[Proposition 4.13]{webb1985theory}):
    \begin{equation}\label{eq:relation w_0}
    w_0(\cA+F'(u_*))=\max\PAR{w_{\rm ess}(\cA+F'(u_*)),\sup_{\alpha\in{\rm sp}(\cA+F'(u_*))\setminus {\rm e_{sp}}(\cA+F'(u_*))}\cR e(\alpha)},
    \end{equation}
where ${\rm sp}(\cA+F'(u_*))$ is the spectrum of the operator $\cA+F'(u_*)$, ${\rm e_{sp}}(\cA+F'(u_*))$ is its essential spectral radius (see \cite[Definition 4.13 p.~165]{webb1985theory}), and $\cR e(\alpha)$ the real part of the complex $\alpha$. Our goal is to prove that $w_0(\cA+F'(u_*))$ is negative. We first study $w_{\rm ess}(\cA+F'(u_*))$, and in a second time we will study the spectrum of the operator.

We note that since $u_*$ is an endemic equilibrium of the PDE, we have $\SCA{\lambda,u_*}=\fF_*$ and $\SCA{ \gamma K(\cdot,\theta), u_*}=\fS _*(\theta)$ by \eqref{VVM-eq-st}.
To control $w_{\rm ess}(\cA+F'(u_*))$, we  decompose the operator as follows
\begin{align}
\left(\mathcal{A}+F'(u_*)\right)\begin{pmatrix}0\\\phi\end{pmatrix}(a,\theta)
&=\begin{pmatrix}
-\phi(0,\theta)+\fS_*(\theta)\langle\lambda,\phi\rangle+\SCA{\gamma K(\cdot,\theta), \phi}\fF_*\\-\partial_a\phi(a,\theta)-\gamma(a,\theta)\phi(a,\theta)\fF_*-\gamma(a,\theta)u_*(a,\theta)\langle\lambda,\phi\rangle\end{pmatrix}
\notag\\
&=\begin{pmatrix}-\phi(0,\theta)\\-\partial_a\phi(a,\theta)-\gamma(a,\theta)\phi(a,\theta)\fF_*\end{pmatrix}+
\begin{pmatrix}
\fS_*(\theta)\langle\lambda,\phi\rangle+\SCA{\gamma K(\cdot,\theta), \phi}\fF_*\\-\gamma(a,\theta)u_*(a,\theta)\langle\lambda,\phi\rangle
\end{pmatrix}
\notag\\	&=:\mathcal{A}_*\left(\begin{aligned}&0\\&\phi\end{aligned}\right)(a,\theta)+\mathcal{B}_*\left(\begin{aligned}&0\\&\phi\end{aligned}\right)(a,\theta).
\label{eq:def_A*-B*}
\end{align}
Lemma~\ref{lem:B*_compact} in Appendix~\ref{A:compact-operators} states that $\mathcal{B}_*$ 
is a compact operator for the norm $\NRM{.}_{\dX}$ (see \eqref{eq:norm-X}).
 Consequently, by \cite[Proposition~$4.14$, page~$179$]{webb1985theory}, 
\begin{equation}\label{eq:wess-A*}
    w_{\rm ess}(\cA+F'(u_*))=w_{\rm ess}(\cA_*).
\end{equation}

By definition $w_{\rm ess}(\cA_*)=
    \lim_{t\to \infty}\frac{1}{t}\log\NRM{T_*(t)}_{\rm ess}$, where the semi-group  $T_{\cA_*}$ generated by the operator $\cA_*$ is given thanks to the method of characteristics by
\[
T_{\cA_*}(t)\begin{pmatrix}
    0\\
    \phi
\end{pmatrix}=\begin{pmatrix}
    0\\
    T_*(t)(\phi)
\end{pmatrix}
\]
with
\begin{align}
T_*(t)\left(\phi\right)(a,\theta)&=\mathds{1}_{a\geq t}\left[\phi(a-t,\theta)\exp\left(-\fF_*\int_{0}^{t}\gamma(a-t+r,\theta)\rd r\right)\right.\nonumber\\&\quad \left.-\int_{0}^{t}\gamma(a-t+s ,\theta)u_*(a-t+s,\theta)\langle \lambda,T_*(s)(\phi)\rangle\exp\left(-\fF_*\int_{s}^{t}\gamma(a-t+r,\theta)dr\right)\rd s\right]\nonumber\\
&=:T_*^1(t)\left(\phi\right)(a,\theta)-T_*^2(t)\left(\phi\right)(a,\theta).\label{eq:T_* for a>t-ess}
\end{align}

 We remark that under Assumption~\ref{Hyp:endemic-stability}, the function $\gamma_*$ defined in Assumption~\ref{Hyp:endemicity}-\eqref{Hyp:mean-susceptibility} satisfies $\gamma_*(\cdot)\geq \sigma$.
Using the compactness of the operator $T_*^2$, stated in Lemma~\ref{eq-compact-T-2} in Appendix~\ref{A:compact-operators}, we obtain the following negative upper bound for $w_{\rm ess}$.
\begin{lemma}\label{lem:bound on w_ess}
    Under Assumptions~\ref{Hyp:Kernel}, \ref{Hyp:endemicity} and \ref{Hyp:endemic-stability}, we have
\[
w_{\rm ess}(\cA+F'(u_*))\leq -\fF_*\sigma.
\]
\end{lemma}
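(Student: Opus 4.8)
The plan is to estimate the essential growth bound of the semigroup $T_*$ directly from its explicit expression \eqref{eq:T_* for a>t-ess}, using the decomposition $T_*(t)=T_*^1(t)-T_*^2(t)$ together with the compactness of $T_*^2(t)$ recorded in Lemma~\ref{eq-compact-T-2}. Since $w_{\rm ess}$ is computed on the quotient space $\mathcal{L}(\R_+\times\Theta)/\mathcal{K}(\R_+\times\Theta)$, a compact perturbation leaves it unchanged; hence it suffices to bound $\limsup_{t\to\infty}\frac1t\log\NRM{T_*^1(t)}_{\rm op}$. This reduces the problem to a pure operator-norm estimate on the multiplication-and-shift operator $T_*^1(t)(\phi)(a,\theta)=\ind_{a\geq t}\phi(a-t,\theta)\exp(-\fF_*\int_0^t\gamma(a-t+r,\theta)\,\rd r)$, which is the key technical step.

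The central observation is that under Assumption~\ref{Hyp:endemic-stability} we have $\gamma(a,\theta)\geq\sigma\ind_{(a_*,\infty)}(a)$, so for $a\geq t$,
\[
\int_0^t\gamma(a-t+r,\theta)\,\rd r=\int_{a-t}^{a}\gamma(s,\theta)\,\rd s\geq \sigma\,\big|(a-t,a)\cap(a_*,\infty)\big|\geq \sigma(t-a_*)^+,
\]
where the last inequality holds because the interval $(a-t,a)$ has length $t$ and overlaps $(a_*,\infty)$ in a set of measure at least $t-a_*$ whenever $t>a_*$ (for $a\geq t\geq a_*$ one has $a>a_*$, so the overlap is exactly $\min(t,a-a_*)\geq t-a_*$). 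Consequently $\exp(-\fF_*\int_0^t\gamma(a-t+r,\theta)\,\rd r)\leq e^{-\fF_*\sigma(t-a_*)}$ uniformly in $a\geq t$ and $\theta$. Integrating, for any $\phi\in L^1(\rd a\otimes\nu)$,
\[
\NRM{T_*^1(t)(\phi)}_{L^1(\rd a\otimes\nu)}\leq e^{-\fF_*\sigma(t-a_*)}\int_\Theta\int_t^\infty |\phi(a-t,\theta)|\,\rd a\,\nu(\rd\theta)\leq e^{\fF_*\sigma a_*}\,e^{-\fF_*\sigma t}\NRM{\phi}_{L^1(\rd a\otimes\nu)},
\]
so $\NRM{T_*^1(t)}_{\rm op}\leq e^{\fF_*\sigma a_*}e^{-\fF_*\sigma t}$, whence $\limsup_{t\to\infty}\frac1t\log\NRM{T_*^1(t)}_{\rm op}\leq-\fF_*\sigma$.

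To conclude, I combine this with $w_{\rm ess}(\cA_*)=\limsup_{t\to\infty}\frac1t\log\NRM{T_*(t)}_{\rm ess}$ and the subadditivity of the essential norm: $\NRM{T_*(t)}_{\rm ess}\leq\NRM{T_*^1(t)}_{\rm ess}+\NRM{T_*^2(t)}_{\rm ess}=\NRM{T_*^1(t)}_{\rm ess}\leq\NRM{T_*^1(t)}_{\rm op}$, since $T_*^2(t)$ is compact by Lemma~\ref{eq-compact-T-2} and so has zero essential norm. This gives $w_{\rm ess}(\cA_*)\leq-\fF_*\sigma$, and then \eqref{eq:wess-A*}, namely $w_{\rm ess}(\cA+F'(u_*))=w_{\rm ess}(\cA_*)$ (a consequence of the compactness of $\mathcal{B}_*$ from Lemma~\ref{lem:B*_compact}), yields the claim. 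The main obstacle is the geometric measure estimate on $\int_{a-t}^a\gamma(s,\theta)\,\rd s$: one must be careful that the lower bound $\gamma\geq\sigma\ind_{(a_*,\infty)}$ only becomes effective once the shifted interval has moved past $a_*$, which is precisely why the deterministic bound $a_*$ on the immunity gap (and its independence of $\theta$) is needed, and why the bound degrades to $\sigma(t-a_*)^+$ rather than $\sigma t$ — a discrepancy that is harmless after dividing by $t$ and letting $t\to\infty$.
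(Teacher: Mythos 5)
Your proof is correct and follows essentially the same route as the paper: reduce to $w_{\rm ess}(\cA_*)$ via compactness of $\cB_*$, discard the compact piece $T_*^2(t)$, and bound $\NRM{T_*^1(t)}_{\rm op}$ by $\re^{\fF_*\sigma a_*}\re^{-\fF_*\sigma t}$ using Assumption~\ref{Hyp:endemic-stability}. The only (cosmetic) difference is that you derive the uniform pointwise bound $\int_{a-t}^{a}\gamma(s,\theta)\,\rd s\geq\sigma(t-a_*)^+$ and apply it under the integral sign, whereas the paper changes variables and splits the $a$-integral at $a_*$; both yield the identical operator-norm estimate.
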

\begin{proof}

We have already noticed in \eqref{eq:wess-A*} that
	\[w_{\rm ess}(\mathcal{A}+F'(u_*))= w_{\rm ess}(\mathcal{A}_*).\]

As $T_*^2$ is a compact operator by Lemma~\ref{eq-compact-T-2}, and using \cite[Proposition~$4.9$, page~$166$]{webb1985theory}, we have
\begin{equation}\label{eq-nT}
 \NRM{T_*(t)}_{\rm ess}= \NRM{T_*^1(t)-T_*^2(t)}_{\rm ess}= \NRM{T_*^1(t)}_{\rm ess}\leq\NRM{T_*^1(t)}_{op}.
\end{equation}

By a simple change of variables in the integrals, and using Assumption~\ref{Hyp:endemic-stability} in the inequalities, we observe that  for $\phi\in  L^1(\rd a\otimes\nu)$
    \begin{align}\label{eq:NormI2-1-ess}
    \nonumber
&\NRM{T_*^1(t)(\phi)}_{ L^1(\rd a\otimes\nu)}=\int_\Theta\int_t^\infty \ABS{\phi(a-t,\theta)}\re^{-\fF_*\int_0^t\gamma(a-t+r,\theta)\rd r}\rd a\nu(\rd \theta)\\
\nonumber
&=\int_\Theta\int_0^\infty \ABS{\phi(a,\theta)}\re^{-\fF_*\int_a^{a+t}\gamma(r,\theta)\rd r}\rd a\nu(\rd \theta)\\
\nonumber
&=\int_\Theta\int_0^{a_*} \ABS{\phi(a,\theta)}\re^{-\fF_*\int_a^{a+t}\gamma(r,\theta)\rd r}\rd a\nu(\rd \theta)+\int_\Theta\int_{a_*}^\infty \ABS{\phi(a,\theta)}\re^{-\fF_*\int_a^{a+t}\gamma(r,\theta)\rd r}\rd a\nu(\rd \theta)
\\
\nonumber
&\leq 
\int_\Theta\int_0^{a_*} \ABS{\phi(a,\theta)}\re^{-\fF_*\int_{a}^{a+t}\gamma(r,\theta)\rd r}\rd a\nu(\rd \theta)+\re^{-\fF_*\sigma t}\int_\Theta\int_{a_*}^\infty \ABS{\phi(a,\theta)}\rd a\nu(\rd \theta)
\\
\nonumber
&\leq\re^{-\fF_*\sigma t}\left(\ind_{t\geq a_*}\re^{\fF_*\sigma a_*}\int_\Theta\int_0^{a_*} \ABS{\phi(a,\theta)}\rd a\nu(\rd \theta)+\ind_{t\leq a_*}\re^{\fF_*\sigma t}\int_\Theta\int_0^{a_*} \ABS{\phi(a,\theta)}\rd a\nu(\rd \theta)\right.\\
&\hspace{6cm}\left.+\int_\Theta\int_{a_*}^\infty \ABS{\phi(a,\theta)}\rd a\nu(\rd \theta)\right)\nonumber
\\
&
\leq \re^{-\fF_*\sigma t}\re^{\fF_*\sigma a_*}\NRM{\phi}_{L^1\PAR{\rd a\otimes \rd\nu}}.
    \end{align}


We deduce
\begin{align*}
\frac{1}{t}\log\NRM{T_*^1(t)}_{\rm op}\leq -\sigma\fF_*+\frac{\fF_*\sigma a_*}{t}.
\end{align*}
By definition~\eqref{eq:def-wess} of $w_{\rm ess}(\cA_*)$ and equation~\eqref{eq-nT}, we finally have
$
w_{\rm ess}(\cA_*)\leq -\fF_*\sigma
$.
\end{proof}
By \eqref{eq:relation w_0}, we now need to control the real part of the eigenvalues of the operator $\mathcal{A}+F'(u_*)$. Let us take $\alpha\in\mathbb C$, and consider the following eigenvalue problem 
\begin{equation}\label{eq:eigenfunction of A+F'}
\left(\mathcal{A}+F'(u_*)\right)\left(\begin{aligned}&0\\&\phi\end{aligned}\right)(a,\theta)=\alpha\begin{pmatrix}0\\\phi(a,\theta)\end{pmatrix},
\end{equation}
where $\phi$ is an eigenfunction associated with the eigenvalue $\alpha$. We easily rewrite \eqref{eq:eigenfunction of A+F'} in the following way
\begin{numcases}{}
-\partial_a\phi(a,\theta)-\gamma(a,\theta)\phi(a,\theta)\fF_*-\gamma(a,\theta)u_*(a,\theta)\langle\lambda,\phi\rangle=\alpha\phi(a,\theta)\notag\\
\phi(0,\theta)=\fS_*(\theta)\langle\lambda,\phi\rangle+\langle \gamma K (\cdot,\theta),\phi\rangle\fF_*.\label{VVM-eq-c-1}
\end{numcases}

Consequently, an eigenfunction $\phi$ associated with the eigenvalue $\alpha$ satisfies
\begin{align}
\phi(a,\theta)&=\phi(0,\theta)\exp\left(-\int_0^a\left(\fF_*\gamma(s,\theta)+\alpha\right)\rd s\right)\nonumber\\&\hspace{3cm}-\langle\lambda,\phi\rangle\int_{0}^{a}\gamma(b,\theta)u_*(b,\theta)\exp\left(-\int_b^a\left(\fF_*\gamma(s,\theta)+\alpha\right)\rd s\right)\rd b\nonumber\\
&=\phi(0,\theta)e^{-\alpha a}\frac{u_*(a,\theta)}{\fF_* \fS_*(\theta)}-\langle\lambda,\phi\rangle u_*(a,\theta)\int_{0}^{a}\gamma(b,\theta)e^{-\alpha(a-b)}\rd b\nonumber\\
&=\left(\frac{\langle\lambda,\phi\rangle}{\fF_*}+\frac{\langle \gamma K (\cdot,\theta),\phi\rangle}{\fS_*(\theta)}\right)u_*(a,\theta)e^{-\alpha a}-\langle\lambda,\phi\rangle u_*(a,\theta)\int_{0}^{a}\gamma(b,\theta)e^{-\alpha(a-b)}\rd b,\label{vvm-vl}
\end{align}
where we used in the second line the fact that 
\[u_*(a,\theta)=\fF_* \fS_*(\theta)\exp\left(-\fF_*\int_0^a\gamma(s,\theta)\rd s \right),\] and \eqref{VVM-eq-c-1} in the last line.
We then deduce the following condition on the eigenvalues of the operator $\cA+F'(u_*)$ in memory-free framework.

\begin{lemma}\label{lem:Lbda-int-equiv}
Suppose that Assumptions~\ref{Hyp:endemicity} and \ref{Hyp:endemic-stability} are satisfied. Let  $\alpha\in\dC$ be an eigenvalue of the operator $\cA+F'(u_*)$.   When there is no memory of the last infection ($K\equiv 1$), $\alpha$ satisfies 
\begin{align}
\int_{\R_+\times\Theta}u_*(a,\theta)&e^{-\alpha a}\rd a\nu(\rd\theta)\notag
\\
&=\frac{\fF_*}{R_0}\int_{\R_+\times\Theta}\lambda(a,\theta)e^{-\alpha a}\rd a\nu(\rd\theta)\int_{\R_+\times\Theta}u_*(a,\theta)\int_{0}^{a}\gamma(b,\theta)e^{-\alpha(a-b)}\rd b\rd a\nu(\rd\theta).\label{eq:condition-sur-alpha}
\end{align}
\end{lemma}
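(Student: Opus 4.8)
The plan is to start from the explicit representation \eqref{vvm-vl} of the eigenfunction $\phi$ associated with $\alpha$, which was derived just above the statement, and to specialize it to the memory-free setting $K\equiv 1$. In that case the susceptibility coupling simplifies: $\SCA{\gamma K(\cdot,\theta),\phi}=\int_{\R_+\times\Theta}\gamma(a,\wt\theta)\phi(a,\wt\theta)\rd a\nu(\rd\wt\theta)=:\SCA{\gamma,\phi}$ no longer depends on $\theta$, and we recall from Example~\ref{ex:values of S_*}-\eqref{ex:no memory} that $\fS_*\equiv \tfrac{1}{R_0}$ and $\fF_*=\fF_*$ with the normalization $\SCA{\lambda,\fS_*}=1$, i.e. $R_0=\int_{\R_+\times\Theta}\lambda(a,\theta)\rd a\nu(\rd\theta)$. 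So \eqref{vvm-vl} becomes
\[
\phi(a,\theta)=\left(\frac{\SCA{\lambda,\phi}}{\fF_*}+R_0\,\SCA{\gamma,\phi}\right)u_*(a,\theta)e^{-\alpha a}-\SCA{\lambda,\phi}\,u_*(a,\theta)\int_0^a\gamma(b,\theta)e^{-\alpha(a-b)}\rd b.
\]

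Next I would close the system by testing this identity against $\lambda$ and against $\gamma$ (integrating $\rd a\nu(\rd\theta)$), to obtain two linear equations in the two scalar unknowns $\SCA{\lambda,\phi}$ and $\SCA{\gamma,\phi}$. Using Assumption~\ref{Hyp:endemicity}-\eqref{Hyp:disjoint-support} that $\lambda\gamma\equiv 0$ with $\mathrm{Supp}(\lambda)$ to the left of $\mathrm{Supp}(\gamma)$: pairing with $\lambda$, the term $\int_0^a\gamma(b,\theta)e^{-\alpha(a-b)}\rd b$ vanishes wherever $\lambda(a,\theta)>0$ (since then $\gamma(\cdot,\theta)$ is supported beyond $a$), so the last term drops out and one gets
\[
\SCA{\lambda,\phi}=\left(\frac{\SCA{\lambda,\phi}}{\fF_*}+R_0\SCA{\gamma,\phi}\right)\int_{\R_+\times\Theta}\lambda(a,\theta)u_*(a,\theta)e^{-\alpha a}\rd a\nu(\rd\theta).
\]
Pairing with $\gamma$ and using $\int_\Theta\SCA{\gamma K(\cdot,\theta),\phi}\nu(\rd\theta)=\SCA{\gamma,\phi}$ plus the normalization $\int_{\R_+\times\Theta}u_*\,\rd a\nu(\rd\theta)=1$ gives a second relation; combining the two (eliminating the common factor $\tfrac{\SCA{\lambda,\phi}}{\fF_*}+R_0\SCA{\gamma,\phi}$, which must be nonzero unless $\phi$ trivially vanishes, a case one checks separately) and simplifying with $u_*(a,\theta)=\fF_*\fS_*(\theta)e^{-\fF_*\int_0^a\gamma}$ yields exactly \eqref{eq:condition-sur-alpha}. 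I would also need to rule out the degenerate branch where $\SCA{\lambda,\phi}=\SCA{\gamma,\phi}=0$: then $\phi$ satisfies $-\partial_a\phi=(\fF_*\gamma+\alpha)\phi$ with $\phi(0,\cdot)=0$, forcing $\phi\equiv 0$, so it is not an eigenfunction.

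The main obstacle is the bookkeeping around the disjoint-support hypothesis and making the elimination step rigorous: one must verify that on $\mathrm{Supp}(\lambda)$ the convolution-type term $\int_0^a\gamma(b,\theta)e^{-\alpha(a-b)}\,\rd b$ really is zero $\nu$-a.e. (this uses the inequality $\sup\{t:\lambda(t,\theta)>0\}\le\inf\{t:\gamma(t,\theta)>0\}$ for each $\theta$), and that the scalar $\tfrac{\SCA{\lambda,\phi}}{\fF_*}+R_0\SCA{\gamma,\phi}$ can be cancelled — equivalently, that an eigenfunction cannot have this quantity vanish while being nonzero, which again follows from the boundary condition \eqref{VVM-eq-c-1} forcing $\phi(0,\cdot)=0$ and hence $\phi\equiv0$ via the first-order ODE. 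Everything else is a direct substitution; no compactness or spectral theory is needed for this particular lemma, since it is purely the characteristic equation obtained by integrating the eigenvalue ODE.
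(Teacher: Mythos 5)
Your proposal and the paper share the same first step: specialize \eqref{vvm-vl} to $K\equiv 1$ (so $\fS_*\equiv 1/R_0$), pair against $\lambda$, and use the disjoint-support hypothesis to drop the convolution term on $\mathrm{Supp}(\lambda)$. The degeneracy argument ($\SCA{\lambda,\phi}=0\Rightarrow\phi\equiv 0$) is also the same in substance.

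The divergence is in the second equation used to close the system. You pair \eqref{eq:phi-no-memory} against $\gamma$, while the paper instead integrates \eqref{eq:phi-no-memory} over $(a,\theta)$ and invokes the constraint $\int_{\R_+\times\Theta}\phi\,\rd a\,\nu(\rd\theta)=0$. This constraint is not the normalization of $u_*$ (as your write-up suggests) but comes from the domain \eqref{eq-dom-A'} on which the linearized generator is studied: perturbations of the density live in the hyperplane of zero total mass. Integrating $\phi$ over the whole space is thus "pairing with the constant function $1$", which directly produces the two quantities $\int u_*e^{-\alpha a}$ and $\int u_*\int_0^a\gamma(b,\theta)e^{-\alpha(a-b)}\rd b$ appearing in \eqref{eq:condition-sur-alpha}. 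Your route, pairing with $\gamma$, produces instead the quantities $L_\gamma:=\int\gamma u_*e^{-\alpha a}$ and $M_\gamma:=\int\gamma u_*\int_0^a\gamma(b,\theta)e^{-\alpha(a-b)}\rd b$. Eliminating the scalar $c=\tfrac{\SCA{\lambda,\phi}}{\fF_*}+R_0\SCA{\gamma,\phi}$ as you indicate yields the characteristic equation $1=\tfrac{L_\lambda}{\fF_*}+R_0L_\gamma-R_0L_\lambda M_\gamma$. This is indeed equivalent to \eqref{eq:condition-sur-alpha}, but the equivalence is not a matter of "simply substituting $u_*$": it requires two integrations by parts exploiting $\partial_a u_*=-\fF_*\gamma u_*$ to derive the identities $\alpha I=\fF_*(\tfrac{1}{R_0}-L_\gamma)$ and $\fF_*M_\gamma=\tfrac{1}{R_0}-\alpha J$, where $I$ and $J$ denote the two integrals in \eqref{eq:condition-sur-alpha}. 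These steps are not in your plan, and without them your route does not land on the stated form of the lemma. So the plan is viable but incomplete: you should either (a) add the two integration-by-parts identities that convert $(L_\gamma,M_\gamma)$ into $(I,J)$, or (b) replace "pairing with $\gamma$" by the paper's cleaner use of $\int\phi=0$, after noting where that constraint comes from.
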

\begin{proof} Recall that the operator $\cA+F'(u_*)$ is studied on the space defined in~\eqref{eq-dom-A'}. Let $\phi$ be an  associated  eigenfunction to $\alpha$ with $\int_{\R_+\times\Theta}\phi(a,\theta)\rd a\nu(\rd\theta)=0$. 

Since $K\equiv 1$, we have $\fS_*\equiv\frac{1}{R_0}$ by Example~\ref{ex:values of S_*}-(\ref{ex:no memory}) with $R_0=\dE_\nu\SBRA{\int_0^\infty\lambda(a)\rd a}$, and Equation~\eqref{vvm-vl} satisfied by the eigenfunction $\phi$ becomes
\begin{align}\label{eq:phi-no-memory}
\phi(a,\theta)
&=\PAR{\frac{\langle\lambda,\phi\rangle}{\fF_*}+R_0\langle \gamma,\phi\rangle}u_*(a,\theta)e^{-\alpha a}-\langle\lambda,\phi\rangle u_*(a,\theta)\int_{0}^{a}\gamma(b,\theta)e^{-\alpha(a-b)}\rd b.
\end{align}
We deduce that
\begin{align*}
\SCA{\lambda,\phi}
&=\PAR{\frac{\langle\lambda,\phi\rangle}{\fF_*}+R_0\langle \gamma,\phi\rangle}\int_{\R_+\times\Theta}\lambda(a,\theta)u_*(a,\theta)e^{-\alpha a}\rd a\nu(\rd\theta),
\end{align*}
using the fact that $\lambda$ and $\gamma$ have disjoint supports by Assumption~\ref{Hyp:endemicity}-\eqref{Hyp:disjoint-support}.

We deduce that if $\SCA{\lambda,\phi}=0$, then $\SCA{\gamma,\phi}=0$ and $\phi\equiv 0$ (by \eqref{eq:phi-no-memory}), which contradicts the fact that $\phi$ is an eigenfunction. 
Consequently, $\langle\lambda,\phi\rangle\neq0$ and we have
\begin{equation}
1-\frac{1}{\fF_*}\int_{\R_+\times\Theta}\lambda(a,\theta)u_*(a,\theta)e^{-\alpha a}\rd a\nu(\rd\theta)=R_0\frac{\SCA{\gamma,\phi}}{\langle\lambda,\phi\rangle}\int_{\R_+\times\Theta}\lambda(a,\theta)u_*(a,\theta)e^{-\alpha a}\rd a\nu(\rd\theta).\label{Lbda-int}
\end{equation}
On the other hand, we have from \eqref{eq:phi-no-memory}
\begin{multline*}
\int_{\R_+\times\Theta}\phi(a,\theta)\rd a\nu(\rd\theta)=\PAR{\frac{\langle\lambda,\phi\rangle}{\fF_*}+R_0\langle \gamma,\phi\rangle}\int_{\R_+\times\Theta}u_*(a,\theta)e^{-\alpha a}\rd a\nu(\rd\theta)\\
-\langle\lambda,\phi\rangle\int_{\R_+\times\Theta}u_*(a,\theta)\int_{0}^{a}\gamma(b,\theta)e^{-\alpha(a-b)}\rd b\rd a\nu(\rd\theta).
\end{multline*}
As $\phi$ is such that  $\displaystyle{\int_{\R_+\times\Theta}\phi(a,\theta)\rd a\nu(\rd \theta)=0}$,
it follows that
\begin{multline}
\langle\lambda,\phi\rangle\left(\int_{\R_+\times\Theta}u_*(a,\theta)\int_{0}^{a}\gamma(b,\theta)e^{-\alpha(a-b)}\rd b\rd a-\frac{1}{\fF_*}\int_{\R_+\times\Theta}u_*(a,\theta)e^{-\alpha a}\rd a\nu(\rd\theta)\right)
\\
=R_0\SCA{\gamma,\phi}\int_{\R_+\times\Theta}u_*(a,\theta)e^{-\alpha a}\rd a\nu(\rd\theta).\label{Int-vvm}
\end{multline}
Combining \eqref{Lbda-int} and \eqref{Int-vvm}, we deduce
\begin{multline*}
\left(1-\frac{1}{\fF_*}\int_{\R_+\times\Theta}\lambda(a,\theta)u_*(a,\theta)e^{-\alpha a}\rd a\nu(\rd\theta)\right)\int_{\R_+\times\Theta}u_*(a,\theta)e^{-\alpha a}\rd a\nu(\rd\theta)\\
=\left(\int_{\R_+\times\Theta}u_*(a,\theta)\int_{0}^{a}\gamma(b,\theta)e^{-\alpha(a-b)}\rd b\rd a-\frac{1}{\fF_*}\int_{\R_+\times\Theta}u_*(a,\theta)e^{-\alpha a}\rd a\nu(\rd\theta)\right)\\\times\int_{\R_+\times\Theta}\lambda(a,\theta)u_*(a,\theta)e^{-\alpha a}\rd a\nu(\rd\theta),
\end{multline*}

which can be simplified in the following way
  \begin{align*}
\int_{\R_+\times\Theta}u_*(a,\theta)&e^{-\alpha a}\rd a\nu(\rd\theta)\notag
\\
&=\int_{\R_+\times\Theta}\lambda(a,\theta)u_*(a,\theta)e^{-\alpha a}\rd a\nu(\rd\theta)\int_{\R_+\times\Theta}u_*(a,\theta)\int_{0}^{a}\gamma(b,\theta)e^{-\alpha(a-b)}\rd b\rd a\nu(\rd\theta)\notag
\\
&=\frac{\fF_*}{R_0}\int_{\R_+\times\Theta}\lambda(a,\theta)e^{-\alpha a}\rd a\nu(\rd\theta)\int_{\R_+\times\Theta}u_*(a,\theta)\int_{0}^{a}\gamma(b,\theta)e^{-\alpha(a-b)}\rd b\rd a\nu(\rd\theta),
\end{align*}
where we have used Expression~\eqref{VVM-eq-st} of $u_*$ and the fact that $\lambda$ and $\gamma$ have disjoint supports by Assumption~\ref{Hyp:endemicity} in the last equality.
\end{proof}

We now deduce the local stability of the equilibrium. 

\begin{proof}[Proof of Theorem~\ref{VVM-conj}]

By Lemma~\ref{lem:bound on w_ess}, we know that $w_{\rm ess}$ is smaller than the negative value $-\fF_*\sigma$ and by assumption there is no eigenvalue $\alpha$ of the operator $\cA+F'(u_*)$ with $\cR e(\alpha)\geq 0$. Consequently, by the relation \eqref{eq:relation w_0}, we deduce that
\[
w_0(\cA+F'(u_*))<0.
\]

The conclusion of Theorem~\ref{VVM-conj} follows by  taking $w_0:=w_0(\cA+F'(u_*))$, and applying Thieme~\cite[Theorem 4.2]{thieme1990semiflows}  to Equation~\eqref{VVM-eq-weak-r}.
\end{proof}

\begin{remark} 
We assume that assumptions of Theorem~\ref{VVM-conj} are satisfied.
\begin{enumerate}
\item
 If the operator $\cA_*+F'(u_*)$ has no eigenvalue $\alpha$ with real part $\cR e(\alpha)>-\sigma\fF_*$, then
    the result of Theorem~\ref{VVM-conj} holds for any $w\in\PAR{-\sigma\cF_*,0}$.
    \item When $a\mapsto\gamma(a,.)$ are non decreasing functions, using Equation~\eqref{VVM-eq-st},
    we have
    \begin{align*}
\fF_*\int_{\dR_+\times \Theta} u_*(a,\theta)\PAR{\int_0^a\gamma(b,\theta)\rd b}\rd a\nu(\rd \theta)
\leq 1-\frac{\fF_*}{R_0}\dE_\nu[T]<1,
\end{align*}
where $T$ is a nonnegative variable on $\PAR{\Theta, \cH,\nu}$ such that $\mathrm{Supp}(\gamma(.,\theta))\subset[T(\theta), +\infty)$. Consequently,
    Condition~\eqref{eq:condition-sur-alpha} is not satisfied when $\alpha\to 0$. This means that there is no eigenvalue of the operator $\cA+F'(u_*)$ close to $0$.
\end{enumerate}
\end{remark}

In the next section, we study the local stability for a specific model.

\subsection{Local stability of endemicity for  a SIS-type model}\label{end-spec-L}
Throughout this section, we assume that $K\equiv 1$. 
We consider bounded infectivity curves $a\mapsto \lambda(a,.)$ with support in $[0,T(.)]$, and 
step susceptibility curves $\gamma$ of the form
 \[
    \gamma(a,.)=\ind_{[T(.),+\infty)}(a),\quad \text{for $a\geq 0$,}
    \]
    where $T$ is a positive integrable variable defined on the probability space $\PAR{\Theta,\cH, \nu}$.

We can compute each quantity explicitly:  $\gamma_*\equiv 1$, $\fS_*\equiv \frac{1}{R_0}$, $\fF_*=\frac{R_0-1}{\dE_\nu[T]}$ from~\eqref{VVM-eq-H}, and for $(a,\theta)\in\dR_+\times \Theta$, we have from Equation~\eqref{VVM-eq-st}
    \[
    u_*(a,\theta)=\frac{\fF_*}{R_0}\ind_{a< T(\theta)}+\frac{\fF_*}{R_0}\re^{-\fF_*(a-T(\theta))}\ind_{a\geq T(\theta)}.
    \]
    We assume $R_0>1$ to ensure the existence of the endemic equilibrium.
    We easily compute that for $\alpha\neq 0$
    \[
    \int_{\dR_+\times \Theta} u_*(a,\theta)\re^{-\alpha a}\rd a\nu(\rd \theta)=\frac{\fF_*}{\alpha R_0}-\frac{\fF_*^2}{\alpha R_0(\fF_*+\alpha)}\dE_\nu\SBRA{\re^{-\alpha T}}.
    \]
    On the other hand, for $\alpha\neq 0$, we have
    \begin{align*}
    &\int_{\dR_+\times \Theta}\lambda(a,\theta)u_*(a,\theta)\re^{-\alpha a}\rd a\nu(\rd \theta)=\fF_*\frac{\int_0^\infty\dE_\nu[\lambda(a)]\re^{-\alpha a}\rd a}{R_0}\\
    &\int_{\dR_+\times \Theta} u_*(a,\theta)\PAR{\int_0^a\gamma(b,\theta)\re^{-\alpha(a-b)}\rd b}\rd a\nu(\rd \theta)
    =\frac{1}{R_0(\fF_*+\alpha)}.
    \end{align*}
    
    Then Condition~\eqref{eq:condition-sur-alpha} is equivalent to
    \begin{align*}
   1+ \frac{\fF_*}{\alpha}\PAR{1-\dE_\nu\SBRA{\re^{-\alpha T}}}&= \frac{\int_0^\infty\dE_\nu[\lambda(a)]\re^{-\alpha a}\rd a}{R_0},
    \end{align*}
    which can be rewritten, using $\int_0^T\re^{-\alpha a}\rd a=\frac{1}{\alpha}\PAR{1-\re^{-\alpha T}}$,
    \begin{equation}\label{eq:condi-alpha-exple}
    1+\fF_*\int_0^\infty\re^{-\alpha a}\nu(T>a)\rd a= \frac{\int_0^\infty\dE_\nu[\lambda(a)]\re^{-\alpha a}\rd a}{R_0}.
    \end{equation}

    As $R_0>1$, we observe that \eqref{eq:condi-alpha-exple} cannot be satisfied when $\alpha\to 0$, and then the operator $\cA+F'(u_*)$ cannot have eigenvalues close to $0$.
We now focus on a SIS-type model, for which we can conclude. Since the model must satisfy Assumption~\ref{Hyp:endemic-stability}  for the local stability stated in Theorem~\ref{VVM-conj}, the classical SIS model cannot be included in our study.
However, to our knowledge, this is the first result of endemic equilibrium stability for this type of model.
\begin{prop}\label{prop:exple-stability}
    We consider a model without memory of previous infections ($K\equiv 1)$. Let $a_*>0$ and $T=E\wedge a_*$ with $E$ an exponential variable, defined on $\PAR{\Theta,\cH,\nu}$, of parameter $\rho$: $\nu(E>a)=\re^{-\rho a}$. We consider step infectivity  and susceptibility  curves:
 \[
   \lambda(a,.)=\lambda_*\ind_{[0,T(.))}(a), \quad  \gamma(a,.)=\ind_{[T(.),+\infty)}(a),\quad \text{for $a\geq 0$.}
    \]
We assume that $\lambda_*\leq 2\rho\re^{\rho a_*}$. Then, there is a unique endemic equilibrium when 
$ \lambda_*\dE_\nu[T]>1$,
   which is locally stable. 
\end{prop}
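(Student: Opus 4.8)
The plan is to verify the hypotheses of Theorem~\ref{VVM-conj} for the given SIS-type model and then to rule out eigenvalues $\alpha$ of $\cA+F'(u_*)$ with $\cR e(\alpha)\geq 0$ by analyzing the characteristic equation~\eqref{eq:condi-alpha-exple} specialized to $T=E\wedge a_*$. First I would record that $K\equiv 1$, that Assumption~\ref{Hyp:endemicity} holds here (the supports of $\lambda(\cdot,\theta)=\lambda_*\ind_{[0,T(\theta))}$ and $\gamma(\cdot,\theta)=\ind_{[T(\theta),+\infty)}$ are disjoint with $\gamma(0,\cdot)=0$, and $\gamma_*\equiv 1$ so \ref{Hyp:endemicity}-\eqref{Hyp:ess sup} holds trivially), and that Assumption~\ref{Hyp:endemic-stability} holds with $\sigma=1$ and $a_*$ since $\gamma(a,\cdot)\geq\ind_{(a_*,\infty)}(a)$ because $T\leq a_*$ almost surely. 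Since $\gamma_*\equiv 1$ and $\fS_*\equiv 1/R_0$ with $R_0=\dE_\nu[\int_0^\infty\lambda(a)\,\rd a]=\lambda_*\dE_\nu[T]$, the condition $R_0>\int_\Theta \gamma_*(\theta)^{-1}\nu(\rd\theta)=1$ becomes exactly $\lambda_*\dE_\nu[T]>1$, which is the hypothesis; this also gives existence and (using the step form of $\gamma$, which is non-decreasing in $a$, hence~\eqref{eq:condi-uniq-endemic}) uniqueness of the endemic equilibrium via Theorem~\ref{thm:existence-endemicity}. So the only thing left is the spectral condition.

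Next I would compute $\dE_\nu[e^{-\alpha T}]$ and $\int_0^\infty e^{-\alpha a}\nu(T>a)\,\rd a$ and $\int_0^\infty\dE_\nu[\lambda(a)]e^{-\alpha a}\,\rd a$ explicitly for $T=E\wedge a_*$ with $\nu(E>a)=e^{-\rho a}$. We have $\nu(T>a)=e^{-\rho a}\ind_{a<a_*}$, so $\int_0^\infty e^{-\alpha a}\nu(T>a)\,\rd a=\int_0^{a_*}e^{-(\alpha+\rho)a}\,\rd a=\frac{1-e^{-(\alpha+\rho)a_*}}{\alpha+\rho}$ for $\alpha\neq-\rho$, and similarly $\dE_\nu[\lambda(a)]=\lambda_*\nu(T>a)=\lambda_*e^{-\rho a}\ind_{a<a_*}$, so $\int_0^\infty\dE_\nu[\lambda(a)]e^{-\alpha a}\,\rd a=\lambda_*\frac{1-e^{-(\alpha+\rho)a_*}}{\alpha+\rho}$. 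Plugging into~\eqref{eq:condi-alpha-exple} and writing $R_0=\lambda_*\dE_\nu[T]=\lambda_*\frac{1-e^{-\rho a_*}}{\rho}$, the characteristic equation becomes
\[
1+\fF_*\,\frac{1-e^{-(\alpha+\rho)a_*}}{\alpha+\rho}=\frac{\rho}{1-e^{-\rho a_*}}\cdot\frac{1-e^{-(\alpha+\rho)a_*}}{\alpha+\rho},
\]
with $\fF_*=\frac{R_0-1}{\dE_\nu[T]}=\frac{\lambda_*(1-e^{-\rho a_*})-\rho}{1-e^{-\rho a_*}}$ (equivalently $\fF_*=\lambda_*-\rho/(1-e^{-\rho a_*})$). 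I would rearrange this into the form $g(\alpha):=(\alpha+\rho)+(\fF_*-\lambda_*)\big(1-e^{-(\alpha+\rho)a_*}\big)=0$ after clearing denominators, i.e. $(\alpha+\rho)=\dfrac{\rho}{1-e^{-\rho a_*}}\big(1-e^{-(\alpha+\rho)a_*}\big)$ since $\lambda_*-\fF_*=\rho/(1-e^{-\rho a_*})$. Writing $\beta:=\alpha+\rho$ and $c:=\rho/(1-e^{-\rho a_*})>0$, the equation is $\beta=c\,(1-e^{-\beta a_*})$; note $\cR e(\alpha)\geq0$ corresponds to $\cR e(\beta)\geq\rho>0$.

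The heart of the argument is therefore: \emph{the equation $\beta=c(1-e^{-\beta a_*})$ has no root with $\cR e(\beta)\geq\rho$, provided $\lambda_*\leq 2\rho e^{\rho a_*}$.} I expect this to be the main obstacle. The idea is a standard argument for such transcendental equations: for $\cR e(\beta)\geq 0$ one has $|1-e^{-\beta a_*}|\leq 1+|e^{-\beta a_*}|\leq 2$ when $\cR e(\beta)\geq0$ (since $|e^{-\beta a_*}|\leq1$), hence any root with $\cR e(\beta)\geq 0$ satisfies $|\beta|\leq 2c$; combined with $\cR e(\beta)\geq\rho$ this confines roots to a bounded region, and one then shows $\cR e$ of the right-hand side is strictly less than $\cR e(\beta)$ there. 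More precisely, I would write $\cR e(\beta)=c\big(1-e^{-a_*\cR e(\beta)}\cos(a_*\Im(\beta))\big)\leq c\big(1-e^{-a_*\cR e(\beta)}\big)\cdot$(handle the cosine sign) and use convexity/monotonicity: the real equation $x=c(1-e^{-a_*x})$ has $x=0$ as a root and, since $\frac{\rd}{\rd x}c(1-e^{-a_*x})\big|_{x=0}=ca_*$, a positive root exists iff $ca_*>1$; one checks $ca_*=\frac{\rho a_*}{1-e^{-\rho a_*}}>1$ always, so there is a positive real root $x_0$, and the condition $\lambda_*\leq 2\rho e^{\rho a_*}$ should translate (via $\fF_*=\lambda_*-c$ and $R_0>1\Leftrightarrow c<\lambda_*$, i.e. $0<\fF_*$) into $x_0<\rho$, equivalently $\fF_* = c(1-e^{-a_* x_0})/\cdots$ — here I would solve $x_0$ from $\fF_*$ and check $x_0<\rho \iff \fF_*<\rho = $ something, using $\fF_*=\lambda_*-c$ and bounding. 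For complex roots with $\cR e(\beta)=:x\geq\rho$, separating real parts gives $x=c-ce^{-a_*x}\cos(a_*y)\leq c+ce^{-a_*x}\leq c(1-e^{-a_*x})+2ce^{-a_*x}$; I would instead argue directly that $x\leq c(1-e^{-a_*x}\cos(a_*y))$ forces, if $x\geq x_0$, that $\cos(a_*y)\leq e^{a_*x}(1-x/c)\leq e^{a_*x_0}(1-x_0/c)$, and push this to a contradiction using $\lambda_*\leq 2\rho e^{\rho a_*}$; the factor $2$ in the hypothesis is exactly what makes this sign estimate close. Once no eigenvalue has $\cR e(\alpha)\geq0$, Theorem~\ref{VVM-conj} applies verbatim and yields local exponential stability, completing the proof.
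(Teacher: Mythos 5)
Your verification of the hypotheses and the identification of the unique endemic equilibrium when $\lambda_*\dE_\nu[T]>1$ are correct, and the overall plan (plug the explicit $T=E\wedge a_*$ into~\eqref{eq:condi-alpha-exple}) is the right one. However, the reduction of the characteristic equation contains an algebraic error that is fatal to the subsequent analysis. Starting from
\[
1+\fF_*\,L(\alpha)=\frac{\lambda_*}{R_0}\,L(\alpha),\qquad L(\alpha)=\int_0^\infty e^{-\alpha a}\nu(T>a)\,\rd a=\frac{1-e^{-\beta a_*}}{\beta},\ \ \beta:=\alpha+\rho,
\]
clearing the denominator gives $\beta+\bigl(\fF_*-\tfrac{\lambda_*}{R_0}\bigr)(1-e^{-\beta a_*})=0$, with the coefficient $\fF_*-\lambda_*/R_0$, \emph{not} $\fF_*-\lambda_*$ as you wrote. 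Since $\fF_*=\lambda_*(1-1/R_0)$ this coefficient equals $\lambda_*(R_0-2)/R_0$, which \emph{changes sign at $R_0=2$}; your version replaces it by the fixed value $-c$ with $c=\rho/(1-e^{-\rho a_*})$, which only matches when $R_0=1$ (excluded by the endemicity assumption). The paper's case split $1<R_0<3$ vs.\ $R_0\geq 3$ is precisely tracking this sign, and your reduction erases it.

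The error is not cosmetic: the equation $\beta=c(1-e^{-\beta a_*})$ that you obtain has $\beta=\rho$ as an exact real root (indeed $c(1-e^{-\rho a_*})=\rho$ by the very definition of $c$), i.e.\ $\alpha=0$, so your version of the characteristic equation \emph{would exhibit an eigenvalue on the imaginary axis and contradict the stability you are trying to prove}. This is a strong signal that the algebra went wrong. Beyond this, the final stretch of the argument ("push this to a contradiction using $\lambda_*\leq 2\rho e^{\rho a_*}$; the factor $2$ is exactly what makes this sign estimate close") is left as an assertion rather than an argument. The paper instead works with the real part of~\eqref{eq:condi-alpha-exple}, which after the same Laplace-transform computation becomes
\[
1+(R_0-2)\frac{\lambda_*}{R_0}\int_0^{\infty}e^{-x a}\cos(ya)\,\nu(T>a)\,\rd a=0,
\]
and then splits by the sign and size of $R_0-2$: for $1<R_0<3$ one bounds the integral by $\dE_\nu[T]=R_0/\lambda_*$ to see the correction term has modulus $|R_0-2|<1$, while for $R_0\geq 3$ one computes the cosine integral explicitly for $\nu(T>a)=e^{-\rho a}\ind_{a<a_*}$ and shows positivity using $\lambda_*\,e^{-\rho a_*}\leq 2\rho$. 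You should re-derive the coefficient and follow the sign of $R_0-2$ explicitly; the rest of your plan (Laplace transform of $\nu(T>a)$, reduction to a transcendental equation in $\beta$, invocation of Theorem~\ref{VVM-conj}) is sound.
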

We observe that when $a_*=+\infty$, the model presented in Proposition~\ref{prop:exple-stability} coincides with the classical SIS model. The condition on the parameters is not too restrictive, because for fixed values of $\lambda_*$ and $\rho$, it is sufficient to choose $a_*$ large enough to satisfy it.

\begin{proof} The assumptions of Theorem~\ref{thm:existence-endemicity} are satisfied, and since $\gamma$ is non decreasing, there is a unique endemic equilibrium when $ R_0=\dE_\nu\SBRA{\int_0^\infty\lambda(a)\rd a}=\lambda_*\dE_\nu[T]=\lambda_* \frac{1-\re^{-\rho a_*}}{\rho}>1$.  To obtain local stability by Theorem~\ref{VVM-conj}, we prove by contradiction that Condition~\eqref{eq:condi-alpha-exple} cannot be satisfied for $\alpha\in\dC$ with $\cR e(\alpha)\geq 0$.

Let $\alpha=x+iy$ with $x\geq 0$. Computing the real 
    part of \eqref{eq:condi-alpha-exple}, we obtain
    \begin{equation*} 
1+\fF_*\int_0^{\infty}\re^{-x a}\cos(ya)\nu(T>a)\rd a= \frac{\int_0^\infty\dE_\nu[\lambda(a)]\re^{-x a}\cos(ay)\rd a}{R_0}.
    \end{equation*}
    We note that $\ABS{\frac{\int_0^\infty\dE_\nu[\lambda(a)]\re^{-x a}\cos(ay)\rd a}{R_0}}\leq 1$. 
As $\lambda(a,.)=\lambda_*\ind_{T(.)> a}$, we have $\fF_*=\lambda_*\left(1-\frac{1}{R_0}\right).$ Thus the real part of Equation~\eqref{eq:condi-alpha-exple} satisfies
\begin{equation}\label{sis-V}
1+(R_0-2)\frac{\lambda_*}{R_0}\int_0^{\infty}\re^{-x a}\cos(ya)\nu(T>a)\rd a=0.
\end{equation}
We first study the case $1<R_0<3$. Then, for $x\geq0$, since $R_0=\lambda_*\dE_\nu[T]$,
\[\left|(R_0-2)\frac{\lambda_*}{R_0}\int_0^{\infty}\re^{-x a}\cos(ya)\nu(T>a)\rd a\right|\leq \ABS{2-R_0}<1.\] Consequently,
\begin{equation*}
1+(R_0-2)\frac{\lambda_*}{R_0}\int_0^{\infty}\re^{-x a}\cos(ya)\nu(T>a)\rd a>0.
\end{equation*}
On the other hand, when $R_0\geq 3$, 
    we  compute
\begin{align*}
\int_0^{\infty}\re^{-x a}\cos(ya)\nu(T>a)\rd a&=
 \int_0^{a_*}\re^{-x a}\cos(ya)\re^{-\rho a}\rd a\\
&=\frac{x+\rho}{(x+\rho)^2+y^2}+\frac{\re^{-(x+\rho)a_*}}{(x+\rho)^2+y^2}\PAR{y\sin(a_*y)-(x+\rho)\cos(a_*y)}.
 \end{align*}
We have
\begin{multline*}
1+(R_0-2)\frac{\lambda_*}{R_0}\int_0^{\infty}\re^{-x a}\cos(ya)\nu(T>a)\rd a\\
\begin{aligned}
&=1+(R_0-2)\frac{\lambda_*}{R_0}\frac{x+\rho}{(x+\rho)^2+y^2}\PAR{1-\cos(a_*y)\re^{-(x+\rho)a_*}+\frac{y\sin(a_*y)}{x+\rho}\re^{-(x+\rho)a_*}}\\
&=1+\frac{R_0-2}{R_0}\frac{y\sin(a_*y)\re^{-(x+\rho)a_*}}{(x+\rho)^2+y^2}+(R_0-2)\frac{\lambda_*}{R_0}\frac{x+\rho}{(x+\rho)^2+y^2}\PAR{1-\cos(a_*y)\re^{-(x+\rho)a_*}}.
\end{aligned}		
\end{multline*}
However, for $R_0\geq 3$, $x\geq 0$ and $y\in\dR$,
\begin{align*}
&(R_0-2)\frac{\lambda_*}{R_0}\frac{x+\rho}{(x+\rho)^2+y^2}\PAR{1-\cos(a_*y)\re^{-(x+\rho)a_*}}\geq \frac{\lambda_*\rho}{R_0\PAR{(x+\rho)^2+y^2}}\PAR{1-\re^{-\rho a_*}}>0,\\
&1+\lambda_*\frac{R_0-2}{R_0}\frac{y\sin(a_*y)\re^{-(x+\rho)a_*}}{(x+\rho)^2+y^2}\geq1-\lambda_*\frac{\ABS{y}\re^{-\rho a_*}}{\rho^2+y^2}\geq 1-\frac{\lambda_*\re^{-\rho a_*}}{2\rho}.
\end{align*}
Consequently, as soon as $\frac{\lambda_*\re^{-\rho a_*}}{2\rho}\leq 1$, Equation~\eqref{sis-V} has no solution $\alpha=x+iy$ with $x\geq 0$.
In conclusion, by Theorem~\ref{VVM-conj}, there is local stability of the equilibrium for this specific model.
\end{proof}

\section{Application to models incorporating a vaccination policy}\label{sec:vaccin}

In this section, we apply our results on the existence of endemic equilibria to specific cases of susceptibility curves taking into account a vaccine policy into the model. We will consider two types of vaccine policy. The first one is a toy model with memory of the last infection where the results are explicit, and the second one is a more complex model, but without memory of the previous infections, similar to the one studied in \cite{foutel2023optimal}.

\subsection{One shot of vaccination after an infection}\label{sec:vaccin-one-shot}

We consider the case where access to the vaccine is very restricted, and only people vulnerable to the disease (those who have been infected) have the opportunity to receive a single dose of vaccine to prevent re-infection. It is a generalization of the model introduced in Example~\ref{exple:monotony-H}. 

\bigskip
Let $T_I$ be the infection duration after a contamination defined on $(\Theta,\cH,\nu)$.  We introduce two integrable positive random times $T_R$ and $T_V$ on $(\Theta,\cH,\nu)$, with $T_R\geq T_I$ and $T_V\geq T_I$ $\nu$-a.e., where $T_R-T_I$ is the immunity period after an infection, and $T_V$ is the vaccination time after an infection. Let $\alpha,\beta\in(0,1]$ be random variables on $(\Theta,\cH,\nu)$ independent of $T_R$ and $T_V$.
We consider susceptibility curves given by, for $(a, \theta)\in\dR_+\times \Theta$,
\begin{align*}
\gamma(a,\theta)=
\begin{cases}
\alpha(\theta)\ind_{T_R(\theta)\leq a<T_V(\theta)}+\beta(\theta)\ind_{a\geq T_V(\theta)}&\text{ if $T_V(\theta)>T_R(\theta)$,}\\[0.2cm]
\beta(\theta)\ind_{a\geq T_V(\theta)}&\text{ otherwise.}
\end{cases}
\end{align*}

We assume that Assumptions~\ref{Hyp:Kernel} and \ref{Hyp:endemicity}-\eqref{Hyp:infectivity and susceptibility} are satisfied. We also assume that $\beta$ is bounded from below by a positive constant $\nu$-a.e.,
which implies that Assumptions~\ref{Hyp:endemicity}-\eqref{Hyp:mean-susceptibility}-\eqref{Hyp:ess sup}-\eqref{Hyp:kernel in L^1} are also satisfied with $\gamma_*\equiv \beta$ for a good positive memory kernel $K$. We introduce the expectation $\dE^*_\nu$ defined in Remark~\ref{rk:endemic-conditon} and $\kappa=\dE_\nu\SBRA{\fS_*}$.

When $T_V(\theta)\leq T_R(\theta)$, we have
\[
x\int_0^\infty  \exp\PAR{-x\int_0^a\gamma(s,\theta)\rd s}\rd a=xT_V(\theta)+\frac{1}{\beta(\theta)}.
\]
When $T_V(\theta)>T_R(\theta)$, we have
\begin{align*}
x\int_0^\infty  \exp\PAR{-x\int_0^a\gamma(s,\theta)\rd s}\rd a=xT_R(\theta)+\frac{1}{\alpha(\theta)}-\re^{-x\alpha(\theta)(T_V(\theta)-T_R(\theta))}\PAR{\frac{1}{\alpha(\theta)}-\frac{1}{\beta(\theta)}}.
\end{align*}
We note that the function $H$, defined by \eqref{VVM-eq-H}, is increasing when $\alpha\leq \beta$ $\nu$-a.e., which is indeed obvious since $\gamma$ is non-decreasing.

\medskip
Let us consider the case $\alpha\geq \beta$ $\nu$-a.e., i.e. $\gamma$ non-monotone, which is realistic when  vaccination improves immunity.

We have
\[
H(x)=\kappa\PAR{\dE_\nu^*\SBRA{\frac{1}{\alpha}}+x\dE_\nu^*[T_R\wedge T_V]-\dE_\nu^*\SBRA{\PAR{\frac{1}{\alpha}-\frac{1}{\beta}}\re^{-x\alpha(T_V-T_R)_+}}},
\]
with $a\wedge b=\min(a,b)$ and $a_+=\max(a,0)$.
We easily compute
\[
H'(x)=\kappa\PAR{\dE_\nu^*[T_R\wedge T_V]+\dE_\nu^*\SBRA{\PAR{1-\frac{\alpha}{\beta}}(T_V-T_R)_+\re^{-x\alpha(T_V-T_R)_+}}}.
\]
If $\alpha\geq \beta$ $\nu$-a.e., $H'$ is increasing, with $H'(0)=\frac{1}{R_0}\PAR{\dE_\nu^*[T_R\wedge T_V]+\dE_\nu^*\SBRA{1-\frac{\alpha}{\beta}}\dE_\nu^*\SBRA{(T_V-T_R)_+}}$.

Consequently, 
\begin{itemize}
\item
if $\alpha\geq \beta$ $\nu$-a.e. and $\dE_\nu^*[T_R\wedge T_V]\geq\dE_\nu^* \SBRA{\frac{\alpha}{\beta}-1}\dE_\nu^*\SBRA{(T_V-T_R)_+}$, then $H$ is increasing and by Theorem~\ref{thm:existence-endemicity}, there is a (unique) endemic equilibrium if and only if $\dE_\nu^*\SBRA{\frac{1}{\beta}}<\frac{1}{\kappa}=R_0^*$, where $R_0^*$ is defined by \eqref{eq:R_0^*}.

\item if  $\alpha\geq \beta$ $\nu$-a.e. and $\dE_\nu^*[T_R\wedge T_V]< \dE_\nu^*\SBRA{\frac{\alpha}{\beta}-1}\dE_\nu^*\SBRA{(T_V-T_R)_+}$, then $H$ is decreasing and then increasing, with $H(0)=\kappa\dE_\nu^*\SBRA{\frac{1}{\beta}}$.

Let $x_{\min}$ such that $H'(x_{\min})=0$. Then if  $\dE_\nu^*\SBRA{\frac{1}{\beta}}<\frac{1}{\kappa}$, there is a (unique) endemic equilibrium. When $\dE_\nu^*\SBRA{\frac{1}{\beta}}>\frac{1}{\kappa}$ and $H(x_{\min})>1$, there is extinction of the disease. But when $\dE_\nu^*\SBRA{\frac{1}{\beta}}>\frac{1}{\kappa}$ and $H(x_{\min})<1$, there exist two solutions to $H(x)=1$, and then two possible endemic equilibria.
\end{itemize}

This is an interesting toy model because we can exhibit the existence of more than endemic equilibrium under good conditions.

\subsection{A renewal process of vaccination}\label{sec:vaccin-renewal}

In this section we assume that  there is no memory of the previous infections (i.e., the memory kernel is $K\equiv 1$). We also assume that the vaccine policy is independent of the evolution of the disease for each individuals.
To model the independence between the vaccine policy and the disease, we assume that the probability space  $(\Theta,\cH,\nu)$ is a product space $\Theta=\Theta_1\times \Theta_2$ with $\nu=\nu_1\otimes \nu_2$.

As in \cite{foutel2023optimal}, we consider the case where between infections the individuals are regularly vaccinated. More precisely, after an infection, successive times of vaccination are sampled for the infected individual according to a renewal process. At each new infection of the individual, new times of vaccination are sampled independently of the previous ones. We assume that the first vaccination occurs after the end of the individual's infectious period.

\medskip
We introduce $\sigma$ a function defined on $\dR_+\times\Theta_1$ with values in $[0,1]$, non-decreasing with respect to its first variable. This function models the susceptibility between two vaccine doses. We also consider $T_I$ and $T_V$, two non-negative integrable functions defined respectively on $\Theta_1$ and $\Theta_2$,  modeling the duration of infection after contamination for the former, and the duration between two vaccine injections for the latter.

\medskip
At each new infection, we sample $\theta=(\theta_1,\theta_2)$ where $\theta_1=(\theta_{1,n})_{n\geq 0}$ and $\theta_2=(\theta_{2,n})_{n\geq 1}$ are two sequences of i.i.d. random variables with respective distribution $\nu_1$ and $\nu_2$.
We then consider susceptibility curves of the following form, for $a\geq 0$,
\[
\gamma(a,\theta)=\sum_{n=0}^\infty\sigma_n(a-\tau_n(\theta))\ind_{\tau_n(\theta)\leq a<\tau_{n+1}(\theta)}=\sum_{n=0}^{N(a,\theta)}\sigma_n(a-\tau_n(\theta))\ind_{\tau_n(\theta)\leq a<\tau_{n+1}(\theta)},
\]
where, for $n\geq 0$,
\begin{itemize}
\item $\sigma_n(a)=\sigma(a,\theta_{1,n})$
\item the times $\tau_n(\theta)$ are defined by induction by $\tau_0(\theta)=T_I(\theta_{1,0})$, and for $n\geq 0$
\[
\tau_{n+1}(\theta)=\tau_{n}(\theta)+T_{V,n+1}(\theta),
\]
with $T_{V,n}(\theta)=T_{V}(\theta_{2,n})$.
\item $N$ is the counting process associated with the times $(\tau_n)_{n\geq 0}$: $N(a,\theta)=\sum_{n\geq 1}\ind_{\tau_n(\theta)\leq a}$.
\end{itemize}
We note that the process $(N(a,\cdot))_{a\geq0}$ is a renewal process (see, e.g. \cite{asmussen2003applied}). 

This model with a renewal vaccination policy is the same as that studied in \cite{foutel2023optimal}, except that the duration of infection after contamination $T_I(\theta_{1,0})$ and the first susceptibility curve $\sigma_0(\cdot)=\sigma(\cdot,\theta_{1,0})$ are not necessarily independent.
In this section, we show that Theorem~\ref{thm:existence-endemicity} allows us to recover the threshold for the existence of an endemic equilibrium obtained in \cite{foutel2023optimal}.

To this purpose, we first compute the function $\gamma_*$ from Assumption~\ref{Hyp:mean-susceptibility} and thus study the quantity, for $b>0$,
\begin{align*}
\frac{1}{b}\int_0^b\gamma(a,\theta)\rd a&=\frac{N(b,\theta)}{b}\frac{1}{N(b,\theta)}\sum_{n=1}^{N(b,\theta)}\int_{0}^{T_V(\theta_{2,n})}\sigma(a,\theta_{1,n-1})\rd a+\frac{1}{ b}\int_{0}^{b-\tau_{N(b,\theta)}(\theta)}\sigma(a,\theta_{1,N(b,\theta)})\rd a,
\end{align*}
with $\displaystyle{\frac{1}{ b}\int_{0}^{b-\tau_{N(b,\theta)}(\theta)}\sigma_{N(b,\theta)}(a)\rd a\leq \frac{b-\tau_{N(b,\theta)}(\theta)}{b}}$. 
We also note that $$\displaystyle{\frac{b-\tau_{N(b,\theta)}(\theta)}{b}=1-\frac{N(b,\theta)}{b}\frac{1}{N(b,\theta)}\sum_{k=1}^{N(b,\theta)}T_V(\theta_{2,k})}.$$ 

A classical result on renewal processes gives   (see e.g. \cite[Proposition 1.4, page~$140$]{asmussen2003applied})
\[
\frac{N(b,\theta)}{b}\underset{b\to +\infty}{\overset{a.s.}{\longrightarrow}}\PAR{\int_{\Theta_2}T_V(\theta_2)\nu_2(\rd\theta_2)}^{-1}.
\]
By assumptions $\PAR{\displaystyle{\int_0^{T_V(\theta_{2,n})}\sigma(a,\theta_{1,n-1})\rd a}}_{n\geq 1}$ are integrable i.i.d. random variables, consequently the limit $\displaystyle{\gamma_*=\lim_{b\to +\infty}\frac{1}{b}\int_0^b\gamma(a,\theta)\rd a}$ exists a.s. and
\[
\gamma_*=\frac{\iint_{\Theta_1\times\Theta_2}\left(\int_0^{T_V(\theta_{2})}\sigma(a,\theta_{1})\rd a\right)\nu_1(d\theta_1)\nu_2(d\theta_2)}{\int_{\Theta_2}T_V(\theta_2)\nu_2(\rd\theta_2)}\quad \text{a.s.}
\]
We notice that  $\gamma_*$ is a constant. Since there is no memory of the previous infection, we know by Example~\ref{ex:values of S_*}-\eqref{ex:no memory} that $\fS_*=1/R_0$.
By Theorem~\ref{thm:existence-endemicity}, there is existence of an endemic equilibrium if
\[
R_0>\frac{1}{\gamma_*}=\frac{\int_{\Theta}T_V(\theta_2)\nu_2(\rd\theta_2)}{\int_{\Theta}\int_{\Theta}\left(\int_0^{T_V(\theta_{2})}\sigma(a,\theta_{1}))\rd a\right)\nu_1(d\theta_1)\nu_2(d\theta_2)}.
\] 
We easily remark that we can rewrite $\frac{1}{\gamma_*}=\dE_\nu\SBRA{T_V}/\dE_\nu\SBRA{\int_0^{T_V}\sigma(a)\rd a}$. We then recover the threshold obtained by \cite{foutel2023optimal} for a similar model with the same vaccine policy.

\subsection*{Acknowledgements}
This project has benefited from discussions with Bertrand Cloez, Raphaël Forien, and Étienne Pardoux, who we would like to thank for their enthusiasm and their generosity.

\appendix
\section{Absolute continuity of the solution}\label{A:abs-conti}
\renewcommand{\theequation}{\Alph{section}.\arabic{equation}}
\let \sappend=\section
\renewcommand{\section}{\setcounter{equation}{0}\sappend}

\begin{prop}\label{AP-density}
	Let $\mu$ be the solution to Equation~\eqref{VVM-eq1}. If $\mu_0(\rd a,\rd\theta)$ is absolutely continuous with respect to the measure on $\R_+\times\Theta,$ then, for every $t\geq0$, $\mu_t(\rd a,\rd\theta)$ is also absolutely continuous with respect to $\rd a\nu(\rd \theta)$. 
\end{prop}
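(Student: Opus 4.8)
The plan is to work directly from the weak formulation~\eqref{VVM-eq1} and show that $\mu_t$ charges no $(\rd a\otimes\nu)$-negligible set, without needing any a priori regularity of $\mu_s$ for $s<t$. Write $u_0$ for the density of $\mu_0$. Fix $t>0$ and, for a bounded Borel function $\varphi$ on $\R_+\times\Theta$, apply~\eqref{VVM-eq1} with the test function $f_s(a,\theta)=\varphi(a+t-s,\theta)$, which solves $\partial_s f_s+\partial_a f_s=0$ with terminal value $\varphi$; when $\varphi$ is $\cC^1$ in its first variable this $f$ is admissible, and since $f_s(0,\tilde\theta)=\varphi(t-s,\tilde\theta)$ and $\int_\Theta K(\theta,\tilde\theta)\nu(\rd\tilde\theta)=1$, a direct substitution gives
\[
\SCA{\mu_t,\varphi}=\int_{\R_+\times\Theta}\varphi(a+t,\theta)u_0(a,\theta)\,\rd a\,\nu(\rd\theta)+\mathrm{(II)}-\mathrm{(III)},
\]
where
\[
\mathrm{(II)}=\int_0^t\SCA{\mu_s,\lambda}\int_{\R_+\times\Theta}\gamma(a,\theta)\Big(\int_\Theta\varphi(t-s,\tilde\theta)K(\theta,\tilde\theta)\nu(\rd\tilde\theta)\Big)\mu_s(\rd a,\rd\theta)\,\rd s,
\]
\[
\mathrm{(III)}=\int_0^t\SCA{\mu_s,\lambda}\int_{\R_+\times\Theta}\gamma(a,\theta)\varphi(a+t-s,\theta)\,\mu_s(\rd a,\rd\theta)\,\rd s.
\]
Each of the four functionals of $\varphi$ above is continuous under bounded pointwise convergence (dominated convergence, using $\gamma\leq 1$, $\lambda\leq\lambda_*$, $\int_\Theta K(\theta,\cdot)\nu=1$, that the $\mu_s$ are probability measures, and $u_0\in L^1$), so a routine monotone-class argument extends the identity to all bounded Borel $\varphi$.

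Next I would specialise to $\varphi=\ind_D$ for an arbitrary Borel set $D\subset\R_+\times\Theta$ with $(\rd a\otimes\nu)(D)=0$. The first term vanishes because $\{(a,\theta):(a+t,\theta)\in D\}$ is again $(\rd a\otimes\nu)$-negligible and $u_0\,\rd a\,\nu\ll\rd a\otimes\nu$. For $\mathrm{(II)}$, Fubini gives $\nu(\{\tilde\theta:(r,\tilde\theta)\in D\})=0$ for Lebesgue-a.e. $r$, so the inner $\nu$-integral $\int_\Theta\ind_D(t-s,\tilde\theta)K(\theta,\tilde\theta)\nu(\rd\tilde\theta)$ is $0$ for a.e. $s\in[0,t]$, whence $\mathrm{(II)}=0$. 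Finally $\mathrm{(III)}\geq 0$ since its integrand is a product of nonnegative terms. Therefore $\mu_t(D)=\SCA{\mu_t,\ind_D}=-\mathrm{(III)}\leq 0$, and since $\mu_t(D)\geq 0$ we get $\mu_t(D)=0$. As $D$ ranges over all $(\rd a\otimes\nu)$-null sets this proves $\mu_t\ll\rd a\otimes\nu$ (in particular $\mu_t(\{t\}\times\Theta)=0$, so nothing special needs to be checked on the characteristic line $a=t$).

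I expect the computation to be short; the only genuinely delicate point is the passage from the $\cC^1$ test functions allowed in~\eqref{VVM-eq1} to the indicator $\ind_D$, which is handled by the density/monotone-class argument indicated above. The structural reason the proof goes through is that the only place where $\mu_s$ with $s<t$ is integrated against a translated copy of $D$ is term $\mathrm{(III)}$, and it enters with a sign that makes it innocuous once combined with the positivity of $\mu_t$ — one never has to evaluate it, which is what lets the argument avoid circularity. (Equivalently, one could give a probabilistic proof via the nonlinear representation of $\mu$ from the remark after Proposition~\ref{Th-equiv-FPPZ}, decomposing according to whether a reinfection has occurred by time $t$: the no-jump part is a forward transport of $u_0$, and the jump part is absolutely continuous because post-jump traits have $\nu$-density $K(\cdot,\cdot)$ and the last jump time has a Lebesgue density, the reinfection intensity being bounded by $\lambda_*$; but the analytic argument above is cleaner and self-contained.)
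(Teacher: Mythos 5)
Your argument is correct and is essentially the paper's proof of Proposition~\ref{AP-density}: you choose the same transported test function $f_s(a,\theta)=\varphi(a+t-s,\theta)$, obtain the same three-term decomposition, and exploit the nonnegativity of the "loss" term (your $\mathrm{(III)}$). The paper phrases the conclusion as $\langle\mu_t,\varphi\rangle\leq\int\varphi\,H_t\,\rd a\,\nu(\rd\theta)$ with $H_t\in L^1(\rd a\otimes\nu)$ and invokes Radon--Nikodym, whereas you specialize directly to $\varphi=\ind_D$ for a null set $D$; these are the same argument in different packaging.
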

\begin{proof}
We denote by $u_0$ the density of $\mu_0$ with respect to $\rd a\nu(\rd \theta)$: $\mu_0(\rd a,\rd\theta)=u_0(a,\theta)\rd a\nu(\rd \theta)$.
Let $\varphi$ be a non-negative test function, i.e. a nonnegative measurable function of class $\cC^1$ with respect to its first variable on $\R_+\times \Theta$. 
We define
\begin{equation*}
	\forall t\in\R_+,\forall s\in[0,t],\forall (a,\theta)\in\R_+\times\Theta,\quad f(s,a,\theta)=\varphi(a-(s-t),\theta).
\end{equation*}
As in the proof of Proposition~\ref{VVM-ex-th} from \eqref{eq:Rjump-term} and \eqref{VVM-eq1},
\begin{equation*}
	\langle\mu _t,\varphi\rangle=\langle\mu _0,\varphi_t\rangle+\int_{0}^{t}\langle\mu _{s},\lambda\rangle\langle\mu _{s},R\varphi_{t-s}\rangle\rd s,
\end{equation*}
where $\varphi_s(a,\theta)=\varphi( a+s,\theta)$. 

Therefore, using the fact that $\varphi$ is non-negative,
\begin{align*}
	\langle\mu _t,\varphi\rangle&\leq\int_{\R_+\times\Theta}\varphi_t(a,\theta)\mu_0(\rd a,\rd\theta)+\int_{0}^{t}\langle\mu _{s},\lambda\rangle\int_{\R_+\times\Theta}\int_{\Theta}\varphi(t-s,\widetilde{\theta})\gamma( a,\theta)K(\theta,\widetilde\theta)\nu(\rd\widetilde\theta)\mu_s(\rd a,\rd\theta)\rd s,\\
	&=\int_t^\infty\int_{\Theta}\varphi(s,\theta)u_0(s-t,\theta)\rd s\nu(\rd\theta)+\int_{0}^{t}\langle\mu _{t-s},\lambda\rangle\int_{\R_+\times\Theta}\int_{\Theta}\varphi(s,\widetilde{\theta})\gamma( a,\theta)K(\theta,\widetilde\theta)\nu(\rd\widetilde\theta)\mu_{t-s}(\rd a,\rd\theta)\rd s,\\
	&=\int_t^\infty\int_{\Theta}\varphi(s,\widetilde\theta)u_0(s-t,\widetilde\theta)\rd s\nu(\rd\theta)+\int_{0}^{t}\int_{\Theta}\varphi(s,\widetilde{\theta})\langle\mu _{t-s},\lambda\rangle\int_{\R_+\times\Theta}\gamma( a,\theta)K(\theta,\widetilde\theta)\mu_{t-s}(\rd a,\rd\theta)\rd s\nu(\rd\widetilde\theta),\\
	&=\int_{\R_+\times\Theta}\varphi(s,\widetilde\theta)H_t(s,\widetilde\theta)\rd s\nu(\rd\widetilde\theta),
\end{align*}
where 
\[H_t(s,\widetilde\theta)=u_0(s-t,\widetilde\theta)\indic{s> t}+\indic{s\leq t}\langle\mu _{t-s},\lambda\rangle\int_{\R_+\times\Theta}\gamma( a,\theta)K(\theta,\widetilde\theta)\mu_{t-s}(\rd a,\rd\theta).\]
It follows by density that for all bounded non-negative measurable function $\varphi$ on $\R_+\times\Theta$, we have
\begin{equation*}
	\langle\mu _t,\varphi\rangle\leq\int_{\R_+\times\Theta}\varphi(s,\widetilde\theta)H_t(s,\widetilde\theta)\rd s\nu(\rd\widetilde\theta).
\end{equation*}  
We conclude by the Radon-Nikodym theorem.
\end{proof}

\section{Operators compactness}\label{A:compact-operators}

The compactness proofs of the operators studied in Section~\ref{sec:local stability} are based on the Riez-Fréchet-Kolmogorov criterion, see e.g. \cite[Theorem~4.26]{brezis2011analyse}. To  do so, we need to assume that $\Theta$ is an open subset of $\dR^d$ and $\nu$ is a probability measure absolutely continuous with respect to the Lebesgue measure with support on $\Theta$.

\begin{lemma}\label{lem:B*_compact}
The operator $\cB_*$ defined in \eqref{eq:def_A*-B*} is a linear compact operator on $\dX=L^1\PAR{\theta,\nu}\times  L^1(\rd a\otimes\nu)$ for the norm $\NRM{.}_\dX$, given by \eqref{eq:norm-X}.
\end{lemma}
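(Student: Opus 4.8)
**Proof plan for Lemma (compactness of $\cB_*$).**

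The plan is to write $\cB_*$ as a sum of finitely many rank-one or low-rank operators (modulo a genuinely infinite-dimensional but compact piece) and verify compactness of each piece separately. Recall from \eqref{eq:def_A*-B*} that, for $(0,\phi)\in\dX_0$,
\[
\cB_*\begin{pmatrix}0\\\phi\end{pmatrix}(a,\theta)=\begin{pmatrix}
\fS_*(\theta)\SCA{\lambda,\phi}+\fF_*\SCA{\gamma K(\cdot,\theta),\phi}\\[0.2cm]
-\fF_*\,\gamma(a,\theta)u_*(a,\theta)\SCA{\lambda,\phi}
\end{pmatrix}.
\]
The $L^1(\rd a\otimes\nu)$-component is literally rank one: $\phi\mapsto -\fF_*\SCA{\lambda,\phi}\,\gamma(\cdot,\cdot)u_*(\cdot,\cdot)$, since $\phi\mapsto\SCA{\lambda,\phi}$ is a bounded linear functional on $L^1(\rd a\otimes\nu)$ (as $0\le\lambda\le\lambda_*$) and $\gamma u_*\in L^1(\rd a\otimes\nu)$ (because $u_*$ is a probability density and $\gamma\le1$). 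Similarly, the first summand of the $L^1(\nu)$-component, $\phi\mapsto\SCA{\lambda,\phi}\,\fS_*$, is rank one, using $\fS_*\in L^1(\nu)$ from Proposition~\ref{VVM-th-F-propre}. So the only nontrivial part is the operator
\[
S:\phi\longmapsto\Big(\theta\mapsto\SCA{\gamma K(\cdot,\theta),\phi}\Big)=\Big(\theta\mapsto\int_{\R_+\times\Theta}\gamma(a,\widetilde\theta)K(\widetilde\theta,\theta)\phi(a,\widetilde\theta)\,\rd a\,\nu(\rd\widetilde\theta)\Big)
\]
from $L^1(\rd a\otimes\nu)$ into $L^1(\nu)$.

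The key step is to show $S$ is compact. First, $S$ is bounded: by Tonelli and Assumption~\ref{Hyp:Kernel},
\[
\NRM{S\phi}_{L^1(\nu)}\le\int_{\R_+\times\Theta}\gamma(a,\widetilde\theta)\Big(\int_\Theta K(\widetilde\theta,\theta)\nu(\rd\theta)\Big)\ABS{\phi(a,\widetilde\theta)}\,\rd a\,\nu(\rd\widetilde\theta)\le\NRM{\phi}_{L^1(\rd a\otimes\nu)}.
\]
For compactness I would factor $S$ through the finite-dimensional map $\phi\mapsto\int_{\R_+\times\Theta}\gamma(a,\widetilde\theta)\phi(a,\widetilde\theta)\,\rd a\,\nu(\rd\widetilde\theta)\in\dR$ — wait, this is not quite right because the weight $K(\widetilde\theta,\theta)$ couples $\widetilde\theta$ and $\theta$. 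Instead, observe that $S$ is an integral operator on $\theta$ whose output depends on $\phi$ only through the single scalar-in-$\widetilde\theta$ function $g(\widetilde\theta):=\int_{\R_+}\gamma(a,\widetilde\theta)\phi(a,\widetilde\theta)\,\rd a$, and $\NRM{g}_{L^1(\nu)}\le\NRM{\phi}_{L^1(\rd a\otimes\nu)}$. Thus $S=\widetilde S\circ P$ where $P:L^1(\rd a\otimes\nu)\to L^1(\nu)$, $P\phi=g$, is bounded, and $\widetilde S:L^1(\nu)\to L^1(\nu)$, $(\widetilde Sg)(\theta)=\int_\Theta K(\widetilde\theta,\theta)g(\widetilde\theta)\nu(\rd\widetilde\theta)$. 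But $\widetilde S$ is exactly the adjoint-type integral operator with kernel $K$; by Assumption~\ref{Hyp:endemicity}-\eqref{Hyp:kernel in L^1}, the function $\theta\mapsto\sup_{\widetilde\theta}K(\widetilde\theta,\theta)$ lies in $L^1(\nu)$, so $\widetilde S$ is of Hille--Tamarkin type, and its square (or a suitable power) is compact by \cite[Theorem~11.9]{jorgens1982linear}, exactly as argued in the proof of Proposition~\ref{VVM-th-F-propre}. Hence $\widetilde S$ itself need not be compact, so I must be more careful: I would instead verify compactness of $\widetilde S$ directly via the Riesz--Fréchet--Kolmogorov criterion \cite[Theorem~4.26]{brezis2011analyse} on $L^1(\nu)$, using the absolute continuity of $\nu$ with respect to Lebesgue measure on the open set $\Theta\subset\dR^d$: one shows equi-integrability of $\{\widetilde Sg:\NRM{g}_{L^1(\nu)}\le1\}$ at infinity (from $K(\cdot,\cdot)\in L^1$ integrability in $\theta$ uniformly) and $L^1$-equicontinuity of translates (approximating $K$ by continuous compactly supported kernels in the $\theta$ variable, uniformly in $\widetilde\theta$, which is possible precisely because $\sup_{\widetilde\theta}K(\widetilde\theta,\cdot)\in L^1(\nu)$).

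The main obstacle I anticipate is the last point: establishing the $L^1$-equicontinuity of $\theta\mapsto\widetilde Sg(\theta)$ uniformly over the unit ball of $g$. The naive estimate $\NRM{\widetilde Sg(\cdot+h)-\widetilde Sg(\cdot)}_{L^1(\nu)}\le\NRM{g}_{L^1(\nu)}\NRM{K(\widetilde\theta,\cdot+h)-K(\widetilde\theta,\cdot)}_{L^1(\nu)}$ requires control of the $\theta$-translation of $K$ that is uniform in $\widetilde\theta$, which is not automatic from $K\in L^1$ alone. The remedy is to approximate: given $\eps>0$, pick $K_\eps\in C_c(\Theta\times\Theta)$ (or at least continuous in $\theta$, bounded, compactly supported, uniformly in $\widetilde\theta$) with $\int_\Theta\sup_{\widetilde\theta}\ABS{K(\widetilde\theta,\theta)-K_\eps(\widetilde\theta,\theta)}\nu(\rd\theta)<\eps$ — this is where Assumption~\ref{Hyp:endemicity}-\eqref{Hyp:kernel in L^1} is essential, since it makes $\sup_{\widetilde\theta}K(\widetilde\theta,\cdot)$ an $L^1(\nu)$-dominating function and dominated convergence/density of $C_c$ in $L^1$ applies. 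Then $\widetilde S=\widetilde S_\eps+(\widetilde S-\widetilde S_\eps)$, with $\NRM{\widetilde S-\widetilde S_\eps}_{\rm op}<\eps$ and $\widetilde S_\eps$ having a uniformly continuous compactly supported kernel, hence $\{\widetilde S_\eps g\}$ is relatively compact by Riesz--Fréchet--Kolmogorov. Letting $\eps\to0$ gives that $\widetilde S$ is a uniform limit of compact operators, hence compact; composing with the bounded maps $P$ and the rank-one maps above, $\cB_*$ is compact on $\dX$. (Alternatively, and perhaps more cleanly, one can quote \cite[Theorem~11.9]{jorgens1982linear} to get $\widetilde S^2$ compact, deduce $\cB_*^2$ is compact modulo lower-order compact terms, and then use that $w_{\rm ess}$ is controlled by a suitable power — but since the statement explicitly asserts $\cB_*$ itself is compact, the approximation argument is the honest route.)
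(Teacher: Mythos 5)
Your proposal is essentially the paper's own argument, with a slightly cleaner structure and a more careful treatment of one step. The paper does not decompose $\cB_*$ into rank-one pieces and a remaining $K$-integral operator $S$; it applies the Riesz--Fréchet--Kolmogorov criterion to the whole image set $\cK=\{\cB_*(0,\phi):\NRM{\phi}_{ L^1(\rd a\otimes\nu)}\le 1\}$ at once, bounding $\NRM{(\cB_*\phi)_{(h,k)}-\cB_*\phi}_\dX$ by translates of the three fixed integrable functions $\fS_*$, $\gamma u_*$, and $\sup_{\widetilde\theta}K(\widetilde\theta,\cdot)$, plus a uniform-integrability-at-infinity estimate using that $u_*$ is a probability density. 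Your decomposition makes visible that two of these three pieces are rank one, which is exactly why their translation estimates are trivial; the third piece is your operator $S$, and there your concern is well placed: the paper's translation estimate reduces to showing
\[
\int_\Theta\sup_{\widetilde\theta\in\Theta}\ABS{K(\widetilde\theta,\theta+k)-K(\widetilde\theta,\theta)}\,\nu(\rd\theta)\xrightarrow[k\to 0]{}0,
\]
which the paper justifies by citing the scalar translation lemma \cite[Lemma~4.3]{brezis2011analyse} for $\theta\mapsto\sup_{\widetilde\theta}K(\widetilde\theta,\theta)$. As you note, that quantity is not literally $\NRM{g_k-g}_{L^1(\nu)}$ for $g=\sup_{\widetilde\theta}K(\widetilde\theta,\cdot)$ (the supremum of a difference is not the difference of suprema), so the citation is terse; the honest justification is exactly the approximation you spell out, namely replacing $K$ by a kernel $K_\eps$ that is continuous and compactly supported in $\theta$, with $\int_\Theta\sup_{\widetilde\theta}\ABS{K-K_\eps}\,\nu(\rd\theta)<\eps$, after which the translation estimate follows by uniform continuity of $K_\eps$ and the $\eps$-tail from Assumption~\ref{Hyp:endemicity}-\eqref{Hyp:kernel in L^1}. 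This is the same mechanism as the Brezis lemma's proof; the paper simply does not write it out. You are also right to reject the route via \cite[Theorem~11.9]{jorgens1982linear}: as in Proposition~\ref{VVM-th-F-propre}, it would only give compactness of a power of the $K$-operator, which is not what Lemma~\ref{lem:B*_compact} asserts, even though a power would suffice for the downstream essential-spectral-radius argument. In short, your plan is correct, matches the paper's strategy, and actually tightens the one step the paper leaves implicit.
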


\begin{proof}
Let us recall that $\cB_*$ is defined for $\phi\in  L^1(\rd a\otimes\nu)$ and $(a,\theta)\in \dR_+\times \Theta$ by
\[
\cB_*\begin{pmatrix}
0\\
\phi
\end{pmatrix}
(a,\theta)=\begin{pmatrix}
\fS_*(\theta)\langle\lambda,\phi\rangle+\SCA{\gamma K(\cdot,\theta), \phi}\fF_*\\-\gamma(a,\theta)u_*(a,\theta)\langle\lambda,\phi\rangle
\end{pmatrix}.
\]
For a function $f$ defined on $\dR_+\times \Theta$, and $(h,k)\in\dR\times \dR^d$, we define $f_{(h,k)}(a,\theta)=f(a+h,\theta+k)$, if $(a+h,\theta+k)\in\dR_+\times \dR^d$, and $f_{(h,k)}(a,\theta)=0$ otherwise. 
By Assumptions~\ref{Hyp:endemicity}-\eqref{Hyp:infectivity and susceptibility}, the fact that $u_*$ is a density with respect to $\rd a\otimes\nu$ on $\dR_+\times \Theta$, and $\fF_*\leq \lambda_*$ (see Equation~\eqref{VVM-eq-st}), we easily observe that for $\phi\in   L^1(\rd a\otimes\nu)$ with $\NRM{\phi}_{ L^1(\rd a\otimes\nu)}\leq 1$,
\begin{multline*}
\NRM{{\cB_*}_{(h,k)}\begin{pmatrix}
0\\
\phi
\end{pmatrix}
-\cB_*\begin{pmatrix}
0\\
\phi
\end{pmatrix}}_\dX
\leq
\lambda_*\NRM{{\fS_*}_k-\fS_*}_{L^1(\nu)}+\lambda_*\NRM{\PAR{\gamma u_*}_{(h,k)}-\gamma u_*}_{ L^1(\rd a\otimes\nu)}
\\
+\lambda_*\int_{\dR_+\times\Theta^2}\ABS{\phi(a,\widetilde\theta)}\ABS{K(\widetilde\theta, \theta+k)-K(\widetilde\theta,\theta)}\rd a\nu(\rd\widetilde\theta)\nu(\rd\theta)\\
\leq
\lambda_*\NRM{{\fS_*}_k-\fS_*}_{L^1(\nu)}+\lambda_*\NRM{\PAR{\gamma u_*}_{(h,k)}-\gamma u_*}_{ L^1(\rd a\otimes\nu)}
\\
+\lambda_*\int_{\Theta}\sup_{\widetilde\theta\in\Theta}\ABS{K(\widetilde\theta, \theta+k)-K(\widetilde\theta,\theta)}\nu(\rd\theta).
\end{multline*}
By \cite[Lemma 4.3, p.114]{brezis2011analyse}, for $N\geq 1$, if $f\in L^1(\dR^N)$ (endowed with the Lebesgue measure), then 
\begin{equation}\label{eq:conv-L1-brezis}
    \lim_{h\to 0}\NRM{f_{h}-f}_{L^1(\dR^N)}=0.
\end{equation}
As $\nu$ is absolutely continuous with respect to the Lebesgue measure on $\dR^d$, as $\fS_*$, $\gamma u_*$, and by Assumption~\ref{Hyp:endemicity}-\eqref{Hyp:kernel in L^1}, $\displaystyle{\theta\mapsto \sup_{\widetilde\theta\in\Theta}K(\widetilde\theta,\theta)}$  are integrable functions, we obtain by \eqref{eq:conv-L1-brezis} the convergence of their related $L^1$-norm holds.

We now  introduce
\[
\cK=\BRA{\cB_*\begin{pmatrix}
0\\
\phi
\end{pmatrix}: \phi\in  L^1(\rd a\otimes\nu) \text{ with }\NRM{\phi}_{ L^1(\rd a\otimes\nu)}\leq 1},
\]
and, for a measurable set $\Omega\subset \dR_+\times\Theta$, $\cK_{|\Omega}$ denotes the set of the restrictions to $\Omega$ of the functions of $\cK$.
By the Riez-Fréchet-Kolmogorov criterion \cite[Theorem~4.26]{brezis2011analyse}, we have proved that the closure of the set $\cK_{|\Omega}$ is compact for any measurable set $\Omega$ with finite measure.

Besides, we easily observe that $\cK$ is a bounded set of $\dX$. By Assumptions~\ref{Hyp:Kernel} and \ref{Hyp:endemicity}-\eqref{Hyp:infectivity and susceptibility}, we have for $\phi\in  L^1(\rd a\otimes\nu)$ and $r>0$,
\[
\int_r^\infty\int_\Theta\gamma(a,\theta)u_*(a,\theta)\SCA{\lambda, \phi}\rd a\nu(\rd\theta)\leq \lambda_*\NRM{\phi}_{ L^1(\rd a\otimes\nu)}\int_r^\infty\int_\Theta u_*(a,\theta)\rd a\nu(\rd\theta).
\]
Thus, $\displaystyle{\lim_{r\to \infty}\int_r^\infty\int_\Theta\gamma(a,\theta)u_*(a,\theta)\SCA{\lambda, \phi}\rd a\nu(\rd\theta)=0}$ uniformly on $\phi$ with $\NRM{\phi}_{ L^1(\rd a\otimes\nu)}\leq 1$,
since $u_*$ is a density on $\dR_+\times \Theta$ with respect to the measure $\rd a\otimes \nu$.
Consequently, by \cite[Corollary 4.27]{brezis2011analyse}, we deduce that $\cK$ has compact closure in $\dX$. This implies that $\cB_*$ is a compact operator on $\dX$.

\end{proof}

\begin{lemma}\label{eq-compact-T-2}
For all $t\geq0,\,T_*^2(t)$ is a compact operator on $L^1(\dR_+\times \Theta,\rd a\otimes\nu)$.
\end{lemma}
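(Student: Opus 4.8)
The plan is to show that $T_*^2(t)$ maps the unit ball of $L^1(\dR_+\times\Theta,\rd a\otimes\nu)$ into a relatively compact set, again via the Riesz--Fréchet--Kolmogorov criterion \cite[Theorem~4.26, Corollary~4.27]{brezis2011analyse}, exactly as in the proof of Lemma~\ref{lem:B*_compact}. Recall from \eqref{eq:T_* for a>t-ess} that, for $\phi\in L^1(\rd a\otimes\nu)$,
\[
T_*^2(t)(\phi)(a,\theta)=\ind_{a\geq t}\int_0^t\gamma(a-t+s,\theta)u_*(a-t+s,\theta)\,\SCA{\lambda,T_*(s)(\phi)}\,\exp\left(-\fF_*\int_s^t\gamma(a-t+r,\theta)\,\rd r\right)\rd s .
\]
The first step is to record the basic bounds: since $0\leq\gamma\leq 1$, the exponential factor is in $[0,1]$, and since $u_*$ is a probability density and $\NRM{T_*(s)}_{\rm op}\leq \re^{\fF_*\sigma a_*}$ by \eqref{eq:NormI2-1-ess} (or any crude bound deduced from Remark~\ref{rk:borne de F et S}), one has $|\SCA{\lambda,T_*(s)(\phi)}|\leq C\NRM{\phi}_{L^1(\rd a\otimes\nu)}$ for a constant $C=C(t)$ depending only on $t$, $\lambda_*$, $\sigma$, $a_*$. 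Hence $T_*^2(t)$ is a bounded operator, and moreover the key observation is that the $\phi$-dependence factors through the single scalar quantity $\SCA{\lambda,T_*(s)(\phi)}$, $s\in[0,t]$; writing $c_\phi(s):=\SCA{\lambda,T_*(s)(\phi)}$, the image $T_*^2(t)(\phi)$ is a ``superposition'' over $s\in[0,t]$ of the fixed kernel $g_s(a,\theta):=\ind_{a\geq t}\gamma(a-t+s,\theta)u_*(a-t+s,\theta)\exp(-\fF_*\int_s^t\gamma(a-t+r,\theta)\,\rd r)$.

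The second step is the equicontinuity of translations. For $(h,k)\in\dR\times\dR^d$ one estimates, using the factorization above and Fubini,
\[
\NRM{(T_*^2(t)\phi)_{(h,k)}-T_*^2(t)\phi}_{L^1(\rd a\otimes\nu)}\leq C\NRM{\phi}_{L^1(\rd a\otimes\nu)}\int_0^t\NRM{(g_s)_{(h,k)}-g_s}_{L^1(\rd a\otimes\nu)}\,\rd s .
\]
Each $g_s$ is a product of bounded functions times the $L^1$ density $u_*(\cdot-t+s,\cdot)$, hence $g_s\in L^1(\rd a\otimes\nu)$; since $\nu$ is absolutely continuous with respect to Lebesgue measure on $\dR^d$, $g_s\in L^1$ with respect to Lebesgue measure on $\dR_+\times\dR^d$ (after extending by $0$), so $\NRM{(g_s)_{(h,k)}-g_s}_{L^1}\to 0$ as $(h,k)\to 0$ by \cite[Lemma~4.3]{brezis2011analyse}, with a dominating bound $\leq 2\NRM{g_s}_{L^1}\leq 2$ that is integrable in $s$ over $[0,t]$. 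Dominated convergence then gives that the right-hand side tends to $0$ uniformly over $\NRM{\phi}_{L^1}\leq 1$. The third step is tightness at infinity: from $g_s(a,\theta)\leq u_*(a-t+s,\theta)$ and a change of variables,
\[
\int_r^\infty\int_\Theta |T_*^2(t)\phi(a,\theta)|\,\rd a\,\nu(\rd\theta)\leq C\NRM{\phi}_{L^1(\rd a\otimes\nu)}\int_0^t\int_{r-t+s}^\infty\int_\Theta u_*(b,\theta)\,\rd b\,\nu(\rd\theta)\,\rd s\xrightarrow[r\to\infty]{}0,
\]
uniformly in $\phi$ with $\NRM{\phi}_{L^1}\leq 1$, since $u_*$ is a probability density. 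Combining the two steps with \cite[Theorem~4.26 and Corollary~4.27]{brezis2011analyse} shows that $\{T_*^2(t)\phi:\NRM{\phi}_{L^1(\rd a\otimes\nu)}\leq 1\}$ is relatively compact in $L^1(\dR_+\times\Theta,\rd a\otimes\nu)$, i.e.\ $T_*^2(t)$ is compact.

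The main obstacle, and the point requiring care, is the equicontinuity estimate on the translated kernels $g_s$ uniformly usable in the $s$-integral: one must check that $s\mapsto\NRM{(g_s)_{(h,k)}-g_s}_{L^1(\rd a\otimes\nu)}$ is measurable and dominated so that dominated convergence applies, and that the translation of the indicator $\ind_{a\geq t}$ together with the nested integral $\int_s^t\gamma(a-t+r,\theta)\,\rd r$ inside the exponential does not spoil $L^1$-continuity --- this is handled by writing $g_s = \ind_{a\ge t}\,\tilde g_s$ with $\tilde g_s$ of the form (bounded factor)$\times$($L^1$ density), noting that multiplication by a bounded function and the shift in the argument of a fixed $L^1$ function are both $L^1$-continuous, and that the boundedness of all factors by $1$ provides the required uniform domination. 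The rest is routine bookkeeping with Fubini and changes of variables analogous to those already performed in \eqref{eq:NormI2-1-ess}.
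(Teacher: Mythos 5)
Your proof is correct and takes essentially the same route as the paper: both verify the Riesz--Fréchet--Kolmogorov criterion. You apply it directly to $T_*^2(t)$ by factoring through the scalar $c_\phi(s)=\SCA{\lambda,T_*(s)(\phi)}$ and the fixed kernels $g_s$, whereas the paper first passes to a simpler dominating operator $G(t)$ (dropping the factors $\gamma$ and the exponential) and checks R--F--K for $G$; your direct treatment is a bit cleaner and avoids that reduction, but the underlying estimates are the same.

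One step is miscited. You justify $\NRM{T_*(s)}_{\rm op}\leq \re^{\fF_*\sigma a_*}$ ``by \eqref{eq:NormI2-1-ess}'', but that display estimates only $T_*^1$, not $T_*=T_*^1-T_*^2$, and since $T_*^2$ is itself defined through $T_*$, a bound on $T_*^1$ alone does not control $\NRM{T_*(s)}_{\rm op}$. Remark~\ref{rk:borne de F et S} does not help either; it concerns $\fF$ and $\fS$ for the nonlinear system, not the linearized semigroup. The correct bound comes from Gronwall applied to \eqref{eq:T_* for a>t-ess}: $\NRM{T_*(s)(\phi)}_{L^1(\rd a\otimes\nu)}\leq\NRM{\phi}_{L^1(\rd a\otimes\nu)}+\lambda_*\int_0^s\NRM{T_*(r)(\phi)}_{L^1(\rd a\otimes\nu)}\rd r$, giving $\NRM{T_*(s)}_{\rm op}\leq \re^{\lambda_* s}$, hence $|c_\phi(s)|\leq\lambda_*\re^{\lambda_* t}\NRM{\phi}_{L^1(\rd a\otimes\nu)}$ on $[0,t]$. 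With that fixed, the remaining steps are sound. Note also that your closing ``main obstacle'' paragraph is more elaborate than needed: there is no need to peel off the indicator or appeal to continuity of multiplication by a bounded factor. Since $0\leq g_s\leq\ind_{a\geq t}\,u_*(a-t+s,\theta)$, each $g_s$ is already in $L^1(\rd a\otimes\nu)$, and $\NRM{(g_s)_{(h,k)}-g_s}_{L^1(\rd a\otimes\nu)}\to 0$ follows directly from \cite[Lemma~4.3]{brezis2011analyse}, with the uniform domination $\leq 2\NRM{g_s}_{L^1(\rd a\otimes\nu)}\leq 2$ handling the $s$-integral; this is exactly how the paper treats its kernel $v(t,s)$.
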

    \begin{proof}
Let $t\geq 0$. For $\phi\in L^1(\dR_+\times \Theta,\rd a\otimes\nu)$, we easily note that
\[
\ABS{T_*^2(t)\left(\phi\right)(a,\theta)}\leq 
\mathds{1}_{a\geq t}\ABS{\int_{0}^{t}u_*(a-t+s,\theta)\langle \lambda,T_*(s)(\phi)\rangle\rd s}.
\]
So, let us introduce the operator $G$ defined on $ L^1(\dR_+\times \Theta,\rd a\otimes\nu)$ by
\[G(t)(\phi)(a,\theta)=\mathds{1}_{a\geq t}\int_{0}^{t}u_*(a-t+s,\theta)\langle \lambda,T_*(s)(\phi)\rangle \rd s.\]

It suffices to prove $G(t)$ is a compact operator to deduce that $T_*^2(t)$ is also a compact operator, because $L^1(\dR_+\times \Theta,\rd a\otimes\nu)$ is a Banach space. 

From \eqref{eq:T_* for a>t-ess}, we remark that
\begin{align*}
    \|T_*(t)(\phi)\|_1&\leq\NRM{\phi}_{ L^1(\rd a\otimes\nu)}+\lambda_*\int_0^t\|T_*(s)(\phi)\|_1 \rd s\\
    &\leq \NRM{\phi}_{ L^1(\rd a\otimes\nu)} \re^{\lambda_* t},
\end{align*}
since $\ABS{\langle \lambda,T_*(t)(\phi)\rangle}\leq\lambda_*\|T_*(t)(\phi)\|_1,\,\gamma\leq1$, $u_*$ is a probability density on $\dR_+\times \Theta$, and
 where the last inequality is a consequence of Gronwall's inequality. Using the same notations as in the proof of Lemma~\ref{lem:B*_compact}, it follows that  for all $(h,k)\in\R_+\times\Theta$,  $\phi\in   L^1(\rd a\otimes\nu)$ with $\NRM{\phi}_{ L^1(\rd a\otimes\nu)}\leq 1$, we have $\ABS{\langle \lambda,T_*(t)(\phi)\rangle}\leq\lambda_*\re^{\lambda_* t}$ and 
\begin{align*}
    \NRM{G(t)(\phi)_{(h,k)}-G(t)(\phi)}_{ L^1(\rd a\otimes\nu)}\leq\lambda_*^2\re^{\lambda_* t}\int_0^t\NRM{v_{h,k}(t,s)-v(t,s)}_{ L^1(\rd a\otimes\nu)}\rd s,
\end{align*}
where $v(t,s)(a,\theta)=\ind_{a\geq t}u_*(a-t+s,\theta)$.
Since $u_*$ is a probability density on $\dR_+\times \Theta$, from \cite[Lemma~4.3]{brezis2011analyse}, for any $0\leq s\leq t$, we have
\begin{equation*}
    \NRM{v_{h,k}(t,s)-v(t,s)}_{ L^1(\rd a\otimes\nu)}\underset{(h,k)\to(0,0)}{\longrightarrow}0,
\end{equation*}
and then by the dominated convergence theorem, we deduce
\begin{equation}\label{Riez-1}
   \sup_{\phi:\NRM{\phi}_{ L^1(\rd a\otimes\nu)}\leq 1}\NRM{G(t)(\phi)_{(h,k)}-G(t)(\phi)}_{ L^1(\rd a\otimes\nu)}\underset{(h,k)\to(0,0)}{\longrightarrow}0.
\end{equation}
Moreover, we have
\begin{align*}
\int_{\R_+\times\Theta}\left|G(\phi)(a,\theta)\right|\rd a\nu(\rd\theta)&\leq\int_0^t\left|\langle\lambda, T_*(s)(\phi)\rangle\right|\int_{\Theta}\int_{t}^{\infty}u_*(a-t+s,\theta)\rd a\nu(\rd\theta)\rd s\\
&=\int_0^t\left|\langle \lambda, T_*(s)(\phi)\rangle\right|\int_{\Theta}\int_{s}^{\infty}u_*(a,\theta)\rd a\nu(\rd\theta)\rd s\\
&\leq\lambda_*\|\phi\|_{L^1(\rd a\otimes\nu)}te^{\lambda_* t},
\end{align*}
which implies that
\begin{equation}\label{Riez-2}
    \sup_{\phi:\NRM{\phi}_{ L^1(\rd a\otimes\nu)}\leq 1}\int_r^\infty\int_\Theta|G(t)(\phi)(a,\theta)|\rd a\nu(\rd\theta)\underset{r\to \infty}{\longrightarrow}0.
\end{equation}

Hence from \eqref{Riez-1} and \eqref{Riez-2}, we conclude as in the proof of Lemma~\ref{lem:B*_compact}, by using Riez-Fréchet-Kolmogorov criterion in $L^1(\rd a\otimes\nu)$ and \cite[Corollary~4.27]{brezis2011analyse}, that $G(t)$ is a compact operator and thus $T^2_*(t)$.

\end{proof}

\bibliographystyle{abbrv}
\bibliography{biblio-article}

\end{document}